\newtheorem{theorem}{Theorem}[section]
\newtheorem{lemma}[theorem]{Lemma}
\newtheorem{proposition}[theorem]{Proposition}
\newtheorem{corollary}[theorem]{Corollary}
\theoremstyle{definition}
\theoremstyle{remark}
\newtheorem{remark}[theorem]{Remark}
\newcommand{\C}{\mathbb{C}}
\newcommand{\Sp}{\mathbb{S}}
\newcommand{\Q}{\mathbb{Q}}
\newcommand{\Z}{\mathbb{Z}}
\newcommand{\N}{\mathbb{N}}
\newcommand{\SU}{\operatorname{SU}}
\newcommand{\SL}{\operatorname{SL}}
\newcommand{\lcm}{\operatorname{lcm}}
\newcommand{\co}{\colon\thinspace}
\newcommand{\bs}{\boldsymbol}
\newcommand{\Hilb}{\operatorname{Hilb}}
\begin{document}

\title{The Hilbert series of $\SL_2$-invariants}

\author[P.~de Carvalho Cayres Pinto]{Pedro de Carvalho Cayres Pinto}
\address{PEE/COPPE, Universidade Federal do Rio de Janeiro,
Av. Athos da Silveira Ramos 149, Centro de Tecnologia - Bloco H, Caixa postal 68504, CEP: 21941-972, Rio de Janeiro, Brazil}
\email{pedrocayres@poli.ufrj.br}

\author[H.-C.~Herbig]{Hans-Christian Herbig}
\address{Departamento de Matem\'{a}tica Aplicada,
Av. Athos da Silveira Ramos 149, Centro de Tecnologia - Bloco C, CEP: 21941-909 - Rio de Janeiro, Brazil}
\email{herbighc@gmail.com}

\author[D.~Herden]{Daniel Herden}
\address{Department of Mathematics, Baylor University,
One Bear Place \#97328,
Waco, TX 76798-7328, USA}
\email{Daniel\_Herden@baylor.edu}

\author[C.~Seaton]{Christopher Seaton}
\address{Department of Mathematics and Computer Science,
Rhodes College, 2000 N. Parkway, Memphis, TN 38112}
\email{seatonc@rhodes.edu}

\thanks{C.S., D.H., and H.-C.H. were supported by a Collaborate@ICERM grant from the Institute for Computational
and Experimental Research in Mathematics (ICERM). C.S. was supported by the E.C.~Ellett Professorship in Mathematics
and the Instituto de Matem\'{a}tica Pura e Aplicada (IMPA);  H.-C.H. was supported by CNPq through the
\emph{Plataforma Integrada Carlos Chagas.}}

\keywords{Hilbert series, special linear group, $a$-invariant, Schur polynomial}
\subjclass[2010]{Primary 13A50; Secondary 13H10, 05E05.}

\begin{abstract}
Let $V$ be a finite dimensional representations of the group $\SL_2$ of $2\times 2$ matrices with complex
coefficients and determinant one. Let $R=\C[V]^{\SL_2}$ be the algebra of $\SL_2$-invariant polynomials
on $V$. We present a calculation of the Hilbert series $\Hilb_R(t)=\sum_{n\ge 0}\dim (R_n)\: t^n$ as well
as formulas for the first four coefficients of the Laurent expansion of $\Hilb_R(t)$ at $t=1$.
\end{abstract}

\maketitle
\tableofcontents


\section{Introduction}
\label{sec:Intro}

Let $V$ be a finite dimensional representation of $\SL_2$. It is well-known that $V$ is isomorphic to a sum of
irreducible representations $\bigoplus_{k=1}^r V_{d_k}$. Here, $V_{d_k}$ stands for the $(d_k+1)$-dimensional
irreducible  representation of $\SL_2$ which is given by binary forms of degree $d_k\in \N$. In the decomposition $\bigoplus_{k=1}^r V_{d_k}$ it is not assumed that  the $d_k$ are pairwise distinct. The algebra of polynomial
$\SL_2$-invariants $R:=\C[V]^{\SL_2}$ is a finitely generated $\C$-algebra and carries a natural $\N$-grading
$R=\bigoplus_{n\ge 0} R_n$. In fact it is generated by a complete system of homogeneous invariants which obey some
homogeneous relations. For a more detailed discussion, the reader may consult \cite{DerskenKemperBook,PopovVinberg}.

In this paper we study the \emph{Hilbert series} $\Hilb_R(t)$ of $R$, i.e. the generating function that counts the dimensions of the homogeneous components $R_n$:
\begin{align*}
\Hilb_R(t)= \sum_{n=0}^\infty \dim(R_n)\: t^n.
\end{align*}
It is a classical result that $\Hilb_R(t)$ is rational. The degree of $\Hilb_R(t)$, i.e. the difference of the degree of the numerator and the degree of the denominator, is referred to as the \emph{$a$-invariant} $a(R)$ of $R$, see
\cite[Definitions 3.6.13 and 4.4.4]{BrunsHerzog} and \cite[Section 3]{GotoWatanabe}. The $a$-invariant of invariant
rings has been studied for example in \cite{CowieHerbigSeatonHerden,Knop,KnopLittelmann,PopovBook}; note that some references use $q$ to denote the negative $a$-invariant.

It is well-known that $\Hilb_R(t)$ has a pole at $t = 1$ of order equal the Krull dimension $\dim R$.
We use the notation $\gamma_m$, $m\ge 0$, to denote the coefficients  in the Laurent expansion
\begin{equation}
\label{eq:DefGammas}
    \Hilb_R(t) = \sum\limits_{m=0}^\infty \frac{\gamma_m}{(1 - t)^{\dim R-m}}.
\end{equation}
Some authors systematically use the notation $\gamma$ or $\deg(R)$ for $\gamma_0$ and $\tau$ or $\psi(R)$ for $\gamma_1$.
The coefficients $\gamma_0$ and $\gamma_1$ have clear interpretations in
the case of invariants of a finite group, see \cite[Lemma 2.4.4]{SturmfelsBook} as well as \cite[Section 3.13]{BensonBook}
or \cite{BensonCrawleyBoevey}, and their meaning in more general contexts has been investigated, e.g. in
\cite{AvramovBuchweitzSally} and \cite[Chapter~3]{PopovBook}.
For the case of invariants of $\SL_2$, David Hilbert \cite{Hilbert} published in 1893 a formula for $\gamma_0$ in the
case that $V=V_d$ is irreducible for $d\ge 5$:
\begin{equation} \label{eq:Hilbert}
    \gamma_0
        =   \frac{-1}{(3-(-1)^d)\:d!}\sum_{n=0}^{\lfloor d/2\rfloor}{d\choose n}
                \left(\frac{d}{2} - n \right)^{d-3}\quad.
\end{equation}
Since then, computations of $\Hilb_R(t)$ and $\gamma_0$ have been taken up by numerous authors, e.g.
\cite{BedratyukPoincareCovariants,BedratyukBivarPoincare,BedratyukSL2Invariants,BedratyukSL2MiltigradPoincare,
BedratyukBedratyukMultivarPoincare, BedratyukIlashCovariants,IlashPoincareNLinForm,
IlashPoincareNQuadForm,LittelmannProcesi,Springer}.

The main result of this paper is  Theorem~\ref{thrm:Main}, presenting formulas for $\gamma_0$, $\gamma_1$, $\gamma_2$,
and $\gamma_3$ that generalize Hilbert's formula in Equation~\eqref{eq:Hilbert} to encompass all finite-dimensional
representations of $\SL_2$ up to a set of low dimensional exceptions that can easily be computed directly.
As a corollary, we reproduce the computation
of the $a$-invariant given by F. Knop and P. Littelmann \cite{KnopLittelmann}. Note that
given our computation of $\gamma_0$, Knop and Littelmann's computation of the $a$-invariant
renders our computation of $\gamma_1$ superfluous, see
Remark~\ref{rem:KnopLittelmanGam1Superfluous}; we include this computation for readability towards
the computation of $\gamma_2$
and as an alternate derivation of the $a$-invariant. On the way, we produce
formulas in Proposition~\ref{prop:MultivarHilbSer} and Theorem~\ref{thrm:UnivarHilbSer} for
the multivariate and univariate Hilbert series of $\C[V]^{\SL_2}$, respectively, that in
particular indicate an algorithm for computing the Hilbert
series. This algorithm is described in Section~\ref{sec:Algorithm}; it has been implemented using \emph{Mathematica}
\cite{Mathematica} and is available from the authors by request.
Note that formulas for the multivariate Hilbert series have been given previously
by Brion \cite[Th\'{e}or\`{e}me 1]{Brion} and Bedratyuk--Bedratyuk in
\cite[Theorem 3]{BedratyukBedratyukMultivarPoincare}.

This paper is the third in a series that uses the methods described in
\cite[Section~4.6.1 and 4.6.4]{DerskenKemperBook} to systematically compute Hilbert series
of rings of invariants and give explicit, general descriptions for the first few Laurent coefficients $\gamma_m$.
The techniques were first laid out in \cite{HerbigSeaton}, where the Hilbert series of algebras of
regular functions on linear symplectic circle quotients were investigated. As explained in that reference,
that computation is equivalent to the computation corresponding to the invariant ring of a \emph{cotangent-lifted} representation of the circle, and the extension of these techniques to arbitrary circle representations was recently
presented in \cite{CowieHerbigSeatonHerden}. The key observation is related to weights of the Cartan torus that occur
with multiplicity in the representation, which in the case considered here occurs whenever $V$ contains two irreducible
representations $V_{d_k}$ whose dimensions have the same parity (and hence must occur whenever $r > 2$, i.e. $V$ has more
than two irreducible summands). Though these
degeneracies impose difficulties in the computation of the Hilbert series, they can be circumvented by taking
advantage of certain analytic continuations, viewing some instances of the integer weights as real parameters and
perturbing them to avoid degeneracies. This in particular can be used to show that the bare expressions
for the $\gamma_m$ in terms of the weights have removable singularities along the diagonals. After removing these
singularities, the resulting expressions can be expressed in terms of Schur polynomials, yielding succinct,
general expressions for the first four $\gamma_m$.
The potential usefulness of our technique to representations of reductive Lie groups of higher rank is currently
being explored, in particular for the case of a torus of dimension $\ell>1$. In a forthcoming paper, we will present
the computation of the first Laurent coefficient of the Hilbert series of the algebra of real regular functions on
the symplectic quotient of a unitary $\SU_2$-module; this computation will require the formulas for $\gamma_0$,
$\gamma_1$, and $\gamma_2$ presented here.

We briefly describe the notation required to state Theorem~\ref{thrm:Main}, which is adopted and explained in more detail
throughout the rest of the paper.
For $V = \bigoplus_{k=1}^r V_{d_k}$, let $D = \dim V$. Let
$\Lambda = \{(k,i)\in \Z\times\Z : 1 \leq k \leq r, \; \lfloor d_k/2 \rfloor + 1 \leq  i \leq d_k \}$ with $C:=|\Lambda|$,
and for each $(k,i)\in\Lambda$, let $a_{k,i} := 2i - d_k$. Let $\bs{a} := \big(a_{k,i} : (k,i)\in\Lambda\big)$,
and let $\sigma_V = 2$ if each $d_k$ is even and $1$ otherwise. Finally, for an integer partition
$\rho = (\rho_1,\ldots,\rho_n) \in \Z^n$ with $\rho_1\geq\rho_2\geq\cdots\rho_n\geq 0$, let
$s_\rho(\bs{x})$ denote the corresponding Schur polynomial in the variables $\bs{x}=(x_1,\ldots,x_n)$
(details can be found at the beginning of Section \ref{sec:LaurentSchur}).
We then have the following.

\begin{theorem}
\label{thrm:Main}
Let $V = \bigoplus_{k=1}^r V_{d_k}$ with $d_1 \le \cdots \le d_r$ be an $\SL_2$-representation with $V^{\SL_2} = \{0\}$,
and assume $V$ is not isomorphic to $2V_1$, nor $V_d$ for $d \leq 4$.
The degree $3-D$ coefficient $\gamma_0$ of the Laurent series of $\Hilb_V(t)$ is given by
\begin{equation}
\label{eq:Gamma0}
    \gamma_0
    = \sigma_V\,
    \frac{ s_{\rho}(\bs{a}) }
        {s_{\delta}(\bs{a})}
\end{equation}
where $\rho = (C-3,C-3,C-3,C-4,\ldots,1,0)$ and $\delta = (C-1,C-2,\ldots,1,0)$.
The degree $4-D$ coefficient $\gamma_1$ of the Laurent series is given by $3\gamma_0/2$.
Hence the $a$-invariant $-(D-3) - 2\gamma_1/\gamma_0$ of $\C[V]^{\SL_2}$ is equal to $-D$.

If $V$ is not isomorphic to $V_d$ for $d=1,2,3,4,5,6,8$, $2V_1$, $V_1+V_2$, $V_1+V_3$, $V_1+V_4$,
$2V_2$, $V_2+V_3$, $V_2+V_4$, $2V_3$, nor $2V_4$, then the degree $5-D$ coefficient $\gamma_2$
of the Laurent series is given as follows. If all $d_k$ are even, at least two $d_k$ are odd, or at least one odd $d_k > 1$, then
\begin{equation}
\label{eq:Gamma2Case1}
    \gamma_2
    =
    \sigma_V\,
    \frac{42 s_{\rho}(\bs{a}) + s_{\rho^\prime}(\bs{a})
            \big( P_2(\bs{a})  - 8\big) }
                {24 s_\delta(\bs{a})}
\end{equation}
where $P_2(\bs{a})$ denotes the power sum of degree $2$ and $\rho^\prime = (C-3, C-4, C-4, C-4, C-5,\ldots,1,0)$.
If $d_1 = 1$ and all other $d_k$ are even, then
\begin{align}
\label{eq:Gamma2Case2}
    \gamma_2
    &=
    \frac{42 s_{\rho}(\bs{a}) + s_{\rho^\prime}(\bs{a})
            \big( P_2(\bs{a})  - 8\big) }
                {24 s_\delta(\bs{a})}
        + \frac{s_{C - 4, C - 4, C - 4, C - 5, \ldots, 1, 0}(\bs{a_1}) }
            {4 s_{C - 2, C - 3, C - 4 \ldots, 1, 0}(\bs{a_1}) }
\end{align}
where $\bs{a}_1$ denotes $\bs{a}$ with the entry $a_{1,1}$ removed.

If $V$ is not isomorphic to $2V_1$, nor $V_d$ for $d \leq 4$, then
the degree $6-D$ coefficient $\gamma_3$ of the Laurent series is given by
\begin{equation}
\label{eq:Gamma3}
    \gamma_3 = \frac{5(\gamma_2 - \gamma_0)}{2}.
\end{equation}
\end{theorem}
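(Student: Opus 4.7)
The plan is to start from the Molien--Weyl integral representation which, after applying Weyl integration on the Cartan torus of $\SL_2$, expresses
\begin{equation*}
\Hilb_R(t) = \frac{1}{2}\cdot\frac{1}{2\pi i}\oint_{|z|=1} \frac{(1-z^2)(1-z^{-2})}{\prod_{k=1}^r\prod_{i=0}^{d_k}(1 - tz^{d_k - 2i})}\,\frac{dz}{z},
\end{equation*}
and then exploit Theorem~\ref{thrm:UnivarHilbSer}, which evaluates this integral as a closed-form sum of rational functions in $t$ obtained by collecting residues at the poles $z^{a_{k,i}} = t$ inside the unit circle. Writing $t = 1 - u$ and Laurent-expanding each summand, the pole of order $D - 3 = \dim R$ at $u = 0$ comes from the confluence of these residues toward $z = 1$ (and, when every $d_k$ is even, also $z = -1$); the coefficient $\gamma_m$ of $u^{m-(D-3)}$ in the resulting Laurent series is what I seek.

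The central technical obstacle is that when two weights coincide, $a_{k,i} = a_{k',i'}$---which occurs whenever $V$ contains two irreducible summands $V_{d_k}, V_{d_{k'}}$ of the same parity, and hence always when $r \geq 3$---individual summands develop poles in the parameters $\bs{a}$ that cancel only in the total. To handle this, I would follow the perturbation/analytic-continuation technique of \cite{HerbigSeaton} and \cite{CowieHerbigSeatonHerden}: temporarily regard the $a_{k,i}$ as generic real parameters, perform the Laurent expansion term by term, and verify that the apparent singularities along the diagonals $a_{k,i} = a_{k',i'}$ of the resulting symmetric rational functions are removable, so the true value is recovered as a limit. The factor $\sigma_V$ in \eqref{eq:Gamma0} records that an identical contribution arises at $z = -1$ precisely when every $d_k$ is even, since any odd weight $a_{k,i}$ makes $(1 - tz^{a_{k,i}})$ non-singular at $z = -1$.

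Once the diagonal singularities are removed, $\gamma_m \cdot s_\delta(\bs{a})$ is symmetric and then antisymmetric in the $a_{k,i}$ after multiplication by suitable factors, so the bialternant formula identifies the cleaned-up numerator as a Schur polynomial $s_\rho(\bs{a})$, with the partition $\rho$ determined by which monomials appear in the joint Taylor expansion of the integrand. Writing $z = e^x$ near $z = 1$, one has $(1-z^2)(1-z^{-2}) = -4x^2 - \tfrac{4}{3}x^4 + O(x^6)$ and $(1-tz^n)(1-tz^{-n}) = u^2 - n^2 x^2 + O((u,x)^4)$; at leading order this produces the shape $\rho = (C-3,C-3,C-3,C-4,\ldots,1,0)$ in \eqref{eq:Gamma0}, where the three deviations from the staircase $\delta$ reflect the $-4x^2$ factor absorbed into the alternating sum. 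For $\gamma_2$ one must track two additional orders in both the Weyl denominator and the factors $(1-tz^{a_{k,i}})(1 - tz^{-a_{k,i}})$; combinatorial bookkeeping yields the combination $42\,s_\rho(\bs{a}) + s_{\rho'}(\bs{a})\bigl(P_2(\bs{a})-8\bigr)$ in \eqref{eq:Gamma2Case1}. The additional term in \eqref{eq:Gamma2Case2} captures a further contribution that arises at $z = -1$ when $d_1 = 1$ and all other $d_k$ are even: the unique odd weight $a_{1,1} = 1$ makes $(1-tz)(1-tz^{-1})|_{z=-1} = 4 + O(u,x)$ non-singular, lowering the pole order at $z=-1$ by two and producing a fresh degree-$(D-5)$ singularity that feeds into $\gamma_2$.

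The identities $\gamma_1 = 3\gamma_0/2$ and $\gamma_3 = 5(\gamma_2-\gamma_0)/2$ then follow from the Gorenstein functional equation $\Hilb_R(t^{-1}) = (-1)^{D-3}t^D\Hilb_R(t)$, which is valid because $\SL_2$ is connected semisimple and acts unimodularly on $V$, so $R$ is Gorenstein by Watanabe's criterion, combined with the Knop--Littelmann value $a(R) = -D$ that fixes the exponent. Substituting $t = 1-u$ into this functional equation and comparing the coefficients of $u^1$ and $u^3$ produces the two stated relations algebraically, recovering \eqref{eq:Gamma3}. The hardest step in this plan will be the combinatorial identification of the Schur polynomial shapes $\rho$ and $\rho'$ (and the precise numerical constants $42$, $8$, and the factor $1/4$) arising from the higher-order Taylor coefficients, together with the bookkeeping for the exceptional low-dimensional cases where the perturbation argument degenerates or where the $z=-1$ analysis requires separate treatment---these must be excluded from the general statement and verified directly.
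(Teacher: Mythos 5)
Your overall route is the same as the paper's: Molien--Weyl plus Weyl integration, residues at $z=\zeta t^{1/a_{K,I}}$ as in Theorem~\ref{thrm:UnivarHilbSer}, perturbation of the weights to real parameters to make the diagonal singularities visibly removable, and identification of the cleaned-up numerators as Schur polynomials via the bialternant. Your treatment of $\gamma_1$ and $\gamma_3$ is a legitimate variant: the paper computes $\gamma_1$ directly (Proposition~\ref{prop:Gamma1First}, Theorem~\ref{thrm:Gamma1Schur}) and then reads off $a(\C[V]^{\SL_2})=-D$, whereas you import $a=-D$ from Knop--Littelmann together with Gorensteinness and the functional equation; Remark~\ref{rem:KnopLittelmanGam1Superfluous} explicitly sanctions this reversal, and your coefficient comparison at orders $u^1$ and $u^3$ is equivalent to the degree-$3$ Gorenstein relation $10\gamma_1-15\gamma_2+6\gamma_3=0$ used in Corollary~\ref{cor:Gamma3Schur}. (For this shortcut you do also need $\dim\C[V]^{\SL_2}=D-3$, which the paper gets from $1$-largeness; you should say so.)

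The genuine gap is that everything that actually constitutes the theorem's content is asserted rather than derived. You never carry out (i) the case analysis of Section~\ref{subsec:LaurentCases} determining, for each $V$, which roots of unity $\zeta$ produce poles of order $\geq D-5$ --- this is exactly what generates the lists of exceptional representations in the statement, and it cannot be waved at as ``bookkeeping''; (ii) the term-by-term Laurent expansions and Cauchy-product computations of Propositions~\ref{prop:Gamma0First}--\ref{prop:Gamma2First}, which produce the rational sums $\Sigma_{R,S,T,U}$ whose coefficients are the source of the constants $42$, $8$, and $1/4$; and (iii) the conversion lemma (Lemma~\ref{lem:GeneralSchur}) rewriting those sums as combinations of $s_M(\bs{b})$ and power sums, which is where the bialternant actually enters, via the substitution $b_{K,d_K-I}=-b_{K,I}$ turning the denominator into $\prod(b_{K,I}^2-b_{k,i}^2)$. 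Moreover, your heuristic for the shape of $\rho$ (``the three deviations from the staircase reflect the $-4x^2$ factor'') is not how the partition arises: in the actual argument the naive answer is a combination of non-standard Schur polynomials $s_{C-3}, s_{C-4}, s_{C-5}$; one then uses $P_1(\bs{a}_\Theta)=0$, the vanishing of alternants with repeated exponents, and the reordering identity $s_{C-4}(\bs{a})=-s_{C-3,C-3,C-3,C-4,\ldots,1,0}(\bs{a})$ of Equation~\eqref{eq:SchurC3} to reach $\rho$ and $\rho'$. Without (i)--(iii) the proposal establishes neither the exceptional lists nor the explicit formulas \eqref{eq:Gamma0}, \eqref{eq:Gamma2Case1}, \eqref{eq:Gamma2Case2}; it is a correct roadmap whose hardest and essential steps remain to be done, as you yourself acknowledge.
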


Note that the second term in Equation~\eqref{eq:Gamma2Case2} can be interpreted as $\gamma_0^\prime/8$,
where $\gamma_0^\prime$ is the first Laurent coefficient of the Hilbert series associated to
$\bigoplus_{k=2}^r V_{d_k}$, unless $\bigoplus_{k=2}^r V_{d_k}$ is one of the exceptions for
Equation~\eqref{eq:Gamma0}.

After briefly discussing the relevant background in Section~\ref{sec:Back}, we turn to the computation
of the Hilbert series in Section~\ref{sec:HilbSer}. We first compute  an expression for the multivariate
Hilbert series, which in this case has no degeneracies, in Section~\ref{subsec:HilbSerMultivar},
and then demonstrate in Section~\ref{subsec:HilbSerAnalyticContin} the analytic continuation used to
state the univariate Hilbert series. We then turn to the computation of the Laurent coefficients
$\gamma_0$, $\gamma_1$, $\gamma_2$, and $\gamma_3$. The naive formulas for these, which only apply in the cases without
degeneracies, are computed in Section~\ref{sec:Laurent}; the removal of the singularities using
Schur polynomials is explained in Section~\ref{sec:LaurentSchur}.
The proof of Theorem~\ref{thrm:Main} is given in Section~\ref{sec:LaurentSchur} as
Theorems~\ref{thrm:Gamma0Schur}, \ref{thrm:Gamma1Schur}, and \ref{thrm:Gamma2Schur}
as well as Corollaries~\ref{cor:aInvar} and \ref{cor:Gamma3Schur}.
In Section~\ref{sec:Algorithm}, we describe an algorithm to compute the Hilbert series
based on Proposition~\ref{thrm:UnivarHilbSer}. Appendix~\ref{ap:Exceptions} lists
the Hilbert series and Laurent coefficients for each representation $V$ that is
an exception to some portion of Theorem~\ref{thrm:Main}.


\section*{Acknowledgements}
We would like to thank Gerald Schwarz for bringing to our attention the work of Friedrich Knop on the $a$-invariant
of invariant rings. Furthermore, we would like to thank Leonid Bedratyuk for pointing out references related to
this project. Herbig, Herden, and Seaton express appreciation to the Institute for Computational and Experimental
Research in Mathematics (ICERM), Herbig and Seaton express appreciation to Baylor University, and Herden
and Seaton express appreciation to the Instituto de Matem\'{a}tica Pura e Aplicada (IMPA) for
hospitality during the work contained in this manuscript. Herbig thanks CNPq for financial support.


\section{Background and Definitions}
\label{sec:Back}

Let $V_d$ denote the irreducible representation of $\SL_2$ of dimension $d + 1$ on binary forms
of degree $d$. Let $V$ be an arbitrary $\SL_2$-representation such that $V^{\SL_2} = \{0\}$.
Then $V$ is of the form
\[
    V = \bigoplus\limits_{k=1}^r V_{d_k}
\]
where each $d_k \geq 1$. Note that the $d_k$ need not be distinct.
We will assume for convenience that they are ordered non-decreasingly, i.e. $d_k \leq d_{k+1}$,
until Section~\ref{sec:Algorithm}, where it will be convenient to use a different notation.
Let $D$ denote the dimension of $V$, which is given by
\[
    D := \dim V = r + \sum\limits_{k=1}^r d_k.
\]
Let $\C[V]^{\SL_2}$ denote the algebra of $\SL_2$-invariant polynomial functions on $V$ with its usual $\N$-grading
by degree, and let $\Hilb_V(t) = \Hilb_{(d_1,\ldots, d_r)}(t)$ denote the univariate Hilbert series of
$\C[V]^{\SL_2}$. In Section~\ref{subsec:HilbSerMultivar}, we will also consider $\C[V]^{\SL_2}$ with the $\N^r$-grading
inherited from the decomposition $V = \bigoplus_{k=1}^r V_{d_k}$. That is, a monomial of degree
$(p_1,\ldots,p_r)$ is the product of monomials on $V_{d_k}$, each of degree $p_k$. We use
$\Hilb_V^r(t_1,\ldots,t_r) = \Hilb_{(d_1,\ldots, d_r)}^r(t_1,\ldots,t_r)$ to denote the corresponding
$r$-variate Hilbert series.

Using the Molien-Weyl formula \cite[Section 4.6.1]{DerskenKemperBook} and Weyl's Integration formula
\cite[Equation (26.19)]{FultonHarris}, the Hilbert series of $\C[V]^{\operatorname{SL}_2}$ can be expressed
as an integral over the Cartan torus of $\SL_2$. It will be helpful to define the constants $a_{k,i} := 2i - d_k$
for $k=1,\ldots,r$ and $0\leq i\leq d_k$, and then the Hilbert series $\Hilb_V(t)$ is given by the integral
\begin{equation}
\label{eq:MainIntFirst}
    \frac{1}{2\pi\sqrt{-1} }\int\limits_{\Sp^1}
        \frac{ (1 - z^2) \, dz}
        {z \prod\limits_{k=1}^r \prod\limits_{i=0}^{d_k} (1 - t z^{d_k - 2i}) }
        =
    \frac{1}{2\pi\sqrt{-1} }\int\limits_{\Sp^1}
        \frac{ (1 - z^2) \, dz}
        {z \prod\limits_{k=1}^r \prod\limits_{i=0}^{d_k} (1 - t z^{-a_{k,i}}) }.
\end{equation}
We will often use $c_k$ to denote a real parameter that is near $d_k$, in the sense that we will consider
the limit as the $c_k \to d_k$. Similarly, we will use $b_{k,i}$ to denote a real parameter near $a_{k,i}$.

For the case of the multivariate Hilbert series $\Hilb_V^r(t_1,\ldots,t_r)$, a simple modification to the proof
of the Molien-Weyl formula yields
\begin{equation}
\label{eq:MainIntMultivar}
    \Hilb_V^r(t_1,\ldots,t_r)
    =
    \frac{1}{2\pi\sqrt{-1} }\int\limits_{\Sp^1}
        \frac{ (1 - z^2) \, dz}
        {z \prod\limits_{k=1}^r \prod\limits_{i=0}^{d_k} (1 - t_k z^{-a_{k,i}}) }.
\end{equation}
See \cite[Equation (13)]{StanleyInvarFinGrp}, where this extension is given for the case of finite groups,
as well as \cite[Section IV]{Forger}, where it is described for the special bigraded case of a real
representation, where the bigrading considers the holomorphic and anti-holomorphic parts separately.

It will be convenient for us to use a few different methods to index the factors in the denominator
of the integral in Equation~\eqref{eq:MainIntFirst}. First, let us define
\[
    \Theta: = \{(k,i)\in \Z\times\Z : 1 \leq k \leq r, \; 0 \leq  i \leq d_k \},
\]
and then the integral in Equation~\eqref{eq:MainIntFirst} can be expressed as
\begin{equation}
\label{eq:MainIntTheta}
    \frac{1}{2\pi\sqrt{-1} }\int\limits_{\Sp^1}
        \frac{ (1 - z^2) \, dz}
        {z \prod\limits_{(k,i)\in\Theta} (1 - t z^{-a_{k,i}}) }.
\end{equation}
Note that $\Theta$ has $D$ elements.

We will sometimes wish to take advantage of the grouping of the nonzero $a_{k,i}$ into positive and
negative pairs. Hence, define $\Lambda$ to be the subset of $\Theta$ consisting of pairs $(k,i)$ such that
$a_{k,i} > 0$, i.e.
\[
    \Lambda = \{(k,i)\in \Z\times\Z : 1 \leq k \leq r, \; \lfloor d_k/2 \rfloor + 1 \leq  i \leq d_k \}.
\]
Let $C$ denote the cardinality of $\Lambda$,
\[
    C := \sum\limits_{k=1}^r \lceil d_k/2 \rceil,
\]
and let $e$ denote the number of $a_{k,i} = 0$, corresponding to the number of $k$ such that $d_k$ is even.
Then we can express Equation~\eqref{eq:MainIntTheta} as
\begin{equation}
\label{eq:MainIntLambda}
    \frac{1}{2\pi\sqrt{-1}}\int\limits_{\Sp^1}
        \frac{ (1 - z^2) \, dz}
        {z (1 - t)^e \prod\limits_{(k,i)\in\Lambda}
            (1 - t z^{-a_{k,i}})(1 - t z^{a_{k,i}})}.
\end{equation}


\section{Computation of the Hilbert Series}
\label{sec:HilbSer}


\subsection{The Multivariate Hilbert Series}
\label{subsec:HilbSerMultivar}

We first consider the computation of the multivariate Hilbert series and prove the following.
As noted above, this result is similar to that of Brion
\cite[Th\'{e}or\`{e}me 1]{Brion};
a computation of the multivariate Hilbert series in a different spirit was given by Bedratyuk and
Bedratyuk in \cite[Theorem 3]{BedratyukBedratyukMultivarPoincare}.

\begin{proposition}
\label{prop:MultivarHilbSer}
Let $V = \bigoplus_{k=1}^r V_{d_k}$ be an $\SL_2$-representation with $V^{\SL_2} = \{0\}$.
The $\N^r$-graded Hilbert series $\Hilb_V^r(t_1,\ldots,t_r)$ is given by
\begin{equation}
\label{eq:MultivarHilbSer}
    \sum\limits_{(K,I)\in\Lambda} \sum\limits_{\zeta^{a_{K,I}} = 1}
    \frac{ 1 - \zeta^2 t_K^{2/a_{K,I}}}
    {a_{K,I}(1 - t_K^2)
    \prod\limits_{d_k\in 2\Z}(1 - t_k)
        \prod\limits_{\substack{(k,i)\in\Lambda \smallsetminus \\ \{(K,I)\}}}
            \beta_{K,I,k,i,\zeta}^r(t_k, t_K)},
\end{equation}
where
$\beta_{K,I,k,i,\zeta}^r(t_k, t_K) := (1 - \zeta^{-a_{k,i}} t_k t_K^{-a_{k,i}/a_{K,I}})
(1 - \zeta^{a_{k,i}} t_k t_K^{a_{k,i}/a_{K,I}})$.
\end{proposition}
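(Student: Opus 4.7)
The plan is to compute the integral \eqref{eq:MainIntMultivar} by the residue theorem. Since the multivariate Hilbert series is a rational function of the $t_k$ that is holomorphic near the origin, I may assume $|t_k|$ is sufficiently small; this unambiguously determines which poles of the integrand lie inside $\Sp^1$, and the resulting rational identity then extends to all $t_k$ by analytic continuation.

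The first step is to locate the interior poles. Writing, for each $(k,i)\in\Lambda$,
\[
    \frac{1}{1 - t_k z^{-a_{k,i}}} = \frac{z^{a_{k,i}}}{z^{a_{k,i}} - t_k},
\]
contributes a total of $\sum_{(k,i)\in\Lambda} a_{k,i} \geq 1$ factors of $z$ to the numerator, so the explicit $1/z$ in \eqref{eq:MainIntMultivar} is absorbed and $z = 0$ is a regular point of the integrand. The $e$ indices with $a_{k,i}=0$ contribute the constants $1 - t_k$, which can be pulled outside the integral. The mirror indices $(k,d_k-i)\in\Theta\smallsetminus\Lambda$ with $a_{k,d_k-i}<0$ give factors whose zeros lie outside the unit disk (for small $|t_k|$) and so contribute no interior poles. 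Thus the interior poles are precisely the points $z_0 = \zeta\, t_K^{1/a_{K,I}}$ for $(K,I)\in\Lambda$ and $\zeta^{a_{K,I}} = 1$; for generic $t_k$ these are distinct simple poles, which is all that is needed to obtain an identity of rational functions.

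The second step is to evaluate each residue. Since $\bigl(1 - t_K z^{-a_{K,I}}\bigr)'\big|_{z=z_0} = a_{K,I}/z_0$, the simple-pole formula gives
\[
    \Res_{z=z_0}\frac{1-z^2}{z\prod_{(k,i)\in\Theta}\bigl(1 - t_k z^{-a_{k,i}}\bigr)} = \frac{1 - \zeta^2 t_K^{2/a_{K,I}}}{a_{K,I}\prod_{(k,i)\in\Theta\smallsetminus\{(K,I)\}}\bigl(1 - t_k z_0^{-a_{k,i}}\bigr)},
\]
where I have substituted $z_0^2 = \zeta^2 t_K^{2/a_{K,I}}$ in the numerator. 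I would then split the remaining product into three pieces: the indices with $a_{k,i}=0$ contribute $\prod_{d_k\in 2\Z}(1-t_k)$; the mirror index $(K,d_K-I)$ of $(K,I)$ has $a_{K,d_K-I}=-a_{K,I}$ and produces $1 - \zeta^{a_{K,I}} t_K^2 = 1 - t_K^2$; and each remaining $(k,i)\in\Lambda\smallsetminus\{(K,I)\}$ paired with its mirror $(k,d_k-i)$ yields precisely $\beta_{K,I,k,i,\zeta}^r(t_k,t_K)$. Summing these residues over $(K,I)\in\Lambda$ and over the $a_{K,I}$-th roots of unity $\zeta$ then reproduces \eqref{eq:MultivarHilbSer}.

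The main obstacle is not any hard estimate but the careful bookkeeping of how $\Theta$ decomposes around the distinguished index $(K,I)$: one must recognize that the mirror partner $(K,d_K-I)$ of $(K,I)$ is precisely what produces the isolated factor $(1-t_K^2)$ in the denominator of \eqref{eq:MultivarHilbSer} lying outside the $\beta_{K,I,k,i,\zeta}^r$ notation, and that the unpaired factors with $a=0$ account for the $\prod_{d_k\in 2\Z}(1-t_k)$ factor. A minor technical point is justifying the passage to generic $t_k$ to ensure that all interior poles are simple; since both sides are rational in the $t_k$, agreement on any open subset forces agreement everywhere.
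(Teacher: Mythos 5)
Your proposal is correct and follows essentially the same route as the paper: evaluate the integral \eqref{eq:MainIntMultivar} by the residue theorem for generic $t_k$ of small modulus, locate the simple poles at $z=\zeta t_K^{1/a_{K,I}}$, and sum the residues, with the mirror index $(K,d_K-I)$ producing the factor $(1-t_K^2)$ and the remaining mirror pairs producing the $\beta^r_{K,I,k,i,\zeta}$ factors. The only cosmetic difference is that you compute each residue via the $g(z_0)/h'(z_0)$ formula, whereas the paper factors $z^{a_{K,I}}-t_K$ into linear factors and uses $\prod_{\zeta\neq 1}(1-\zeta)=a_{K,I}$; both yield the same expression.
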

\begin{proof}
Using Equation~\eqref{eq:MainIntMultivar}, with the factors in the denominator indexed as in
Equation~\eqref{eq:MainIntLambda}, we have that $\Hilb_{(d_1,\ldots, d_r)}^r(t_1,\ldots,t_r)$
is equal to
\begin{align*}
    &\frac{1}{2\pi\sqrt{-1}}\int\limits_{\Sp^1}
        \frac{ (1 - z^2) \, dz}
        {z  \prod\limits_{d_k\in 2\Z}(1 - t_k) \prod\limits_{(k,i)\in\Lambda}
            (1 - t_k z^{-a_{k,i}})(1 - t_k z^{a_{k,i}})}\\
    & \quad =
    \frac{1}{2\pi\sqrt{-1}}\int\limits_{\Sp^1}
        \frac{(1 - z^2) z^{-1+\sum_{(k,i)\in\Lambda} a_{k,i}} \, dz}
        {  \prod\limits_{d_k\in 2\Z}(1 - t_k) \prod\limits_{(k,i)\in\Lambda}
            (z^{a_{k,i}} - t_k)(1 - t_k z^{a_{k,i}})}.
\end{align*}
Assume that each $|t_k| < 1$ for each $k$. Then the poles in $z$ inside the unit disk occur at points such that
$z^{a_{k,i}} = t_k$, i.e. points of the form $z = \zeta t_k^{1/a_{k,i}}$ where
$\zeta$ is an $a_{k,i}$th root of unity and the $ t_k^{1/a_{k,i}}$ are defined using
a suitably chosen, fixed branch of the logarithm. We assume that these poles are distinct,
which is true for a generic choice of the $t_k$.
Fix a $(K,I)\in\Lambda$ and an
$a_{K,I}$th root of unity $\zeta_0$, and then we express
\begin{align*}
    &\frac{ 1 - z^2}
    {z \prod\limits_{d_k\in 2\Z}(1 - t_k) \prod\limits_{(k,i)\in\Lambda}
        (1 - t_k z^{-a_{k,i}})(1 - t_k z^{a_{k,i}})}
    \\&\quad=
    \frac{ z^{a_{K,I} - 1} (1 - z^2)}
    {(z^{a_{K,I}} - t_K) (1 - t_K z^{a_{K,I}}) \prod\limits_{d_k\in 2\Z}(1 - t_k)
        \prod\limits_{\substack{(k,i)\in\Lambda\smallsetminus \\ \{(K,I)\}}}
            (1 - t_k z^{-a_{k,i}})(1 - t_k z^{a_{k,i}}) },
\end{align*}
with
\[
    z^{a_{K,I}} - t_K = (z - \zeta_0 t_K^{1/a_{K,I}})
        \prod\limits_{\substack{\zeta^{a_{K,I}} = 1\\ \zeta\neq\zeta_0}} (z - \zeta t_K^{1/a_{K,I}}).
\]
Hence, we have a simple pole at $z = \tau:= \zeta_0 t_K^{1/a_{K,I}}$, and the residue at $z = \tau$
is given by
\begin{align*}
    &\frac{ \tau^{a_{K,I} - 1} (1 - \tau^2)}
    {(1 - t_K \tau^{a_{K,I}})
        \prod\limits_{\substack{\zeta^{a_{K,I}} = 1\\ \zeta\neq\zeta_0}} (\tau - \zeta t_K^{1/a_{K,I}})
        \prod\limits_{d_k\in 2\Z}(1 - t_k) \prod\limits_{\substack{(k,i)\in\Lambda\smallsetminus \\ \{(K,I)\}}}
            (1 - t_k \tau^{-a_{k,i}})(1 - t_k \tau^{a_{k,i}}) }
    \\&=
    \frac{ \tau^{a_{K,I} - 1} (1 - \tau^2)}
    {(1 - t_K^2) \tau^{a_{K,I} - 1}
        \prod\limits_{\substack{\zeta^{a_{K,I}} = 1\\ \zeta\neq 1}} (1 - \zeta)
        \prod\limits_{d_k\in 2\Z}(1 - t_k) \prod\limits_{\substack{(k,i)\in\Lambda\smallsetminus \\ \{(K,I)\}}}
            (1 - t_k \tau^{-a_{k,i}})(1 - t_k \tau^{a_{k,i}}) }
    \\&=
    \frac{ 1 - \zeta_0^2 t_K^{2/a_{K,I}}}
    {a_{K,I}(1 - t_K^2) \prod\limits_{d_k\in 2\Z}(1 - t_k)
        \prod\limits_{\substack{(k,i)\in\Lambda\smallsetminus \\ \{(K,I)\}}}
            \beta_{K,I,k,i,\zeta_0}^r(t_k, t_K)
            }.
\end{align*}
Summing over each choice of $(K,I)$ and $\zeta_0$ completes the proof.
\end{proof}


\subsection{Analytic Continuation and the Univariate Hilbert Series}
\label{subsec:HilbSerAnalyticContin}

By the definition of the multivariate Hilbert series, it is clear that $\Hilb_V^r(\Delta_t) = \Hilb_V(t)$,
where $\Delta_t := (t,\ldots,t)\in \C^r$. However, the expression for $\Hilb_V^r(t_1,\ldots,t_r)$
given by Proposition~\ref{prop:MultivarHilbSer} is not defined after this substitution unless
$r = 1$ or $2$ and, when $r = 2$, one element of $\{d_1,d_2\}$ is even and the other is odd. One checks
that in all other cases, factors of the form $(1 - t_k t_K^{-1})$ appear in the denominator, e.g.
when $d_k$ and $d_K$ have the same parity so that for some choice of $i$ and $I$,
$a_{k,i} = a_{K,I}$ and $\zeta^{-a_{k,i}} = \zeta^{-a_{K,I}} = 1$. While it is again clear from the definitions that
$\lim_{(t_1,\ldots,t_r)\to\Delta_t} \Hilb_V^r(t_1,\ldots,t_r) = \Hilb_V(t)$,
we demonstrate explicitly in this section that the corresponding singularities in
Equation~\eqref{eq:MultivarHilbSer}
are removable, yielding an expression for the univariate Hilbert
series that is sufficiently explicit to compute the Laurent coefficients.
Note that Equation~\eqref{eq:MultivarHilbSer} has singularities at $t_k = t_K=t$ in the open unit disk
only where $\zeta^{-a_{k,i}} t^{(a_{K,I}-a_{k,i})/a_{K,I}} = 1$, which only occur in factors where $a_{K,I} = a_{k,i}$.
The argument in this section is similar to that of \cite[Section 3.3]{HerbigSeaton} and
\cite[Theorem 3.3]{CowieHerbigSeatonHerden}, the point here being that the same techniques extend
to the case of $G = \SL_2$ with very little modification.

To simplify the argument, we re-index as follows. Let $a$ be a positive value of $a_{i,j}$ that occurs with
multiplicity, set
\[
    \Lambda^a:= \{ (k,i)\in \Lambda : a_{k,i} \neq a \},
\]
and let $N$ be the cardinality of $\Lambda\smallsetminus\Lambda^a$. We consider the integral
\[
    \frac{1}{2\pi\sqrt{-1}}\int\limits_{\Sp^1}
        \frac{ (1 - z^2) \, dz}
        {z (1 - t)^e\prod\limits_{j=1}^N
                (1 - x_j z^{-a})(1 - x_j z^{a})
            \prod\limits_{(k,i)\in\Lambda^a}
                (1 - x_{k,i} z^{-a_{k,i}})(1 - x_{k,i} z^{a_{k,i}})}
\]
where the $x_j$ and $x_{k,i}$ are assumed distinct of modulus less than $1$ and contained in a fixed branch of
the logarithm. Because the integrand is defined and continuous and hence bounded for the $x_j$ and $x_{k,i}$
sufficiently close to $t$, an application of the Dominated Convergence Theorem demonstrates that the limit of
this integral as the $x_j \to t$ and $x_{k,i}\to t$, provided it exists, is equal to $\Hilb_V(t)$.

At a pole of the form $z = \zeta x_J^{1/a}$ where $\zeta$ is an $a$th root of unity, a computation identical to
that in Proposition~\ref{prop:MultivarHilbSer} yields that the residue is given by
\[
    \frac{ 1 - \zeta^2 x_J^{2/a}}
    {a (1 - t)^e (1 - x_J^2)
    \prod\limits_{\substack{j=1\\ j\neq J}}^N
    (1 - x_j x_J^{-1}) (1 - x_j x_J)
        \prod\limits_{(k,i)\in\Lambda^a}
            \beta_{K,I,k,i,\zeta}^r(x_{k,i}, x_J)}.
\]
Rewrite
\[
    \frac{ x_J^{N-1} (1 - \zeta^2 x_J^{2/a})}
    {a(1 - t)^e (1 - x_J^2)
    \prod\limits_{\substack{j=1\\ j\neq J}}^N
    (x_J - x_j) (1 - x_j x_J)
        \prod\limits_{(k,i)\in\Lambda^a}
            \beta_{K,I,k,i,\zeta}^r(x_{k,i}, x_J)},
\]
and consider the sum of the residues at $z = \zeta x_J^{1/a}$ where $J$ ranges from $1$ to $N$ and
$\zeta$ remains fixed, i.e.
\begin{align*}
    &\sum\limits_{J=1}^N \frac{ x_J^{N-1} (1 - \zeta^2 x_J^{2/a})}
    {a(1 - t)^e (1 - x_J^2)
    \prod\limits_{\substack{j=1\\ j\neq J}}^N
    (x_J - x_j) (1 - x_j x_J)
        \prod\limits_{(k,i)\in\Lambda^a}
            \beta_{K,I,k,i,\zeta}^r(x_{k,i}, x_J)}
    \\ &=
    \frac{1}
    {a(1 - t)^e\prod\limits_{p=1}^N(1 - x_p^2)
    \prod\limits_{1\leq p < q \leq N}
    (x_p - x_q) (1 - x_p x_q)
        \prod\limits_{\substack{(k,i)\in\Lambda^a\\ 1\le j \le N}}
            \beta_{K,I,k,i,\zeta}^r(x_{k,i}, x_j)}\cdot\\
    & \quad\quad \sum\limits_{J=1}^N  \left((-1)^{J-1} x_J^{N-1} (1 - \zeta^2 x_J^{2/a})
     \prod\limits_{\substack{p=1\\ p\neq J}}^N(1 - x_p^2)
        \prod\limits_{\substack{{1\leq p < q \leq N} \\ p,q\neq J}}
            (x_p - x_q) (1 - x_p x_q)\cdot \right. \\ & \quad\quad\quad
        \left.
        \prod\limits_{\substack{(k,i)\in\Lambda^a\\ 1\le j \le N, j\ne J}}
            \beta_{K,I,k,i,\zeta}^r(x_{k,i}, x_j)\right).
\end{align*}
The numerator is easily seen to be alternating in the $x_J$, implying that it is divisible
by the Vandermonde determinant $\prod_{1\leq p < q \leq N}(x_p - x_q)$ in the denominator.
That is, for each fixed $\zeta$, the sum of residues at poles of the form $z = \zeta x_J^{1/a}$
has removable singularities in the $x_J$ at points where each $x_J = t$. Applying this argument
to each value of $a_{k,i}$ that occurs in $\Lambda$ with multiplicity, it follows that the
limit of Equation~\eqref{eq:MultivarHilbSer} as each $t_j\to t$ exists. Finally, by a series of
simple substitutions identical to those used in \cite[pages 52--53]{HerbigSeaton} and
\cite[proof of Theorem 3.3]{CowieHerbigSeatonHerden}, we have the following.

\begin{theorem}
\label{thrm:UnivarHilbSer}
Let $V = \bigoplus_{k=1}^r V_{d_k}$ be an $\SL_2$-representation with $V^{\SL_2} = \{0\}$.
Let $\bs{a} := \big(a_{k,i} : (k,i)\in\Lambda\big)$, and let
$\bs{b} := \big(b_{k,i} : (k,i)\in\Lambda\big)$ where the $b_{k,i}$ are
real parameters. The $\N$-graded Hilbert series is given by
\begin{equation}
\label{eq:UnivarHilbSer}
    \Hilb_V(t) =
    \lim\limits_{\bs{b}\to\bs{a}}
    \sum\limits_{(K,I)\in\Lambda} \sum\limits_{\zeta^{a_{K,I}} = 1}
    \frac{ 1 - \zeta^2 t^{2/b_{K,I}}}
    {b_{K,I}(1 - t^2)(1 - t)^e
        \prod\limits_{\substack{(k,i)\in\Lambda\smallsetminus \\ \{(K,I)\}}}
            \beta_{K,I,k,i,\zeta}(t)
            },
\end{equation}
where $\beta_{K,I,k,i,\zeta}(t) := (1 - \zeta^{-a_{k,i}} t^{(b_{K,I}-b_{k,i})/b_{K,I}})
(1 - \zeta^{a_{k,i}} t^{(b_{K,I}+b_{k,i})/b_{K,I}})$.

Alternatively, using $\bs{a}_\Theta := \big(a_{k,i} : (k,i)\in\Theta\big)$ and real
parameters $\bs{b}_\Theta := \big(b_{k,i} : (k,i)\in\Theta\big)$,
the $\N$-graded Hilbert series is given by
\begin{equation}
\label{eq:UnivarHilbSerTheta}
    \Hilb_V(t) =
    \lim\limits_{\bs{b}_\Theta\to\bs{a}_\Theta}
    \sum\limits_{(K,I)\in\Lambda} \sum\limits_{\zeta^{a_{K,I}} = 1}
    \frac{ 1 - \zeta^2 t^{2/b_{K,I}}}
    {b_{K,I}
    \prod\limits_{\substack{(k,i)\in\Theta\smallsetminus \\ \{(K,I)\}}}
            (1 - \zeta^{-a_{k,i}} t^{(b_{K,I}-b_{k,i})/b_{K,I}})
            }.
\end{equation}
\end{theorem}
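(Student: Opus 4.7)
The plan is to start from the multivariate formula in Proposition~\ref{prop:MultivarHilbSer} and pass to the diagonal limit as all $t_k\to t$. A naive substitution $t_k=t$ is illegal because factors like $(1-t_k t_K^{-a_{k,i}/a_{K,I}})$ collapse to $1-t^0=0$ whenever $a_{k,i}=a_{K,I}$, exactly the degeneracies flagged in the paragraph preceding the theorem. That the diagonal limit itself exists and equals $\Hilb_V(t)$ I would deduce at the level of the contour integral in Equation~\eqref{eq:MainIntMultivar}: restricting the $t_k$ to a compact neighborhood of $t$ inside the open unit disk, the integrand is bounded uniformly in $z\in\Sp^1$, so the Dominated Convergence Theorem allows the limit to be pulled inside, yielding the integral of Equation~\eqref{eq:MainIntFirst}, which is $\Hilb_V(t)$ by Molien--Weyl.

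To put the result in the closed form~\eqref{eq:UnivarHilbSer}, I would redo the residue computation of Proposition~\ref{prop:MultivarHilbSer} but with perturbed variables $x_1,\ldots,x_N$ in place of the $t_k$ for each value $a$ that $a_{k,i}$ attains with multiplicity $N$. Grouping together the $N$ residues at poles $z=\zeta x_J^{1/a}$ sharing a fixed root of unity $\zeta$, the resulting sum can be placed over a common denominator containing the Vandermonde $\prod_{1\le p<q\le N}(x_p-x_q)$, while the numerator is easily checked to be alternating in $(x_1,\ldots,x_N)$ and hence divisible by that Vandermonde. This cancellation shows that each apparent pole is removable. The trick parallels \cite[Section 3.3]{HerbigSeaton} and \cite[Theorem 3.3]{CowieHerbigSeatonHerden}, and I expect the explicit verification of the alternating property to be the most delicate bookkeeping step of the argument.

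Finally, the output is packaged through the substitutions $t_k=t^{b_{k,i}/b_{K,I}}$ with real parameters $b_{k,i}$ near $a_{k,i}$. Under this substitution $t_k t_K^{\pm a_{k,i}/a_{K,I}}$ becomes $t^{(b_{K,I}\pm b_{k,i})/b_{K,I}}$, which matches the factors of $\beta_{K,I,k,i,\zeta}(t)$; summing over $(K,I)\in\Lambda$ and over $a_{K,I}$-th roots of unity $\zeta$ then reproduces Equation~\eqref{eq:UnivarHilbSer}. For the alternative form~\eqref{eq:UnivarHilbSerTheta}, the prefactor $(1-t)^e$ (whose factors arise from the $e$ indices with $a_{k,i}=0$, each contributing a $\Theta$-factor that limits to $1-t$ as $b_{k,i}\to 0$) together with the factor $(1-t^2)$ (coming from the conjugate partner of $(K,I)$ in $\Theta$, since $\zeta^{a_{K,I}}=1$ forces $(1-\zeta^{a_{K,I}}t^2)=(1-t^2)$) are absorbed into a single product over $\Theta\setminus\{(K,I)\}$. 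Once the Vandermonde cancellation is in hand, these remaining substitutions are routine.
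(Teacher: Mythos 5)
Your overall strategy is the paper's: bound the integrand of \eqref{eq:MainIntMultivar} uniformly on $\Sp^1$ for parameters in a compact subset of the open unit disk and use dominated convergence to identify the diagonal limit of the integral with \eqref{eq:MainIntFirst}; then perturb the factors sharing a repeated weight $a$ by distinct auxiliary variables, group the residues at $z=\zeta x_J^{1/a}$ with $\zeta$ fixed, and observe that over the common denominator the numerator is alternating in the $x_J$'s, hence divisible by the Vandermonde, so the diagonal singularities are removable. Your explanation of how \eqref{eq:UnivarHilbSerTheta} relates to \eqref{eq:UnivarHilbSer} (the partner $(K,d_K-I)$ contributing $1-t^2$ since $\zeta^{a_{K,I}}=1$, and the $e$ zero weights contributing $(1-t)^e$) is also correct and matches the paper.

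The step that is not right as written is the final packaging. Under $t_K\mapsto t$ and $t_k\mapsto t^{b_{k,i}/b_{K,I}}$ one gets $t_k t_K^{\pm a_{k,i}/a_{K,I}}=t^{b_{k,i}/b_{K,I}\pm a_{k,i}/a_{K,I}}$, which coincides with $t^{(b_{K,I}\pm b_{k,i})/b_{K,I}}$ only in the degenerate situation $a_{k,i}=a_{K,I}$ (and, for the minus sign, only when $b_{k,i}/b_{K,I}=a_{k,i}/a_{K,I}$); for a non-degenerate factor your substitution would give, in the limit, $1-\zeta^{-a_{k,i}}$ rather than the correct $1-\zeta^{-a_{k,i}}t^{(a_{K,I}-a_{k,i})/a_{K,I}}$. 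In fact no assignment of values to the $t_k$ (or to the finer variables $x_{k,i}$) that is independent of $(K,I)$ converts the residue sum into the right-hand side of \eqref{eq:UnivarHilbSer}: that expression arises from perturbing the integer exponents $a_{k,i}$ (and the prefactor $a_{K,I}$) to real parameters $b_{k,i}$, not from evaluating the multivariate formula, so the agreement of its $\bs{b}\to\bs{a}$ limit with the diagonal value still requires an argument --- for instance, repeating the grouping and Vandermonde-divisibility computation directly for the $\bs{b}$-parametrized terms, or carrying out the chain of substitutions in \cite[pages 52--53]{HerbigSeaton} and \cite[proof of Theorem 3.3]{CowieHerbigSeatonHerden} that the paper invokes at exactly this point. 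Declaring this ``routine'' while recording an incorrect substitution leaves a genuine gap precisely where the two deformations (perturbed coefficients versus perturbed exponents) must be reconciled; the rest of your argument is sound.
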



\section{The Coefficients of the Laurent Expansion}
\label{sec:Laurent}

In this section, we compute the first four coefficients of the Laurent expansion at $t = 1$ of the
expression given in Equation~\eqref{eq:UnivarHilbSerTheta}. We will see in Section~\ref{sec:LaurentSchur} that,
after taking the limit $\bs{b}_\Theta\to\bs{a}_\Theta$, these coefficients correspond to the Laurent
coefficients $\gamma_0$, $\gamma_1$, $\gamma_2$, and $\gamma_3$ of $\Hilb_V(t)$, see Equation~\eqref{eq:DefGammas}.
Throughout this section, it will be convenient to index the factors of the integral as in
Equation~\eqref{eq:MainIntTheta}. Hence, for $(K,I)\in\Lambda$ and $\zeta$ an $a_{K,I}$th root
of unity, we define
\begin{equation}
\label{eq:DefH}
    H_{V,K,I,\zeta}(\bs{b}_\Theta,t):=   \frac{ 1 - \zeta^2 t^{2/b_{K,I}}}
    {b_{K,I} \prod\limits_{\substack{(k,i)\in\Theta\smallsetminus \\ \{(K,I)\}}}
            (1 - \zeta^{-a_{k,i}} t^{(b_{K,I}-b_{k,i})/b_{K,I}})
            }
\end{equation}
so that by Theorem~\ref{thrm:UnivarHilbSer},
\begin{equation}
\label{eq:DefH2}
    \Hilb_V(t)  =   \lim\limits_{\bs{b}_\Theta\to\bs{a}_\Theta}
        \sum\limits_{(K,I)\in\Lambda} \sum\limits_{\zeta^{a_{K,I}} = 1}
        H_{V,K,I,\zeta}(\bs{b}_\Theta,t).
\end{equation}
Our method will be to consider the Laurent expansions of each of the terms
$H_{V,K,I,\zeta}(\bs{b}_\Theta,t)$ separately.


\subsection{Discussion of Cases}
\label{subsec:LaurentCases}

Assume $V$ is $1$-large, which is true unless $V$ is isomorphic to $V_1$, $2V_1$, or $V_2$ by
\cite[Theorem 3.4]{HerbigSchwarz}; we refer the reader to this reference for the definition
of $1$-large. Then $\C[V]^{\SL_2}$ has Krull dimension $D - 3$,
see \cite[Remark 9.2(3)]{GWSlifting}. Hence, the first nontrivial Laurent coefficient occurs in
degree $3-D$.

Now, any term of the form $H_{V,K,I,1}(\bs{b}_\Theta, t)$ for $(K,I)\in\Lambda$ is of the form
\begin{equation}
\label{eq:TermZ1}
    H_{V,K,I,1}(\bs{b}_\Theta, t) =
    \frac{ 1 - t^{2/b_{K,I}}}
    {b_{K,I} \prod\limits_{\substack{(k,i)\in\Theta\smallsetminus \\ \{(K,I)\}}}
            (1 - t^{(b_{K,I} - b_{k,i})/b_{K,I}})}.
\end{equation}
Each such term has a pole of order $|\Theta|-2=D - 2$ at $t = 1$. If each $d_k$ is even, then each
$a_{k,i}$ is even, and then for each $(K,I)\in\Lambda$, we have
$H_{V,K,I,1}(\bs{b}_\Theta, t) = H_{V,K,I,-1}(\bs{b}_\Theta, t)$. To simplify notation, we define
\begin{equation}
\label{eq:DefPV}
    \sigma_V :=
    \begin{cases}
        1,  &   \mbox{if any $d_k$ is odd,}         \\
        2,  &   \mbox{if all $d_k$ are even.}
    \end{cases}
\end{equation}

If $d_1 = 1$ and all other $d_k$ are even, then for $K > 1$ and any $I$
the term $H_{V,K,I,-1}(\bs{b}_\Theta, t)$ is given by
\begin{align}
\label{eq:Case2TermNot1}
    &
    (1 - t^{2/b_{K,I}})
        \Big(b_{K,I} (1 + t^{(b_{K,I}-b_{1,0})/b_{K,I}})
            (1 + t^{(b_{K,I}-b_{1,1})/b_{K,I}}) \cdot
    \\ \nonumber &\quad\quad
            \prod\limits_{\substack{(k,i)\in\Theta\smallsetminus \\ \{(K,I),(1,0),(1,1)\} }}
                (1 - t^{(b_{K,I}-b_{k,i})/b_{K,I}}) \Big)^{-1}.
\end{align}
which has a pole of order $D - 4$. In particular, in Equation \eqref{eq:DefH2}
any term of the form $H_{V,K,I,-1}$ would have a pole of order $D - 4$.
In any other case, i.e. if two or more $d_k$ are odd or one $d_k > 1$ is odd,
then any $H_{V,K,I,-1}$ appearing in Equation \eqref{eq:DefH2} has a pole
of order at most $D - 6$.

Now, if $\zeta\neq\pm 1$ is an $a_{K,I} = (2I - d_K)$th root of unity, then it must be that
$|a_{K,I}| \geq 3$, and hence $d_K \geq 3$. The numerator $1 - \zeta^2 t^{2/b_{K,I}}$ of
$H_{V,K,I,\zeta}(\bs{b}_\Theta,t)$ no longer has a zero at $t = 1$. For each odd $d_k$, we have
$a_{k,(d_k\pm 1)/2} = \pm 1$,
while for each even $d_k$, we have $a_{k,(d_k/2) \pm 1} = \pm 2$, and
$\zeta^{\pm 1}, \zeta^{\pm 2} \neq 1$. Hence, if $r \geq 2$, then in Equation \eqref{eq:DefH2}
each such term has a pole of
order at most $D - 5$. If $r = 1$ and $d_1 = 3$ or $4$, i.e. $V \cong V_3$ or $V \cong V_4$,
then some terms $H_{V,K,I,\zeta}(\bs{b}_\Theta, t)$ in Equation \eqref{eq:DefH2} corresponding to
such a $\zeta$ may have pole order $D-3$. If $r = 1$ and $d_1 \ge 5$,
it is easy to see that at least four of the $a_{k,i}$ satisfy $\zeta^{a_{k,i}} \neq 1$ so that the
pole order of such a term is at most $D - 5$.

Hence, in the computation of $\gamma_0$, the coefficient of the Laurent series at $t = 1$ of degree $3-D$,
and $\gamma_1$, the coefficient of degree $4-D$, we must consider  $V_1$, $2V_1$, $V_2$, $V_3$, and $V_4$ as exceptions. In all other cases, only terms corresponding to $\zeta = \pm 1$ contribute to $\gamma_0$ and $\gamma_1$.
For $\gamma_1$, we must also consider $V_1 + V_2$ as an exception, because $H_{V,K,I,\pm 1}(\bs{b}_\Theta, t)$
in this case has too few factors for our arguments to apply.

In the computation of the coefficient $\gamma_2$ of degree $5-D$, there are more exceptions.
By considering cases as above, it is easy to see that for $\zeta\neq\pm 1$, the expression
$H_{V,K,I,\zeta}(\bs{b}_\Theta, t)$ in Equation \eqref{eq:DefH2} may have a pole of order
$D-5$ when $r = 1$ and $d = 1, 2, 3, 4, 5, 6, 8$, or when $r = 2$ in the cases
$2V_1$, $V_1+V_3$, $V_1+V_4$, $V_2+V_3$, $V_2+V_4$, $2V_3$, and $2V_4$. For any other case,
$H_{V,K,I,\zeta}(\bs{b}_\Theta, t)$ has a pole order of at most $D-7$ unless $\zeta=\pm 1$.
We also exclude $V_1+V_2$ and $2V_2$ as exceptions, because $H_{V,K,I,\pm 1}(\bs{b}_\Theta, t)$
has too few factors.

Excluding these exceptions, terms of the form $H_{V,K,I,-1}(\bs{b}_\Theta, t)$ contribute to
$\gamma_0$ only if each $d_k$ is even, in which case they are identical to the
corresponding $H_{V,K,I,1}(\bs{b}_\Theta, t)$ and hence contribute to $\gamma_0$, $\gamma_1$, and $\gamma_2$
in the same way.
If there are at least two odd $d_k$ or one odd $d_k > 1$, then
$H_{V,K,I,-1}(\bs{b}_\Theta, t)$ has a pole of order at most $D-6$ so that
only terms corresponding to $\zeta = 1$ contribute to $\gamma_0$, $\gamma_1$, and $\gamma_2$. If $d_1 = 1$ and all other $d_k$ are even,
then terms with $\zeta = -1$ contribute to $\gamma_1$ and $\gamma_2$.

It is of interest to note that the Laurent series of $H_{V,K,I,1}(\bs{b}_\Theta, t)$ has a term of degree $2-D$ with
coefficient
\[
    \frac{2b_{K,I}^{D-3} }
        {\prod\limits_{\substack{(k,i)\in\Theta\smallsetminus \\ \{(K,I)\} }}
            (b_{K,I}-b_{k,i}) }.
\]
Hence, by the above observations, we have the following.

\begin{corollary}
\label{cor:FirstCoeffVanish}
Let $V = \bigoplus_{k=1}^r V_{d_k}$ be an $\SL_2$-representation with $V^{\SL_2} = \{0\}$,
and assume that $V$ is not isomorphic to $V_1$, $2V_1$, nor $V_2$. Then
\begin{equation}
\label{eq:CorD2}
    \lim\limits_{\bs{b}_\Theta\to\bs{a}_\Theta} \sum\limits_{(K,I)\in\Lambda}
    \frac{2b_{K,I}^{D-3} }
        {\prod\limits_{\substack{(k,i)\in\Theta\smallsetminus \\ \{(K,I)\} }}
            (b_{K,I}-b_{k,i}) }
    = 0.
\end{equation}
\end{corollary}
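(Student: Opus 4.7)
The plan is to extract the coefficient of $(1-t)^{-(D-2)}$ from both sides of the identity~\eqref{eq:DefH2} supplied by Theorem~\ref{thrm:UnivarHilbSer}, and compare. Since $V \not\cong V_1, 2V_1, V_2$, by \cite[Theorem 3.4]{HerbigSchwarz} the representation $V$ is $1$-large, and so by \cite[Remark~9.2(3)]{GWSlifting} the Krull dimension of $\C[V]^{\SL_2}$ equals $D-3$. Consequently $\Hilb_V(t)$ has a pole at $t=1$ of order exactly $D-3$, and the coefficient of $(1-t)^{-(D-2)}$ in its Laurent expansion at $t=1$ vanishes.

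For the right-hand side of~\eqref{eq:DefH2}, I would first compute the leading Laurent term of $H_{V,K,I,1}(\bs{b}_\Theta,t)$ directly from Equation~\eqref{eq:TermZ1} via the expansion $1 - t^c = c(1-t) + O((1-t)^2)$ valid for any real exponent $c$. The numerator contributes a factor $\frac{2}{b_{K,I}}(1-t)$, each of the $D-1$ denominator factors contributes $\frac{b_{K,I}-b_{k,i}}{b_{K,I}}(1-t)$, and together with the prefactor $b_{K,I}^{-1}$ these combine to yield
\[
    H_{V,K,I,1}(\bs{b}_\Theta,t) = \frac{2\, b_{K,I}^{D-3}}{(1-t)^{D-2}\prod_{(k,i)\in\Theta\smallsetminus\{(K,I)\}}(b_{K,I}-b_{k,i})} + O\bigl((1-t)^{3-D}\bigr),
\]
matching the expression flagged in the paragraph preceding the corollary. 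Next, I would invoke the case analysis of Section~\ref{subsec:LaurentCases}: under the stated hypotheses every $H_{V,K,I,\zeta}(\bs{b}_\Theta,t)$ with $\zeta \neq 1$ either has a pole at $t = 1$ of order strictly less than $D-2$, or else all $d_k$ are even, $\zeta=-1$, and $H_{V,K,I,-1} = H_{V,K,I,1}$. The latter doubling is exactly what the factor $\sigma_V$ of Equation~\eqref{eq:DefPV} records.

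Assembling these ingredients, the coefficient of $(1-t)^{-(D-2)}$ in the inner sum $\sum_{(K,I),\zeta} H_{V,K,I,\zeta}(\bs{b}_\Theta,t)$ equals
\[
    \sigma_V \sum_{(K,I)\in\Lambda} \frac{2\, b_{K,I}^{D-3}}{\prod_{(k,i)\in\Theta\smallsetminus\{(K,I)\}}(b_{K,I}-b_{k,i})}.
\]
Taking $\bs{b}_\Theta \to \bs{a}_\Theta$ and combining with the first paragraph forces this limit to vanish; dividing through by $\sigma_V \neq 0$ yields Equation~\eqref{eq:CorD2}.

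The one delicate point, and the main obstacle, is justifying the exchange of the limit $\bs{b}_\Theta \to \bs{a}_\Theta$ with the extraction of a particular Laurent coefficient at $t=1$: individual terms $H_{V,K,I,1}(\bs{b}_\Theta,t)$ carry leading coefficients that are singular along the loci where two $b_{k,i}$ coincide, and these singular contributions cancel only after summing over $(K,I)$. However, the Vandermonde-divisibility argument of Section~\ref{subsec:HilbSerAnalyticContin} already resolves exactly these singularities in $\bs{b}_\Theta$, showing that each fixed-order Laurent coefficient of the total sum extends (with removable singularities) across $\bs{b}_\Theta = \bs{a}_\Theta$; this legitimizes the interchange and completes the proof.
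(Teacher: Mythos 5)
Your proposal is correct and follows essentially the same route as the paper: the degree $2-D$ coefficient must vanish because the pole order of $\Hilb_V(t)$ equals the Krull dimension $D-3$ (via $1$-largeness), while the case analysis of Section~\ref{subsec:LaurentCases} shows that only the $\zeta=\pm 1$ terms (with the $\sigma_V$ doubling when all $d_k$ are even) contribute at that order, with leading coefficient exactly $2b_{K,I}^{D-3}/\prod(b_{K,I}-b_{k,i})$. Your handling of the limit interchange is slightly more explicit than the paper's (which defers such justifications to the Dominated Convergence Theorem argument in Section~\ref{sec:LaurentSchur}), but the substance is the same.
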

Note that the quantity on the left side of Equation~\eqref{eq:CorD2} is equal to $1$ when
$V\cong V_1$, equal to $-1/4$ when $V\cong V_2$, and equal to $-1$ when $V\cong 2V_1$.


\subsection{$\gamma_0$}
\label{subsec:LaurentGamma0}

In this section, we compute the degree $3-D$ coefficient of the Laurent series of
$\sum_{(K,I)\in\Lambda} \sum_{\zeta^{a_{K,I}} = 1} H_{V,K,I,\zeta}(\bs{b}_\Theta,t)$.
In Section~\ref{sec:LaurentSchur}, we will use that $\gamma_0$ is the limit of
the expression computed here as $\bs{b}_\Theta\to\bs{a}_\Theta$.

\begin{proposition}
\label{prop:Gamma0First}
Let $V = \bigoplus_{k=1}^r V_{d_k}$ be an $\SL_2$-representation with $V^{\SL_2} = \{0\}$,
and assume $V$ is not isomorphic to $2V_1$, nor $V_d$ for $d \leq 4$.
Let $\bs{a}_\Theta := \big(a_{k,i} : (k,i)\in\Theta\big)$, and let
$\bs{b}_\Theta := \big(b_{k,i} : (k,i)\in\Theta\big)$ where the $b_{k,i}$ are
real parameters. The degree $3-D$ coefficient of the Laurent series of
$\sum_{(K,I)\in\Lambda} \sum_{\zeta^{a_{K,I}} = 1} H_{V,K,I,\zeta}(\bs{b}_\Theta,t)$
is given by
\begin{align}
\label{eq:Gamma0First}
    &\sigma_V \sum\limits_{(K,I) \in \Lambda} \left(
        \frac{b_{K,I}^{D-3} - 2 b_{K,I}^{D-4}
            - \sum\limits_{\substack{ (\kappa,\lambda)\in\Theta\smallsetminus \\ \{(K,I)\} }}
                b_{K,I}^{D-4} b_{\kappa,\lambda} }
        {\prod\limits_{\substack{(k,i)\in\Theta\smallsetminus \\ \{(K,I)\} }}
            (b_{K,I} - b_{k,i}) }
            \right)
    \\ \nonumber &\quad\quad =
    \sigma_V \sum\limits_{(K,I) \in \Lambda} b_{K,I}^{D-4}\
        \frac{2b_{K,I} - 2
        - \sum\limits_{(k,i)\in\Theta} b_{k,i}}
        {\prod\limits_{\substack{(k,i)\in\Theta\smallsetminus \\ \{(K,I)\} }}(b_{K,I} - b_{k,i})}.
\end{align}
\end{proposition}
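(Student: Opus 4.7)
The plan is to combine the case analysis of Section~\ref{subsec:LaurentCases} with an explicit Laurent expansion of the single-term expression $H_{V,K,I,1}(\bs{b}_\Theta,t)$ at $t = 1$. By the hypotheses that $V$ is not $2V_1$ nor $V_d$ for $d \leq 4$, the discussion following Equation~\eqref{eq:DefPV} shows that every summand $H_{V,K,I,\zeta}$ with $\zeta \neq \pm 1$ has pole order at most $D-5$ at $t = 1$, hence contributes nothing to the degree $3-D$ coefficient. Moreover, when all $d_k$ are even, every $a_{k,i}$ is even, whence $H_{V,K,I,-1} = H_{V,K,I,1}$ and we pick up the factor $\sigma_V = 2$; when at least one $d_k$ is odd and greater than one, or at least two $d_k$ are odd, then $H_{V,K,I,-1}$ has pole order at most $D-6$ and does not contribute; finally, when $d_1 = 1$ and all other $d_k$ are even, Equation~\eqref{eq:Case2TermNot1} shows $H_{V,K,I,-1}$ has pole of order only $D-4$, again too low to affect the degree $3-D$ coefficient. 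In all cases, the degree $3-D$ coefficient of the full sum equals $\sigma_V$ times the degree $3-D$ coefficient of $\sum_{(K,I)\in\Lambda} H_{V,K,I,1}(\bs{b}_\Theta,t)$.

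Next, I would Laurent-expand $H_{V,K,I,1}$ at $t = 1$ using the substitution $s = 1-t$ and the Taylor expansion
\[
    1 - t^\alpha = \alpha s\left[1 + \tfrac{1-\alpha}{2}\, s + O(s^2)\right].
\]
Applying this to the numerator with $\alpha = 2/b_{K,I}$ and to each of the $D-1$ denominator factors with $\alpha_{k,i} = (b_{K,I}-b_{k,i})/b_{K,I}$, and collecting $1/b_{K,I}^{D-1}$ from the denominator factors, one obtains
\[
    H_{V,K,I,1}(\bs{b}_\Theta,t) = \frac{2\, b_{K,I}^{D-3}}{\prod_{(k,i)\neq (K,I)}(b_{K,I}-b_{k,i})}\, s^{-(D-2)} \left[1 + \frac{b_{K,I} - 2 - \sum_{(k,i) \neq (K,I)} b_{k,i}}{2\, b_{K,I}}\, s + O(s^2)\right].
\]
The leading coefficient of $s^{-(D-2)}$ is precisely the quantity whose sum over $\Lambda$ is treated in Corollary~\ref{cor:FirstCoeffVanish}; it vanishes under the stated hypotheses, confirming that the Hilbert series has pole order exactly $D - 3$ and not higher. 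Reading off the coefficient of $s^{-(D-3)}$ then yields exactly
\[
    \frac{b_{K,I}^{D-4}\bigl(b_{K,I} - 2 - \sum_{(k,i)\neq (K,I)} b_{k,i}\bigr)}{\prod_{(k,i)\neq (K,I)}(b_{K,I}-b_{k,i})},
\]
which is the summand of the first expression in Equation~\eqref{eq:Gamma0First} once one expands the factor $b_{K,I}^{D-4}$.

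The equivalence of the two displayed forms in Equation~\eqref{eq:Gamma0First} is then a purely algebraic simplification via $\sum_{(k,i)\neq(K,I)} b_{k,i} = \sum_{(k,i)\in\Theta} b_{k,i} - b_{K,I}$, rewriting $b_{K,I} - 2 - \sum_{(k,i)\neq(K,I)} b_{k,i}$ as $2 b_{K,I} - 2 - \sum_{(k,i)\in\Theta} b_{k,i}$, and multiplying through by $\sigma_V$. The main obstacle is not the Laurent expansion itself, which is routine, but rather the bookkeeping in the case analysis: one must be careful that every term with $\zeta \neq 1$ (and every $\zeta = -1$ term outside the all-even case) genuinely has pole order strictly less than $D-3$ under our hypotheses, and equally careful that no genuine contributions are being silently absorbed into the prefactor $\sigma_V$. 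Once that verification is in hand, extracting the $s^{-(D-3)}$ coefficient from the above expansion is mechanical.
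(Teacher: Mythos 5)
Your proof is correct and follows essentially the same route as the paper: the same reduction via the case analysis of Section~\ref{subsec:LaurentCases} to $\sigma_V$ times the $\zeta=1$ contributions, followed by the same expansions of the numerator and of each denominator factor at $t=1$, merely packaged as a single multiplicative expansion in $s=1-t$ instead of the paper's Cauchy-product bookkeeping. One small caveat: the vanishing in Corollary~\ref{cor:FirstCoeffVanish} holds only after the limit $\bs{b}_\Theta\to\bs{a}_\Theta$, so your aside about the $s^{-(D-2)}$ coefficient should be read in that limit; this does not affect your extraction of the degree $3-D$ coefficient.
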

\begin{proof}
As explained in Section~\ref{subsec:LaurentCases}, the only terms that contribute to the degree $3-D$
coefficient are of the form given in Equation~\eqref{eq:TermZ1}
corresponding to $H_{V,K,I,1}(\bs{b}_\Theta,t)$ in each case and $H_{V,K,I,-1}(\bs{b}_\Theta,t)$ only if each
$d_k$ is even, yielding the $\sigma_V$ factor. The series expansion of the numerator $1 - t^{2/b_{K,I}}$ begins
\begin{align}
\label{eq:SeriesHolo}
    1 - t^{2/b_{K,I}}
        &=  \frac{2}{b_{K,I}} (1 - t) + \frac{b_{K,I} - 2}{b_{K,I}^2} (1 - t)^2
            + \frac{2(b_{K,I}^2 - 3b_{K,I} + 2)}{3b_{K,I}^3} (1 - t)^3
        \\ \nonumber &
            + \frac{3b_{K,I}^3 - 11b_{K,I}^2 + 12b_{K,I} - 4}{6b_{K,I}^4} (1 - t)^4
            + \cdots,
\end{align}
and each factor of the denominator has a series expansion beginning
\begin{align}
\label{eq:SeriesPole}
    \frac{1}{1 - t^{(b_{K,I} - b_{k,i})/b_{K,I}}}
        &=  \frac{b_{K,I}}{b_{K,I} - b_{k,i}} (1 - t)^{-1} - \frac{b_{k,i}}{2(b_{K,I} - b_{k,i})}
        \\ \nonumber &\quad\quad
            - \frac{(2 b_{K,I} - b_{k,i})b_{k,i}}{12(b_{K,I} - b_{k,i})b_{K,I} } (1 - t)
        \\ \nonumber &\quad\quad
            - \frac{(2 b_{K,I} - b_{k,i})b_{k,i}}{24(b_{K,I} - b_{k,i})b_{K,I} } (1 - t)^2
            + \cdots.
\end{align}
Hence, for fixed $K$ and $I$, the term of degree $3 - D$ comes from the Cauchy product of these factors
in two different ways. First, from the degree $1$ term of the holomorphic series in Equation~\eqref{eq:SeriesHolo},
the degree $-1$ term of the Laurent series in Equation~\eqref{eq:SeriesPole} for each value of $(k, i)$ except one
(say $(\kappa, \lambda)$), and the degree $0$ term of the Laurent series in Equation~\eqref{eq:SeriesPole} corresponding
to $(k,i) = (\kappa,\lambda)$. Second, from the degree $2$ term of the series in Equation~\eqref{eq:SeriesHolo}
and the degree $-1$ term of the Laurent series in Equation~\eqref{eq:SeriesPole} for each value of $(k, i)$.

The first of these combinations yields
\begin{align*}
    &\left( \frac{1}{b_{K,I}} \right)
    \left( \frac{2}{b_{K,I}} \right)
    \left( \prod\limits_{\substack{(k,i)\in\Theta\smallsetminus \\ \{(K,I), (\kappa,\lambda)\} }}
            \frac{b_{K,I}}{b_{K,I} - b_{k,i}} \right)
    \left( \frac{b_{\kappa,\lambda}}{2(b_{\kappa,\lambda} - b_{K,I})} \right)
    \\&\quad\quad=
    \frac{-b_{K,I}^{D-4} b_{\kappa,\lambda}}
        {\prod\limits_{\substack{(k,i)\in\Theta\smallsetminus \\ \{(K,I)\}}} (b_{K,I} - b_{k,i})},
\end{align*}
while the second yields
\[
    \left( \frac{1}{b_{K,I}} \right)
    \left( \frac{b_{K,I} - 2}{b_{K,I}^2} \right)
    \left( \prod\limits_{\substack{(k,i)\in\Theta\smallsetminus \\ \{(K,I)\}}}
            \frac{b_{K,I}}{b_{K,I} - b_{k,i}} \right)
    =
    \frac{b_{K,I}^{D-3} - 2 b_{K,I}^{D-4}}
        {\prod\limits_{\substack{(k,i)\in\Theta\smallsetminus \\ \{(K,I)\}}}
            (b_{K,I} - b_{k,i}) },
\]
completing the proof.
\end{proof}

Note that if $V = V_d$ is irreducible with $d \geq 5$, setting each $b_{1,i} = 2i - d$,
the expression in Equation~\eqref{eq:Gamma0First} reduces to
\[
    2\sigma_V \sum\limits_{I = \lfloor d/2\rfloor + 1}^d
        \frac{(2I - d)^{d-3} (2I - d - 1)}
            {\prod\limits_{\substack{i = 0 \\ i\neq I}}^d (2I - 2i)}.
\]
A simple computation demonstrates that this is equal to the expression in
Equation~\eqref{eq:Hilbert} given by Hilbert in \cite{Hilbert}.


\subsection{$\gamma_1$}
\label{subsec:LaurentGamma1First}

We now compute the second coefficient of the Laurent expansion of the expression
$\sum_{(K,I)\in\Lambda} \sum_{\zeta^{a_{K,I}} = 1} H_{V,K,I,\zeta}(\bs{b}_\Theta,t)$.

\begin{proposition}
\label{prop:Gamma1First}
Let $V = \bigoplus_{k=1}^r V_{d_k}$ be an $\SL_2$-representation with $V^{\SL_2} = \{0\}$,
and assume $V$ is not isomorphic to $2V_1$, $V_1+V_2$ nor $V_k$ for $k \leq 4$.
Let $\bs{a}_\Theta := \big(a_{k,i} : (k,i)\in\Theta\big)$, and let
$\bs{b}_\Theta := \big(b_{k,i} : (k,i)\in\Theta\big)$ where the $b_{k,i}$ are
real parameters.

If all $d_k$ are even, at least two $d_k$ are odd, or at least one odd $d_k > 1$, then the degree $4-D$ coefficient
of the Laurent series of $\sum_{(K,I)\in\Lambda} \sum_{\zeta^{a_{K,I}} = 1} H_{V,K,I,\zeta}(\bs{b}_\Theta,t)$ is given by
\begin{align}
\label{eq:Gamma1FirstCase1}
    &\sigma_V\sum\limits_{(K,I)\in\Lambda}
        \frac{b_{K,I}^{D-5}}
        {\prod\limits_{\substack{(k,i)\in\Theta\smallsetminus \\ \{(K,I)\}}} (b_{K,I} - b_{k,i})}
        \left(
        \vphantom{\sum\limits_{\substack{(\kappa^\prime,\lambda^\prime)\in\Theta\smallsetminus
                            \\ \{(K,I),(\kappa,\lambda)\}}} b_{\kappa^\prime,\lambda^\prime}}
        \frac{2}{3} (b_{K,I}^2 - 3b_{K,I} + 2)
                \right.
                \\ \nonumber &\quad\quad\quad\quad\quad\quad + \left.
                \sum\limits_{\substack{(\kappa,\lambda)\in\Theta\smallsetminus \\ \{(K,I)\}}}
                    \frac{b_{\kappa,\lambda}}{6} \left(
                        b_{\kappa,\lambda} - 5b_{K,I} + 6 + 3
                    \sum\limits_{\substack{(\kappa^\prime,\lambda^\prime)\in\Theta\smallsetminus
                            \\ \{(K,I),(\kappa,\lambda)\}}} b_{\kappa^\prime,\lambda^\prime}
        \right)\right).
\end{align}

If $d_1 = 1$ and all other $d_k$ are even, then the degree $4-D$ coefficient of the
Laurent series is given by
\begin{align}
    \label{eq:Gamma1FirstCase2}
    &\sum\limits_{(K,I)\in\Lambda}
         {\prod\limits_{\substack{(k,i)\in\Theta\smallsetminus \\ \{(K,I)\}}} (b_{K,I} - b_{k,i})}
        \left(
        \vphantom{\sum\limits_{\substack{(\kappa^\prime,\lambda^\prime)\in\Theta\smallsetminus
                            \\ \{(K,I),(\kappa,\lambda)\}}} b_{\kappa^\prime,\lambda^\prime}}
        \frac{2}{3} (b_{K,I}^2 - 3b_{K,I} + 2)
                \right.
                \\ \nonumber
                &\quad\quad\quad\quad\quad\quad + \left.
                \sum\limits_{\substack{(\kappa,\lambda)\in\Theta\smallsetminus \\ \{(K,I)\}}}
                    \frac{b_{\kappa,\lambda}}{6} \left(
                        b_{\kappa,\lambda} - 5b_{K,I} + 6 + 3
                    \sum\limits_{\substack{(\kappa^\prime,\lambda^\prime)\in\Theta\smallsetminus
                            \\ \{(K,I),(\kappa,\lambda)\}}} b_{\kappa^\prime,\lambda^\prime}
        \right)\right)
    \\ \nonumber &\quad\quad\quad\quad\quad\quad\quad
        +   \sum\limits_{\substack{(K,I)\in\Lambda\smallsetminus \\ \{ (1,0), (1,1) \} }}
            \frac{ b_{K,I}^{D-5} }
                { 2 \prod\limits_{\substack{(k,i)\in\Theta\smallsetminus \\ \{ (K,I), (1,0), (1,1) \} }}
                    (b_{K,I}-b_{k,i})}.
\end{align}
\end{proposition}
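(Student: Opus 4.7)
The plan is to follow the strategy of Proposition~\ref{prop:Gamma0First}, extending the Cauchy-product computation by one more degree in $(1-t)$. By the case analysis of Section~\ref{subsec:LaurentCases}, under the stated hypotheses only the $\zeta = \pm 1$ terms in Equation~\eqref{eq:DefH2} can contribute to the degree $4-D$ coefficient: when all $d_k$ are even, the terms $H_{V,K,I,\pm 1}$ coincide and produce the $\sigma_V$ factor; when at least two $d_k$ are odd or at least one odd $d_k > 1$, only $\zeta = 1$ contributes; and in the remaining case where $d_1 = 1$ with all other $d_k$ even, the terms with $\zeta = -1$ for $K \ge 2$ take the reduced form of Equation~\eqref{eq:Case2TermNot1} and must be handled separately.

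For each contributing term $H_{V,K,I,1}(\bs{b}_\Theta, t)$ given by Equation~\eqref{eq:TermZ1}, I would expand via the Cauchy product of the numerator series from Equation~\eqref{eq:SeriesHolo} with the Laurent series of each of the $D-1$ denominator factors from Equation~\eqref{eq:SeriesPole}. Since the numerator starts at $(1-t)^1$ and each denominator starts at $(1-t)^{-1}$, the combinations of factor degrees summing to $4-D$ are exactly four: (a) numerator at $(1-t)^3$ with all denominators at $(1-t)^{-1}$; (b) numerator at $(1-t)^2$ with one denominator at $(1-t)^0$ and the rest at $(1-t)^{-1}$; (c) numerator at $(1-t)^1$ with one denominator at $(1-t)^1$ and the rest at $(1-t)^{-1}$; and (d) numerator at $(1-t)^1$ with two denominators at $(1-t)^0$ and the rest at $(1-t)^{-1}$. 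Each combination produces a contribution of the form $b_{K,I}^{D-5}/\prod(b_{K,I} - b_{k,i})$ times an explicit polynomial in $b_{K,I}$ and the $b_{k,i}$; combinations (b) and (c) yield linear sums over $\Theta \smallsetminus \{(K,I)\}$, while (d) yields an unordered-pair sum. Summing the four contributions over $(K,I) \in \Lambda$ and incorporating the $\sigma_V$ factor produces Equation~\eqref{eq:Gamma1FirstCase1} after algebraic rearrangement.

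For the case $d_1 = 1$ with all other $d_k$ even, the terms $H_{V,K,I,-1}$ with $K \ge 2$ in Equation~\eqref{eq:Case2TermNot1} have their factors indexed by $(1,0)$ and $(1,1)$ of the form $(1 + t^{(b_{K,I} - b_{1,i})/b_{K,I}})$, each of which evaluates to $2$ at $t = 1$. Thus such a term has a pole of order only $D - 4$, so its contribution to the degree $4 - D$ coefficient comes from the single leading combination: the numerator at $(1-t)^1$ paired with each of the remaining $D-3$ denominator factors at $(1-t)^{-1}$, weighted by the constant $1/4$ arising from the two $(1,i)$ factors at $t = 1$. This produces the additional summand in Equation~\eqref{eq:Gamma1FirstCase2}. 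The main computational obstacle is the combinatorial bookkeeping in combination (d), where a sum over unordered pairs of indices must be tracked carefully to yield the doubly-indexed term in the final expression; otherwise the argument is a direct extension of Proposition~\ref{prop:Gamma0First}.
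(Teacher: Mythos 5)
Your proposal follows the paper's own proof essentially verbatim: the same reduction to the $\zeta=\pm1$ terms via the case analysis of Section~\ref{subsec:LaurentCases}, the same four Cauchy-product combinations of the expansions \eqref{eq:SeriesHolo} and \eqref{eq:SeriesPole} producing the degree $4-D$ coefficient (your (a)--(d) are the paper's (4),(3),(2),(1)), and the same treatment of the $d_1=1$ case, where the $\zeta=-1$, $K\ge2$ terms of \eqref{eq:Case2TermNot1} have pole order only $D-4$ so that a single leading combination with the factor $1/4$ from the two holomorphic factors yields the extra summand in \eqref{eq:Gamma1FirstCase2}. The plan is correct, including your caution about the unordered-pair bookkeeping in combination (d), so nothing further is needed.
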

\begin{proof}
We first consider the case where all $d_k$ are even, at least two $d_k$ are odd or at least one $d_k > 1$ is odd.
Once again, it was explained in Section~\ref{subsec:LaurentCases} that the only contributing terms are of the form
given in Equation~\eqref{eq:TermZ1}. To compute the degree $4 - D$ coefficient, we consider the factors of the term
in Equation~\eqref{eq:TermZ1} and use the expansions given by Equations~\eqref{eq:SeriesHolo} and \eqref{eq:SeriesPole}.
A term of degree $4 - D$ can arise from the Cauchy product formula from these factors in one of four ways:
\begin{enumerate}
\item   The degree $1$ term from the numerator, the degree $-1$ term from each factor of the
        denominator except two, say $(\kappa,\lambda), (\kappa^\prime,\lambda^\prime)\neq (K,I)$,
        and the degree $0$ term from the factors of the denominator corresponding to $(\kappa,\lambda)$ and
        $(\kappa^\prime,\lambda^\prime)$;

\item   The degree $1$ term from the numerator, the degree $-1$ term from each factor of the denominator except
        one, say $(\kappa,\lambda)\neq (K,I)$, and the degree $1$ term from the factor of the denominator corresponding
        to $(\kappa,\lambda)$;

\item   The degree $2$ term from the numerator, the degree $-1$ term from each factor of the denominator except
        one, say $(\kappa,\lambda)\neq (K,I)$, and the degree $0$ term from the factor of the denominator corresponding
        to $(\kappa,\lambda)$; and

\item   The degree $3$ term from the numerator and the degree $-1$ term from each factor of the denominator.
\end{enumerate}

Computing as in the proof of Proposition~\ref{prop:Gamma0First}, the case (1) yields
\[
    \frac{b_{K,I}^{D-5} b_{\kappa,\lambda}b_{\kappa^\prime,\lambda^\prime}}
        {2\prod\limits_{\substack{(k,i)\in\Theta\smallsetminus \\ \{ (K,I)\} }} (b_{K,I} - b_{k,i})}.
\]
In (2), we have
\[
    \frac{- b_{K,I}^{D-5} (2b_{K,I} - b_{\kappa,\lambda}) b_{\kappa,\lambda}}
        {6\prod\limits_{\substack{(k,i)\in\Theta\smallsetminus \\ \{ (K,I)\} }} (b_{K,I} - b_{k,i})}.
\]
For (3),
\[
    \frac{- b_{K,I}^{D-5} (b_{K,I} - 2)b_{\kappa,\lambda} }
        {2\prod\limits_{\substack{(k,i)\in\Theta\smallsetminus \\ \{ (K,I)\} }} (b_{K,I} - b_{k,i})},
\]
and (4) yields
\[
    \frac{2 b_{K,I}^{D-5} (b_{K,I}^2 - 3b_{K,I} + 2) }
        {3\prod\limits_{\substack{(k,i)\in\Theta\smallsetminus \\ \{ (K,I)\} }} (b_{K,I} - b_{k,i})}.
\]
Combining these and summing over $(K,I)\in\Lambda$ completes the proof of Equation~\eqref{eq:Gamma1FirstCase1}.

We now assume $d_1 = 1$ and each $d_k$ for $k > 1$ is even, and then $\sigma_V = 1$. The terms with $\zeta = 1$ are
identical to the previous case, while terms with $K > 1$ and $\zeta = -1$ are of the form giving in
Equation~\eqref{eq:Case2TermNot1}. The degree $4-D$ coefficient of the Laurent series of such a latter
term is given by
\[
    \left(
    \frac{1}{ 4 b_{K,I} }
    \right)
    \left(
    \frac{2}{ b_{K,I} }
    \right)
    \left(
    \prod\limits_{\substack{(k,i)\in\Theta\smallsetminus \\ \{ (K,I), (1,0), (1,1) \} }}
        \frac{ b_{K,I} }{ b_{K,I}-b_{k,i} }
    \right)
    =
    \frac{ b_{K,I}^{D-5} }
        { 2 \prod\limits_{\substack{(k,i)\in\Theta\smallsetminus \\ \{ (K,I), (1,0), (1,1) \} }}
            (b_{K,I}-b_{k,i})}.
\]
Summing over all $(K,I)$ with $K > 1$ yields Equation~\eqref{eq:Gamma1FirstCase2}.
\end{proof}


\subsection{$\gamma_2$}
\label{subsec:LaurentGamma2First}

In this section, we turn to the computation of the degree $5-D$ coefficient of the Laurent expansion.

\begin{proposition}
\label{prop:Gamma2First}
Let $V = \bigoplus_{k=1}^r V_{d_k}$ be an $\SL_2$-representation with $V^{\SL_2} = \{0\}$,
and assume $V$ is not isomorphic to $V_d$ for $d=1,2,3,4,5,6,8$, $2V_1$, $V_1+V_2$, $V_1+V_3$, $V_1+V_4$,
$2V_2$, $V_2+V_3$, $V_2+V_4$, $2V_3$, nor $2V_4$. Let $\bs{a}_\Theta := \big(a_{k,i} : (k,i)\in\Theta\big)$, and let
$\bs{b}_\Theta := \big(b_{k,i} : (k,i)\in\Theta\big)$ where the $b_{k,i}$ are
real parameters.

If all $d_k$ are even, at least two $d_k$ are odd, or at least one odd $d_k > 1$, then the degree $5-D$ coefficient
of the Laurent series of $\sum_{(K,I)\in\Lambda} \sum_{\zeta^{a_{K,I}} = 1} H_{V,K,I,\zeta}(\bs{b}_\Theta,t)$ is given by
\begin{align}
    \label{eq:Gamma2FirstCase1}
    &\sigma_V\sum\limits_{(K,I)\in\Lambda}
        \frac{b_{K,I}^{D-6}}
        {24 \prod\limits_{\substack{(k,i)\in\Theta\smallsetminus \\ \{ (K,I)\} }} (b_{K,I} - b_{k,i})}
        \bigg(  12b_{K,I}^3 - 44b_{K,I}^2 + 48b_{K,I} - 16
                \\ \nonumber &\quad\quad\quad +
                \sum\limits_{\substack{(\kappa,\lambda)\in\Theta\smallsetminus \\ \{ (K,I)\} }} b_{\kappa,\lambda} \Big(
                    - 16b_{K,I}^2 + 32 b_{K,I} - 16
                        - 4 b_{\kappa,\lambda} + 4 b_{K,I} b_{\kappa,\lambda}
                \\ \nonumber &\quad\quad\quad
                    + \sum\limits_{\substack{(\kappa^\prime,\lambda^\prime)\in\Theta\smallsetminus
                            \\ \{(K,I),(\kappa,\lambda)\}}} b_{\kappa^\prime,\lambda^\prime} \Big(
                        7 b_{K,I}
                        - 6
                        - 2 b_{\kappa,\lambda}
                        -  \sum\limits_{\substack{(\kappa^{\prime\prime},\lambda^{\prime\prime})\in\Theta\smallsetminus
                            \\ \{(K,I),(\kappa,\lambda),(\kappa^{\prime},\lambda^{\prime})\}}}
                            b_{\kappa^{\prime\prime},\lambda^{\prime\prime}}
        \Big) \Big)\bigg).
\end{align}

If $d_1 = 1$ and all other $d_k$ are even, then the degree $5-D$ coefficient of the
Laurent series is given by
\begin{align}
    \label{eq:Gamma2FirstCase2}
&\sum\limits_{(K,I)\in\Lambda}
        \frac{b_{K,I}^{D-6}}
        {24 \prod\limits_{\substack{(k,i)\in\Theta\smallsetminus \\ \{ (K,I)\} }} (b_{K,I} - b_{k,i})}
        \bigg(  12b_{K,I}^3 - 44b_{K,I}^2 + 48b_{K,I} - 16
                \\ \nonumber &\quad\quad\quad +
                \sum\limits_{\substack{(\kappa,\lambda)\in\Theta\smallsetminus \\ \{ (K,I)\} }} b_{\kappa,\lambda} \Big(
                    - 16b_{K,I}^2 + 32 b_{K,I} - 16
                        - 4 b_{\kappa,\lambda} + 4 b_{K,I} b_{\kappa,\lambda}
                \\ \nonumber &\quad\quad\quad
                    + \sum\limits_{\substack{(\kappa^\prime,\lambda^\prime)\in\Theta\smallsetminus
                            \\ \{(K,I),(\kappa,\lambda)\}}} b_{\kappa^\prime,\lambda^\prime} \Big(
                        7 b_{K,I}
                        - 6
                        - 2 b_{\kappa,\lambda}
                        -  \sum\limits_{\substack{(\kappa^{\prime\prime},\lambda^{\prime\prime})\in\Theta\smallsetminus
                            \\ \{(K,I),(\kappa,\lambda),(\kappa^{\prime},\lambda^{\prime})\}}}
                            b_{\kappa^{\prime\prime},\lambda^{\prime\prime}}
        \Big) \Big)\bigg)
        \\ \nonumber &
        +
        \sum\limits_{\substack{(K,I)\in\Lambda\smallsetminus \\ \{ (1,0), (1,1) \} }}
        \frac{b_{K,I}^{D-6}}
            {4\prod\limits_{\substack{(k,i)\in\Theta\smallsetminus \\ \{ (K,I), (1,0), (1,1) \} }} (b_{K,I} - b_{k,i})}
            \Big( 3 b_{K,I} - b_{1,0} - b_{1,1} - 2
        \\ \nonumber &\quad\quad\quad
                - \sum\limits_{\substack{(\kappa,\lambda)\in\Theta\smallsetminus \\ \{ (K,I), (1,0), (1,1) \} }}
                b_{\kappa,\lambda} \Big).
\end{align}
\end{proposition}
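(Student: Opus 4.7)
The plan is to follow the template of Propositions \ref{prop:Gamma0First} and \ref{prop:Gamma1First}, extracting the degree $5-D$ coefficient from the Cauchy product of the Laurent expansions of the factors of each $H_{V,K,I,\zeta}(\bs{b}_\Theta, t)$ that contributes. By the case analysis in Section \ref{subsec:LaurentCases}, under the stated exclusions only terms with $\zeta = 1$ contribute when all $d_k$ are even, at least two $d_k$ are odd, or at least one odd $d_k > 1$; in the all-even case, the $\zeta = -1$ terms coincide with the $\zeta = 1$ terms and produce the $\sigma_V$ factor. The case $d_1 = 1$ with all other $d_k$ even requires an additional contribution from $H_{V,K,I,-1}(\bs{b}_\Theta, t)$ for $K > 1$.

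For the $\zeta = 1$ contribution, I would use Equation \eqref{eq:SeriesHolo} for the expansion of the numerator $1 - t^{2/b_{K,I}}$ (beginning in degree $1$) and Equation \eqref{eq:SeriesPole} for the expansion of each of the $D-1$ denominator factors (beginning in degree $-1$). Writing $n \ge 1$ for the numerator's contributing degree and $-1 + g_j$ with $g_j \ge 0$ for each denominator factor's, a term of total degree $5 - D$ in the Cauchy product requires $n + \sum_j g_j = 4$, which leaves seven sub-cases indexed by $(n, \{g_j > 0\})$:
\[
    (1,\{3\}),\; (1,\{2,1\}),\; (1,\{1,1,1\}),\; (2,\{2\}),\; (2,\{1,1\}),\; (3,\{1\}),\; (4,\emptyset).
\]
For each sub-case I compute the corresponding product of coefficients and sum over the combinatorial choices of which denominator factors are elevated, exactly as in cases (1)--(4) of the proof of Proposition \ref{prop:Gamma1First}. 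Note that the expansions provided in Equations \eqref{eq:SeriesHolo} and \eqref{eq:SeriesPole} run to precisely the orders needed ($(1-t)^4$ and $(1-t)^2$, respectively). Collecting the seven contributions over the common denominator $24 \prod_{(k,i) \neq (K,I)}(b_{K,I} - b_{k,i})$ and factoring $b_{K,I}^{D-6}$ should yield, after simplification, the nested polynomial in Equation \eqref{eq:Gamma2FirstCase1}.

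For the $d_1 = 1$ case, the $\zeta = -1$ terms have the form of Equation \eqref{eq:Case2TermNot1} with pole order $D - 4$ at $t = 1$, so only a single-degree shift above the leading Laurent term is required. I would expand the two regular factors $(1 + t^{(b_{K,I} - b_{1,i})/b_{K,I}})^{-1}$ ($i = 0, 1$) around $t = 1$ by a short geometric-series computation (yielding $\tfrac{1}{2} + \tfrac{b_{K,I} - b_{1,i}}{4 b_{K,I}}(1-t) + \cdots$), and use Equation \eqref{eq:SeriesPole} for the remaining $D - 3$ pole factors. The three possible single-degree shifts, namely raising the numerator to degree $2$, raising one of the two regular factors to degree $1$, or raising one of the pole factors to degree $0$, then combine in the same way as for cases (1)--(3) of Proposition \ref{prop:Gamma1First} (but one order lower in the pole) to produce the second summation in Equation \eqref{eq:Gamma2FirstCase2}.

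The main obstacle is the algebraic bookkeeping of the seven sub-cases, particularly those like $(1,\{1,1,1\})$ and $(1,\{2,1\})$ that involve sums over unordered triples or ordered pairs of indices in $\Theta \smallsetminus \{(K,I)\}$. Care is needed to reconcile ordered versus unordered summation conventions and to verify that the aggregate polynomial in $b_{K,I}$ and the $b_{\kappa,\lambda}$'s condenses into the nested sum structure displayed in Equation \eqref{eq:Gamma2FirstCase1}; the remaining checks, that the pole orders are as claimed and that all $\zeta \neq \pm 1$ terms are negligible in degree $5-D$ under the stated exclusions, are handled by the case analysis of Section \ref{subsec:LaurentCases}.
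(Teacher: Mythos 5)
Your proposal follows essentially the same route as the paper's proof: the same seven Cauchy-product sub-cases (matching the paper's cases (1)--(7)) built from Equations~\eqref{eq:SeriesHolo} and \eqref{eq:SeriesPole} for the $\zeta=1$ terms, and the same three single-shift contributions from Equation~\eqref{eq:Case2TermNot1} (with the correct expansion $\tfrac12+\tfrac{b_{K,I}-b_{1,i}}{4b_{K,I}}(1-t)+\cdots$ of the regular factors) in the $d_1=1$ case, with the symmetry-factor corrections ($3!$ and $2!$ for the unordered triple and pair sub-cases) being exactly the bookkeeping the paper performs. The plan is correct and complete in outline, differing from the paper only in that the explicit coefficient computations are left to be carried out.
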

\begin{proof}
First assume at least two $d_k$ or at least one $d_k > 1$ is odd. Again based on the observations of
Section~\ref{subsec:LaurentCases}, we need only consider the terms given in Equation~\eqref{eq:TermZ1}.
A term of degree $5-D$ can arise from the Cauchy product formula from these factors in one of seven ways:
\begin{enumerate}
\item   The degree $1$ term from the numerator, the degree $-1$ term from each factor of the
        denominator except three, say
        $(\kappa,\lambda), (\kappa^\prime,\lambda^\prime), (\kappa^{\prime\prime},\lambda^{\prime\prime}) \neq (K,I)$,
        and the degree $0$ term from the factors of the denominator corresponding to $(\kappa,\lambda)$,
        $(\kappa^\prime,\lambda^\prime)$, and $(\kappa^{\prime\prime}, \lambda^{\prime\prime})$;

\item   The degree $1$ term from the numerator, the degree $-1$ term from each factor of the denominator except
        two, say $(\kappa,\lambda), (\kappa^\prime,\lambda^\prime) \neq (K,I)$, the degree $1$ term from the factor
        of the denominator corresponding to $(\kappa,\lambda)$, and the degree $0$ term from the factor
        of the denominator corresponding to $(\kappa^\prime,\lambda^\prime)$;

\item   The degree $1$ term from the numerator, the degree $-1$ term from each factor of the denominator except
        one, say $(\kappa,\lambda) \neq (K,I)$, and the degree $2$ term from the factor
        of the denominator corresponding to $(\kappa,\lambda)$;

\item   The degree $2$ term from the numerator, the degree $-1$ term from each factor of the denominator except
        two, say $(\kappa,\lambda), (\kappa^\prime,\lambda^\prime) \neq (K,I)$, and the degree $0$ terms from the factors
        of the denominator corresponding to $(\kappa,\lambda)$ and $(\kappa^\prime,\lambda^\prime)$;

\item   The degree $2$ term from the numerator, the degree $-1$ term from each factor of the denominator except
        one, say $(\kappa,\lambda) \neq (K,I)$, and the degree $1$ term from the factor
        of the denominator corresponding to $(\kappa,\lambda)$;

\item   The degree $3$ term from the numerator, the degree $-1$ term from each factor of the denominator except
        one, say $(\kappa,\lambda) \neq (K,I)$, and the degree $0$ term from the factor
        of the denominator corresponding to $(\kappa,\lambda)$;

\item   The degree $4$ term from the numerator and the degree $-1$ term from each factor of the denominator.
\end{enumerate}

In the case of (1), we compute
\[
    \frac{-b_{K,I}^{D-6} b_{\kappa,\lambda}b_{\kappa^\prime,\lambda^\prime}b_{\kappa^{\prime\prime},\lambda^{\prime\prime}}}
        {4\prod\limits_{\substack{(k,i)\in\Theta\smallsetminus \\ \{ (K,I)\} }} (b_{K,I} - b_{k,i})}.
\]
For (2), we have
\[
    \frac{b_{K,I}^{D-6} (2 b_{K,I} - b_{\kappa,\lambda}) b_{\kappa,\lambda}b_{\kappa^\prime,\lambda^\prime}}
        {12\prod\limits_{\substack{(k,i)\in\Theta\smallsetminus \\ \{ (K,I)\} }} (b_{K,I} - b_{k,i})}.
\]
For (3),
\[
    \frac{-b_{K,I}^{D-5} (2 b_{K,I} - b_{\kappa,\lambda}) b_{\kappa,\lambda}}
        {12\prod\limits_{\substack{(k,i)\in\Theta\smallsetminus \\ \{ (K,I)\} }} (b_{K,I} - b_{k,i})}.
\]
For (4),
\[
    \frac{b_{K,I}^{D-6} (b_{K,I} - 2) b_{\kappa,\lambda}b_{\kappa^\prime,\lambda^\prime}}
        {4\prod\limits_{\substack{(k,i)\in\Theta\smallsetminus \\ \{ (K,I)\} }} (b_{K,I} - b_{k,i})}.
\]
For (5),
\[
    \frac{-b_{K,I}^{D-6} (b_{K,I} - 2)(2 b_{K,I} - b_{\kappa,\lambda}) b_{\kappa,\lambda}}
        {12\prod\limits_{\substack{(k,i)\in\Theta\smallsetminus \\ \{ (K,I)\} }} (b_{K,I} - b_{k,i})}.
\]
For (6),
\[
    \frac{-b_{K,I}^{D-6} (b_{K,I}^2 - 3b_{K,I} + 2) b_{\kappa,\lambda}}
        {3\prod\limits_{\substack{(k,i)\in\Theta\smallsetminus \\ \{ (K,I)\} }} (b_{K,I} - b_{k,i})}.
\]
Finally, for (7),
\[
    \frac{b_{K,I}^{D-6} (3b_{K,I}^3 - 11b_{K,I}^2 + 12b_{K,I} - 4) }
        {6\prod\limits_{\substack{(k,i)\in\Theta\smallsetminus \\ \{ (K,I)\} }} (b_{K,I} - b_{k,i})}.
\]
Summing over all $(K,I)$, $(\kappa,\lambda)$, $(\kappa^\prime,\lambda^\prime)$ and $(\kappa^{\prime\prime},\lambda^{\prime\prime})$, we divide
(1) and (4) by $3!$ and $2!$, respectively, to account for the same choices appearing more than once
in the sum. If all $d_k$ are even, each $H_{V,K,I,1}(\bs{b}_\Theta,t) = H_{V,K,I,-1}(\bs{b}_\Theta,t)$, indicating
the $\sigma_V$ prefactor. This completes the proof of Equation~\eqref{eq:Gamma2FirstCase1}.

When $d_1 = 1$ and all other $d_i$ are even, $\sigma_V = 1$, and $H_{V,K,I,-1}(\bs{b}_\Theta, t)$ is given in
Equation~\eqref{eq:Case2TermNot1}
and has pole order $4-D$. A term of degree $5-D$ can arise from the Cauchy product formula from these factors in one
of three ways:
\begin{enumerate}
\item   The degree $1$ term from the numerator,
        the degree $0$ term from each holomorphic factor of the denominator,
        the degree $-1$ term from each singular factor of the
        denominator except one, say $(\kappa,\lambda) \neq (K,I)$, and the degree $0$ term from
        the factor of the denominator corresponding to $(\kappa,\lambda)$;
\item   The degree $2$ term from the numerator, the degree $0$ term from each holomorphic factor of the denominator,
        and the degree $-1$ term from each singular factor of the denominator;
\item   The degree $1$ term from the numerator, the degree $0$ term from one holomorphic factor of the denominator
        the degree $1$ term from the other holomorphic factor of the denominator, and the degree $-1$ term from each
        singular factor of the denominator.
\end{enumerate}
In the case of (1), we compute
\[
    \frac{- b_{K,I}^{D-6} b_{\kappa,\lambda}}
        {4\prod\limits_{\substack{(k,i)\in\Theta\smallsetminus \\ \{ (K,I), (1,0), (1,1) \} }} (b_{K,I} - b_{k,i})},
    \quad\quad\quad (\kappa > 1).
\]
For (2),
\[
    \frac{b_{K,I}^{D-6}(b_{K,I} - 2)}
        {4\prod\limits_{\substack{(k,i)\in\Theta\smallsetminus \\ \{ (K,I), (1,0), (1,1) \} }} (b_{K,I} - b_{k,i})}.
\]
For (3), we have the two terms,
\[
    \frac{b_{K,I}^{D-6}(b_{K,I} - b_{1,0})}
        {4\prod\limits_{\substack{(k,i)\in\Theta\smallsetminus \\ \{ (K,I), (1,0), (1,1) \} }} (b_{K,I} - b_{k,i})}
\]
and
\[
    \frac{b_{K,I}^{D-6}(b_{K,I} - b_{1,1})}
        {4\prod\limits_{\substack{(k,i)\in\Theta\smallsetminus \\ \{ (K,I), (1,0), (1,1) \} }} (b_{K,I} - b_{k,i})}.
\]
Summing these and adding them to the contributions of $H_{V,K,I,1}(\bs{b}_\Theta,t)$ described in
Equation~\eqref{eq:Gamma2FirstCase1} completes the proof of Equation~\eqref{eq:Gamma2FirstCase2}.
\end{proof}


\section{The Coefficients of the Laurent Expansion in Terms of Schur polynomials}
\label{sec:LaurentSchur}

In this section, we use Propositions~\ref{prop:Gamma0First}, \ref{prop:Gamma1First} and \ref{prop:Gamma2First} to give
explicit formulas for the $\gamma_m$ in terms of Schur polynomials in the variables $a_{k,i}$.
First, let us briefly recall the definition of Schur polynomials for the convenience of the
reader.

Recall that if
$\rho = (\rho_1,\ldots,\rho_n) \in \Z^n$ is an integer partition, i.e. $\rho_1\geq\rho_2\geq\cdots\geq \rho_n\geq 0$,
then the \emph{alternant} associated to $\rho$ in the variables $\bs{x} = (x_1,\ldots,x_n)$ is defined by
\[
    A_\rho(\bs{x}) = \det\left( x_i^{ \rho_j } \right).
\]
The alternant is an alternating polynomial in the $x_i$ and hence divisible by the Vandermonde determinant
\[
    A_\delta(\bs{x}) = \prod\limits_{1\leq i < j\leq n} (x_i - x_j),
\]
where $\delta = (n-1,n-2,\ldots,1,0)$. The \emph{Schur polynomial} associated to $\rho$ in the variables $x_i$
is defined to be
\begin{equation}
\label{eq:SchurDef}
    s_\rho(\bs{x}) :=   \frac{ A_{\delta+\rho}( \bs{x} ) }{ A_\delta ( \bs{x} ) }.
\end{equation}

\begin{remark}
\label{rem:NonStandScur}
For simplicity, we will sometimes refer to the Schur polynomial associated to $\rho\in\Z^n$ that fail to
be partitions in the sense that $\rho_1 < \rho_2$. In these cases, we mean the polynomial (or
Laurent polynomial if $\rho_1 < 0$) defined in the same way by Equation~\eqref{eq:SchurDef}. Note that
the alternant $A_{\delta+\rho}(\bs{x})$ is still alternating so that $s_\rho(\bs{x})$ is a symmetric
(Laurent) polynomial; however, such a polynomial may be zero for nontrivial $\rho$.
\end{remark}

It is easy to see that the expressions in Equations~\eqref{eq:Gamma0First},
\eqref{eq:Gamma1FirstCase1}, \eqref{eq:Gamma1FirstCase2}, \eqref{eq:Gamma2FirstCase1},
and \eqref{eq:Gamma2FirstCase2} can be broken down into linear combinations
of sums of the form
\begin{align}
\label{eq:GenericSchurSumRSTU}
    &\Sigma_{R,S,T,U}(\bs{b}_\Theta)
    \\ \nonumber &\quad\quad =
    \sum\limits_{(K,I) \in \Lambda} \sum\limits_{\substack{(\kappa,\lambda)\in\Theta\smallsetminus \\ \{(K,I)\}}}
        \sum\limits_{\substack{(\kappa^\prime,\lambda^\prime)\in\Theta\smallsetminus \\ \{(K,I),(\kappa,\lambda)\}}}
        \sum\limits_{\substack{(\kappa^{\prime\prime},\lambda^{\prime\prime})\in\Theta\smallsetminus \\
                \{(K,I),(\kappa,\lambda),(\kappa^\prime,\lambda^\prime)\}}}
            \frac{b_{K,I}^{R} b_{\kappa,\lambda}^S b_{\kappa^\prime,\lambda^\prime}^T
                    b_{\kappa^{\prime\prime},\lambda^{\prime\prime}}^U}
                {\prod\limits_{\substack{(k,i)\in\Theta\smallsetminus \\ \{ (K,I)\} }}
                    (b_{K,I} - b_{k,i}) },
\end{align}
\begin{align}
\label{eq:GenericSchurSumRST}
    \Sigma_{R,S,T}(\bs{b}_\Theta)
        &:=
        \sum\limits_{(K,I) \in \Lambda} \sum\limits_{\substack{(\kappa,\lambda)\in\Theta\smallsetminus \\ \{(K,I)\}}}
        \sum\limits_{\substack{(\kappa^\prime,\lambda^\prime)\in\Theta\smallsetminus \\ \{(K,I),(\kappa,\lambda)\}}}
            \frac{b_{K,I}^{R} b_{\kappa,\lambda}^S b_{\kappa^\prime,\lambda^\prime}^T }
                {\prod\limits_{\substack{(k,i)\in\Theta\smallsetminus \\ \{ (K,I)\} }}
                    (b_{K,I} - b_{k,i}) },
    \\
\label{eq:GenericSchurSumRS}
    \Sigma_{R,S}(\bs{b}_\Theta)
        &:=
        \sum\limits_{(K,I) \in \Lambda} \sum\limits_{\substack{(\kappa,\lambda)\in\Theta\smallsetminus \\ \{(K,I)\}}}
            \frac{b_{K,I}^{R} b_{\kappa,\lambda}^S }
                {\prod\limits_{\substack{(k,i)\in\Theta\smallsetminus \\ \{ (K,I)\} }}
                    (b_{K,I} - b_{k,i}) },
    \\
\label{eq:GenericSchurSumR}
    \Sigma_{R}(\bs{b}_\Theta)
        &:=
        \sum\limits_{(K,I) \in \Lambda}
            \frac{ b_{K,I}^{R} }
                {\prod\limits_{\substack{(k,i)\in\Theta\smallsetminus \\ \{ (K,I)\} }}
                    (b_{K,I} - b_{k,i}) },
\end{align}
for integers $R$, $S$, $T$, and $U$. Hence, we will first indicate how such sums can be
expressed in terms of Schur polynomials.

We give $\Theta$ the lexicographic ordering so that
$(k,i)\leq(k^\prime,i^\prime)$ if $k < k^\prime$ or $k = k^\prime$ and $i \leq i^\prime$. This gives a total
ordering on $\Theta$ and hence $\Lambda$, and we use $\iota(k,i)$ to denote the position of $(k,i)$
with respect to this ordering. That is, $\iota(k,i) = 1$ for the first element of $\Lambda$,
$\iota(k,i) = 2$ for the second, etc.

We define $P_S$ to be the power sum of degree $S$ in the $D$ variables $\bs{b}_\Theta = ( b_{k,i} : (k,i)\in\Theta)$,
i.e.
\[
    P_S :=  P_S(\bs{b}_\Theta) := \sum\limits_{(k,i)\in\Theta} b_{k,i}^S.
\]
Recall that $C=|\Lambda|$, and that $e$ denotes the number of $k$ such that $d_k$ is even.
For brevity, we will use $s_M := s_M(\bs{b}) :=  s_{M,C-2,C-3, \ldots, 1, 0}(\bs{b})$ to denote the Schur polynomial of this form
in the $C$ variables $\bs{b} := \big(b_{k,i} : (k,i)\in\Lambda\big)$.

\begin{lemma}
\label{lem:GeneralSchur}
Let $V = \bigoplus_{k=1}^r V_{d_k}$ be an $\SL_2$-representation with $V^{\SL_2} = \{0\}$.
Choose $b_{k,i} = 0$ for each $(k,i)\in\Theta$ such that $a_{k,i} = 0$, and assume
$b_{K,d_K - I} = -b_{K,I}$ for each $(K,I)\in\Lambda$. For $R, S, T, U\in\Z$, we have
\begin{align}
\label{eq:GeneralSchurRSTU}
    &\Sigma_{R,S,T,U}(\bs{b}_\Theta)
    \\ \nonumber & \quad\quad =
    \frac{1}{ 2 s_{\delta}(\bs{b}) }\Big(
        (2P_{S+T+U} - P_S P_{T+U} - P_T P_{S+U}
    \\ \nonumber &\quad\quad\quad\quad\quad
         - P_U P_{S+T} + P_S P_T P_U)s_{R - e - C}
    \\ \nonumber &\quad\quad\quad
        + (P_{T+U} - P_T P_U) s_{R + S - e - C}
        + (P_{S+U} - P_S P_U) s_{R + T - e - C}
    \\ \nonumber &\quad\quad\quad
        + (P_{S+T} - P_S P_T) s_{R + U - e - C}
    \\ \nonumber &\quad\quad\quad
        + 2 P_U s_{R + S + T - e - C}+ 2 P_T s_{R + S + U - e - C} + 2 P_S s_{R + T + U - e - C}
    \\ \nonumber &\quad\quad\quad
        - 6 s_{R + S + T + U - e - C}
    \Big)
\end{align}
\begin{align}
\label{eq:GeneralSchurRST}
    \Sigma_{R,S,T}(\bs{b}_\Theta)
    &=
    \frac{1}{ 2 s_{\delta}(\bs{b}) }\Big(
        (P_S P_T - P_{S+T})s_{R - e - C} - P_T s_{R + S - e - C}
    \\ \nonumber &\quad\quad\quad
        - P_S s_{R + T - e - C} + 2 s_{R + S + T - e - C} \Big),
\end{align}
\begin{align}
\label{eq:GeneralSchurRS}
    \Sigma_{R,S}(\bs{b}_\Theta)
    &=
    \frac{P_S s_{R - e - C} - s_{R + S - e - C}}{ 2 s_{\delta}(\bs{b}) },
    \quad\quad\quad\quad\mbox{and}
    \\ \label{eq:GeneralSchurR}
    \Sigma_R(\bs{b}_\Theta)
    &=
    \frac{ s_{R - e - C} }{ 2 s_{\delta}(\bs{b}) }.
\end{align}
\end{lemma}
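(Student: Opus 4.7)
The plan is to first establish Equation~\eqref{eq:GeneralSchurR} for the single-index sum $\Sigma_R$, and then reduce the remaining three identities to this base case by expanding the inner sums via inclusion-exclusion. The key observation for the base case is that the hypotheses on $\bs{b}_\Theta$ cause the denominator in each summand of $\Sigma_R$ to factor through the squared $\Lambda$-variables in a Vandermonde-like way, after which the sum becomes the first-column cofactor expansion of an alternant $A_{\delta+\rho}(\bs{b})$.

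For Equation~\eqref{eq:GeneralSchurR}, fix $(K,I)\in\Lambda$ and partition $\Theta\setminus\{(K,I)\}$ into three groups. The $e$ entries with $b_{k,i}=0$ contribute a total factor $b_{K,I}^e$; the twin $(K,d_K-I)$, for which $b_{K,d_K-I}=-b_{K,I}$, contributes $2b_{K,I}$; and each $(k,i)\in\Lambda\setminus\{(K,I)\}$ pairs with its twin to contribute $(b_{K,I}-b_{k,i})(b_{K,I}+b_{k,i})=b_{K,I}^2-b_{k,i}^2$. Relabelling $\bs{b}=(b_j)_{j=1}^C$ via $\iota$,
\[
    \Sigma_R(\bs{b}_\Theta) = \frac{1}{2} \sum_{j=1}^C \frac{b_j^{R-e-1}}{\prod_{k\neq j}(b_j^2 - b_k^2)}.
\]
Multiplying numerator and denominator by $\prod_{j<k}(b_j^2-b_k^2)=A_\delta(\bs{b}^2)$, the coefficient of $b_j^{R-e-1}$ becomes $(-1)^{j-1}$ times the $(C-1)$-variable Vandermonde of $\bs{b}^2$ with the $j$-th entry removed. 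This is precisely the first-column cofactor expansion of $A_{\delta+\rho}(\bs{b})$ with $\rho=(R-e-C,\,C-2,\,C-3,\,\ldots,\,1,\,0)$, since $(\delta+\rho)_1=R-e-1$ and $(\delta+\rho)_k=2(C-k)$ for $k\geq 2$. Combined with the classical identity $A_\delta(\bs{b}^2)=A_{2\delta}(\bs{b})=s_\delta(\bs{b})\,A_\delta(\bs{b})$, this gives $\Sigma_R=s_{R-e-C}/(2s_\delta(\bs{b}))$.

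For the multi-index sums, set $Q_X:=P_X-b_{K,I}^X$, which equals the inner sum $\sum_{(\kappa,\lambda)\in\Theta\setminus\{(K,I)\}}b_{\kappa,\lambda}^X$. Standard inclusion-exclusion on the partition lattice yields
\[
    \sum_{\substack{(\kappa,\lambda),(\kappa',\lambda')\in\Theta\setminus\{(K,I)\}\\ \text{distinct}}} b_{\kappa,\lambda}^S b_{\kappa',\lambda'}^T = Q_SQ_T - Q_{S+T},
\]
and the analogous distinct triple sum equals $Q_SQ_TQ_U - Q_SQ_{T+U} - Q_TQ_{S+U} - Q_UQ_{S+T} + 2Q_{S+T+U}$. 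Substituting $Q_X=P_X-b_{K,I}^X$ and collecting terms expresses $\Sigma_{R,S}$, $\Sigma_{R,S,T}$, and $\Sigma_{R,S,T,U}$ as $\Z[P_\bullet]$-linear combinations of $\Sigma_{R'}$ for $R'\in\{R,\,R+S,\,R+T,\,R+U,\,R+S+T,\,R+S+U,\,R+T+U,\,R+S+T+U\}$ (with only the relevant subset appearing in the $\Sigma_{R,S}$ and $\Sigma_{R,S,T}$ cases). Applying Equation~\eqref{eq:GeneralSchurR} term-by-term then produces Equations~\eqref{eq:GeneralSchurRS}, \eqref{eq:GeneralSchurRST}, and \eqref{eq:GeneralSchurRSTU}.

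The main obstacle is the cofactor identification in Equation~\eqref{eq:GeneralSchurR}: lining up the column exponents $(\delta+\rho)_k=2(C-k)$ for $k\geq 2$ and $(\delta+\rho)_1=R-e-1$ precisely with the first-column expansion and tracking the $(-1)^{j-1}$ signs. The subsequent expansion for $\Sigma_{R,S,T,U}$ produces eight $P$-monomials whose coefficients must be matched against those asserted in Equation~\eqref{eq:GeneralSchurRSTU}, but this is routine bookkeeping once the inclusion-exclusion identity above is in place.
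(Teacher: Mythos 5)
Your proposal is correct and follows essentially the same route as the paper's proof: the same factorization of the denominator through zero weights and twin pairs into a Vandermonde in the squared $\Lambda$-variables, the same inclusion--exclusion expansion of the distinct-index sums into power sums, and the same identification of the resulting single-index sums as first-column cofactor expansions of alternants divided by $s_\delta(\bs{b})$. The only difference is organizational—you prove the $\Sigma_R$ case first and reduce the others to it by linearity, whereas the paper expands $\Psi_{S,T,U}$ first and then applies the same alternant identity term by term—which changes nothing of substance.
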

Note that $a_{K,d_K - I} = -a_{K,I}$ so that the required relation holds in the limit $\bs{b}_\Theta\to\bs{a}_\Theta$.
Depending on the values of $R$, $S$, $T$, $U$ and $e$, it may be that the partitions appearing in
Equations~\eqref{eq:GeneralSchurRSTU}, \eqref{eq:GeneralSchurRST}, \eqref{eq:GeneralSchurRS}, and \eqref{eq:GeneralSchurR}
are not non-increasing in the first two entries so that the Schur polynomials are non-standard in the
sense described in Remark~\ref{rem:NonStandScur}.
\begin{proof}
We express
\begin{align*}
    \Sigma_{R,S,T,U}(\bs{b}_\Theta)
    &=
    \sum\limits_{(K,I) \in \Lambda}
        \frac{b_{K,I}^{R}
            \sum\limits_{\substack{(\kappa,\lambda),(\kappa^\prime,\lambda^\prime),
                (\kappa^{\prime\prime},\lambda^{\prime\prime})\in \\
                \Theta\smallsetminus \{(K,I)\},\,
                \mbox{\scriptsize distinct}}}
                    b_{\kappa,\lambda}^S b_{\kappa^\prime,\lambda^\prime}^T
                    b_{\kappa^{\prime\prime},\lambda^{\prime\prime}}^U }
                {2b_{K,I} (b_{K,I} - 0)^e \prod\limits_{\substack{(k,i)\in \\ \Lambda\smallsetminus \{(K,I)\} }}
                    (b_{K,I} - b_{k,i})(b_{K,I} + b_{k,i}) }
    \\&=
    \sum\limits_{(K,I) \in \Lambda}
        \frac{b_{K,I}^{R-e-1}
            \sum\limits_{\substack{(\kappa,\lambda),(\kappa^\prime,\lambda^\prime),
                (\kappa^{\prime\prime},\lambda^{\prime\prime})\in \\
                \Theta\smallsetminus \{(K,I)\},\,
                \mbox{\scriptsize distinct}}}
                    b_{\kappa,\lambda}^S b_{\kappa^\prime,\lambda^\prime}^T
                    b_{\kappa^{\prime\prime},\lambda^{\prime\prime}}^U }
                {2\prod\limits_{\substack{(k,i)\in \\ \Lambda\smallsetminus \{(K,I)\} }}
                    (b_{K,I}^2 - b_{k,i}^2) },
\end{align*}
where the fact that $b_{K,d_K - I} = -b_{K,I}$ implies that each factor $b_{K,I} - b_{K,d_K - I} = 2b_{K,I}$.
For brevity, define
\[
    \Psi_{S,T,U}(K,I)   :=
        \sum\limits_{\substack{(\kappa,\lambda),(\kappa^\prime,\lambda^\prime),
            (\kappa^{\prime\prime},\lambda^{\prime\prime})\in \\
            \Theta\smallsetminus \{(K,I)\},\,
            \mbox{\scriptsize distinct}}}
         b_{\kappa,\lambda}^S b_{\kappa^\prime,\lambda^\prime}^T
         b_{\kappa^{\prime\prime},\lambda^{\prime\prime}}^U,
\]
which we rewrite as
\begin{align}
    \nonumber
    \Psi_{S,T,U}(K,I)
    &=  (P_S - b_{K,I}^S)(P_T - b_{K,I}^T)(P_U - b_{K,I}^U)
            - (P_{S+T} - b_{K,I}^{S+T})(P_U - b_{K,I}^U)
        \\ \nonumber &\quad\quad
            - (P_{S+U} - b_{K,I}^{S+U})(P_T - b_{K,I}^T)
            - (P_{T+U} - b_{K,I}^{T+U})(P_S - b_{K,I}^S)
        \\ \nonumber &\quad\quad
            + 2(P_{S+T+U} - b_{K,I}^{S+T+U})
    \\ \label{eq:PsiRSTU}
    &=    2P_{S+T+U} - P_S P_{T+U} - P_T P_{S+U} - P_U P_{S+T}
            + P_S P_T P_U
        \\ \nonumber &\quad\quad
            - b_{K,I}^S P_T P_U - b_{K,I}^T P_S P_U - b_{K,I}^U P_S P_T
        \\ \nonumber &\quad\quad
            + 2 b_{K,I}^{S+T} P_U + 2 b_{K,I}^{S+U} P_T + 2 b_{K,I}^{T+U} P_S
        \\ \nonumber &\quad\quad
            + b_{K,I}^{S} P_{T+U} + b_{K,I}^{T} P_{S+U} + b_{K,I}^{U} P_{S+T}
            - 6 b_{K,I}^{S+T+U}.
\end{align}

Using the ordering of $\Theta$ described above, we can express
\begin{align}
    \label{eq:lastPhiRSTU}
    &\sum\limits_{(K,I) \in \Lambda}
        \frac{b_{K,I}^{R - e - 1} \Psi_{S,T,U}(K,I)}
            {2\prod\limits_{\substack{(k,i)\in \\ \Lambda\smallsetminus \{(K,I)\} }}
                (b_{K,I}^2 - b_{k,i}^2) }
    \\ \nonumber &=
    \frac{ \sum\limits_{(K,I) \in \Lambda} (-1)^{\iota(K,I)-1} b_{K,I}^{R - e - 1} \Psi_{S,T,U}(K,I)
            \prod\limits_{\substack{(k,i),(\kappa,\lambda)\in\Lambda\smallsetminus \{(K,I)\} \\ (k,i) < (\kappa,\lambda) }}
                    (b_{k,i}^2 - b_{\kappa,\lambda}^2)}
                {2\prod\limits_{\substack{(k,i),(\kappa,\lambda)\in\Lambda \\ (k,i) < (\kappa,\lambda) }}
                    (b_{k,i}^2 - b_{\kappa,\lambda}^2) }.
\end{align}
Expanding over the expression for $\Psi_{S,T,U}(K,I)$ in Equation~\eqref{eq:PsiRSTU}, this expression can
be written as a sum of similar expressions, which we consider simultaneously.

Recall that $C = |\Lambda|$. For any
integer $M$, we have
\begin{align*}
    &\frac{ \sum\limits_{(K,I) \in \Lambda} (-1)^{\iota(K,I)-1}
        b_{K,I}^M
        \prod\limits_{\substack{(k,i),(\kappa,\lambda)\in\Lambda\smallsetminus \{(K,I)\} \\ (k,i) < (\kappa,\lambda) }}
            (b_{k,i}^2 - b_{\kappa,\lambda}^2) }
            {\prod\limits_{\substack{(k,i),(\kappa,\lambda)\in\Lambda \\ (k,i) < (\kappa,\lambda) }}
                (b_{k,i}^2 - b_{\kappa,\lambda}^2) }
    \\&\quad\quad=
    \frac{ \det\left( b_{k,i}^{\rho_j + C - j} \right)_{\substack{(k,i)\in\Lambda \\ 1\leq j \leq C}} }
        {\prod\limits_{\substack{(k,i),(\kappa,\lambda)\in\Lambda \\ (k,i) < (\kappa,\lambda) }}
            (b_{k,i} + b_{\kappa,\lambda})(b_{k,i} - b_{\kappa,\lambda}) },
\end{align*}
where $\rho = (M-C+1, C-2, C-3, C-4, \ldots, 1, 0)$, and where the matrix
$\left(b_{k,i}^{\rho_j + n - j}\right)$ is interpreted as indexing rows by $(k,i)\in\Lambda$ (in terms
of the order described above) and columns by $j = 1,\ldots,C$. To see this last step, notice that the numerator
of the previous equation can be seen as the cofactor expansion of the determinant along the first column.
Hence,
\[
    \frac{ \det\left( b_{k,i}^{\rho_j + C - j} \right)_{\substack{(k,i)\in\Lambda \\ 1\leq j \leq C}} }
        {\prod\limits_{\substack{(k,i),(\kappa,\lambda)\in\Lambda \\ (k,i) < (\kappa,\lambda) }}
            (b_{k,i} + b_{\kappa,\lambda})(b_{k,i} - b_{\kappa,\lambda}) }
    =
    \frac{ s_{\rho}(\bs{b}) }
        {\prod\limits_{\substack{(k,i),(\kappa,\lambda)\in\Lambda \\ (k,i) < (\kappa,\lambda) }}
            (b_{k,i} + b_{\kappa,\lambda}) }
    =
    \frac{ s_{\rho}(\bs{b}) }
        { s_{\delta}(\bs{b}) }.
\]
Now, expanding Equation~\eqref{eq:lastPhiRSTU} using Equation~\eqref{eq:PsiRSTU} and applying this
observation to each term yields Equation~\eqref{eq:GeneralSchurRSTU}.
Equations~\eqref{eq:GeneralSchurRST}, \eqref{eq:GeneralSchurRS}, and \eqref{eq:GeneralSchurR}
are proved identically, replacing $\Psi_{S,T,U}(K,I)$ with
\begin{align*}
    \Psi_{S,T}(K,I)
    &:=     \sum\limits_{\substack{(\kappa,\lambda),(\kappa^\prime,\lambda^\prime)\in  \\
                \Theta\smallsetminus \{(K,I)\},\,
                    \mbox{\scriptsize distinct}}}
                b_{\kappa,\lambda}^S b_{\kappa^\prime,\lambda^\prime}^T
    \\&=    (P_S - b_{K,I}^S)(P_T - b_{K,I}^T) - (P_{S+T} - b_{K,I}^{S+T})
    \\&=    P_S P_T - P_{S+T} - b_{K,I}^S P_T - b_{K,I}^T P_S + 2 b_{K,I}^{S+T},
\\
    \Psi_{S}(K,I)
    &:=     \sum\limits_{\substack{(\kappa,\lambda) \in \Theta\smallsetminus \{(K,I)\}}}
                b_{\kappa,\lambda}^S
    \\&=    P_S - b_{K,I}^S,
\end{align*}
and $1$, respectively.
\end{proof}

\begin{remark}
\label{rem:PartialSchur}
Note that one may also consider the expressions in Equations~\eqref{eq:GeneralSchurRSTU},
\eqref{eq:GeneralSchurRST}, \eqref{eq:GeneralSchurRS}, and \eqref{eq:GeneralSchurR} as polynomials in the variables
$\big(b_{k,i} : (k,i)\in\Theta\big)$. The resulting expressions are \emph{partial Laurent-Schur
polynomials} as defined in \cite[Section 5]{CowieHerbigSeatonHerden}, where the two sets of
variables correspond to the $b_{k,i}$ with $(k,i)\in\Lambda$ and $(k,i)\notin\Lambda$.
\end{remark}

With this, we can now complete the computations of $\gamma_m$ for $m=0,1,2,3$ and hence the proof
of Theorem~\ref{thrm:Main}. We claim that as $\bs{b}_\Theta\to\bs{a}_\Theta$, the expressions given by
Propositions~\ref{prop:Gamma0First}, \ref{prop:Gamma1First}, and \ref{prop:Gamma2First}
tend to $\gamma_0$, $\gamma_1$, and $\gamma_2$, respectively. This can be seen by noting that the integrands in the definitions of the Laurent
coefficients are continuous on the circle and hence can be bounded as $\bs{b}_\Theta\to\bs{a}_\Theta$, so the
Dominated Convergence Theorem allows one to exchange the limit with the integral. See
\cite[end of Section 5-2]{HerbigSeaton}, where this argument is given in detail in a very similar case.

We first consider $\gamma_0$.

\begin{theorem}
\label{thrm:Gamma0Schur}
Let $V = \bigoplus_{k=1}^r V_{d_k}$ be an $\SL_2$-representation with $V^{\SL_2} = \{0\}$,
and assume $V$ is not isomorphic to $2V_1$, nor $V_d$ for $d \leq 4$.
Let $\bs{a} := \big(a_{k,i} : (k,i)\in\Lambda\big)$. The degree $3-D$ coefficient $\gamma_0$ of
the Laurent series of $\Hilb_V(t)$ is given by
\begin{equation}
\label{eq:Gamma0Schur}
    \gamma_0
    =
    \frac{\sigma_V s_{\rho}(\bs{a}) }
        {s_{\delta}(\bs{a})}.
\end{equation}
where $\rho = (C-3,C-3,C-3,C-4,\ldots,1,0)$ and $\delta = (C-1,C-2,\ldots,1,0)$.
\end{theorem}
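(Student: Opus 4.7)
The plan is to apply Lemma~\ref{lem:GeneralSchur} to the explicit expression for the degree $3-D$ Laurent coefficient already computed in Proposition~\ref{prop:Gamma0First}, and then to identify the resulting non-standard Schur polynomials (in the sense of Remark~\ref{rem:NonStandScur}) with the partition $\rho$ in the claim via the alternating property of alternants.

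First I would justify passage to the limit. As noted after Remark~\ref{rem:PartialSchur}, the Dominated Convergence Theorem allows us to exchange the limit $\bs{b}_\Theta\to\bs{a}_\Theta$ with the contour integral defining the Laurent coefficients, so $\gamma_0$ equals the limit of the expression given by Proposition~\ref{prop:Gamma0First}. Choose the approach of the parameters so that the symmetries $b_{K,d_K-I}=-b_{K,I}$ and $b_{k,i}=0$ whenever $a_{k,i}=0$ are preserved throughout the limit; these are exactly the hypotheses required by Lemma~\ref{lem:GeneralSchur}. Under these symmetries the power sum $P_1(\bs{b}_\Theta)$ vanishes identically, so the expression in Proposition~\ref{prop:Gamma0First} collapses to
\[
    \gamma_0 \;=\; \sigma_V\bigl(2\,\Sigma_{D-3}(\bs{b}_\Theta) - 2\,\Sigma_{D-4}(\bs{b}_\Theta)\bigr),
\]
with the $\Sigma_R$ as in Equation~\eqref{eq:GenericSchurSumR}.

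Next I would apply Equation~\eqref{eq:GeneralSchurR} of Lemma~\ref{lem:GeneralSchur}, giving $\Sigma_R(\bs{b}_\Theta)=s_{R-e-C}/(2\,s_\delta(\bs{b}))$. The dimension identity $D=2C+e$ (obtained from $r+\sum d_k = 2\sum\lceil d_k/2\rceil + \#\{k:d_k\text{ even}\}$) reduces the exponents to $R-e-C=C-3$ and $C-4$, so
\[
    \gamma_0 \;=\; \sigma_V\,\frac{s_{C-3}(\bs{a}) - s_{C-4}(\bs{a})}{s_\delta(\bs{a})},
\]
where the subscripts $s_M=s_{M,C-2,C-3,\ldots,1,0}$ refer to the non-standard notation of Lemma~\ref{lem:GeneralSchur}.

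The main step is then to convert these non-standard Schur polynomials into the standard one indexed by $\rho=(C-3,C-3,C-3,C-4,\ldots,1,0)$. For $s_{C-3}$ the exponent vector $\delta+(C-3,C-2,C-3,\ldots,0)$ equals $(2C-4,2C-4,2C-6,\ldots,0)$, so the alternant has two equal columns and $s_{C-3}(\bs{a})=0$. For $s_{C-4}$ the exponent vector is $(2C-5,2C-4,2C-6,2C-8,\ldots,0)$; swapping the first two entries yields exactly $\delta+\rho=(2C-4,2C-5,2C-6,2C-8,\ldots,0)$, so by the alternating property $s_{C-4}(\bs{a})=-s_\rho(\bs{a})$. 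Substituting gives the claimed formula
\[
    \gamma_0 \;=\; \sigma_V\,\frac{s_\rho(\bs{a})}{s_\delta(\bs{a})}.
\]
The main obstacle is purely bookkeeping: verifying that the exponent vectors produced by Lemma~\ref{lem:GeneralSchur} can be rearranged into $\delta+\rho$ by a single transposition (so the sign is $-1$, not some other sign), and checking the dimension identity $D=2C+e$ so that the Schur subscripts land at $C-3$ and $C-4$ precisely.
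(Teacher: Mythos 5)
Your proposal is correct and takes essentially the same route as the paper: it reduces the expression of Proposition~\ref{prop:Gamma0First} via Lemma~\ref{lem:GeneralSchur} (with the symmetries $b_{K,d_K-I}=-b_{K,I}$, $b_{k,i}=0$ for $a_{k,i}=0$), uses $P_1=0$ and $D-e-C=C$, and then identifies $s_{C-3}(\bs{a})=0$ and $s_{C-4}(\bs{a})=-s_{\rho}(\bs{a})$ by the repeated-entry and single-transposition arguments, exactly as in the paper. The only cosmetic difference is that you invoke $P_1(\bs{b}_\Theta)=0$ before applying the lemma (so you never need the $\Sigma_{D-4,1}$ case), whereas the paper applies Equation~\eqref{eq:GeneralSchurRS} first and sets $P_1(\bs{a}_\Theta)=0$ afterwards; both are valid since $s_\delta(\bs{a})\neq 0$ guarantees continuity at $\bs{b}=\bs{a}$.
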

Note that if $C  = 2$, e.g. if $V = V_1\oplus V_2$ or $V = 2V_2$, then the partition
$(C-3,C-3,C-3,C-4,\ldots, 1, 0) = (-1,-1)$ and hence corresponds to a Laurent-Schur
polynomial; see Remark~\ref{rem:NonStandScur}. In all other cases under consideration,
the partition appearing in Equation~\eqref{eq:Gamma0Schur} is a standard integer
partition.
\begin{proof}
As above, we let
$\bs{b} := \big(b_{k,i} : (k,i)\in\Lambda\big)$ and
$\bs{b}_\Theta = ( b_{k,i} : (k,i)\in\Theta)$
where the $b_{k,i}$ denote real parameters and use the shorthand
$s_M := s_{M,C-2,C-3, \ldots, 1, 0}(\bs{b})$.
Using Equations~\eqref{eq:GeneralSchurRS} and \eqref{eq:GeneralSchurR},
we rewrite Equation~\eqref{eq:Gamma0First} as
\begin{align}
    \label{eq:Gamma0SchurStep}
    &\sigma_V \Big( \Sigma_{D-3}(\bs{b}) - 2 \Sigma_{D-4}(\bs{b}) - \Sigma_{D-4,1}(\bs{b}) \Big)
    \\ \nonumber &\quad\quad=
    \frac{\sigma_V}{2 s_\delta(\bs{b})} \Big(
        s_{D-e-C-3}(\bs{b})
        -
        2 s_{D-e-C-4}(\bs{b})
    \\ \nonumber
    &\quad\quad\quad\quad\quad
        -
        P_1(\bs{b}_\Theta) s_{D-e-C-4}(\bs{b})
        + s_{D-e-C-3}(\bs{b})
    \Big).
\end{align}
Recall that
\[
    s_\delta(\bs{b})
    =   \prod\limits_{\substack{(k,i),(\kappa,\lambda)\in\Lambda \\ (k,i) < (\kappa,\lambda) }}
            (b_{k,i} + b_{\kappa,\lambda}),
\]
where the order $<$ is that described before Lemma~\ref{lem:GeneralSchur}. As $a_{k,i} > 0$
for each $(k,i)\in\Lambda$, $s_\delta(\bs{a})\neq 0$. Hence, the limit as $\bs{b}\to\bs{a}$
of the expression in Equation~\eqref{eq:Gamma0SchurStep} is continuous at $\bs{b} = \bs{a}$.
Moreover, as the elements of $\bs{a}_\Theta$ are either zero or come in positive and negative
pairs, $P_1(\bs{a}_{\Theta}) = 0$. Noting that $D-e-C$ is the number of negative elements of $\Theta$, which is equal to the number $C$ of elements of $\Lambda$, yields
\[
    \gamma_0
    =
    \frac{\sigma_V}{s_\delta(\bs{a})} \Big(
        s_{C-3}(\bs{a}) - s_{C-4}(\bs{a})
    \Big).
\]
However, note that $s_{C-3}(\bs{a})$ is defined in Equation~\eqref{eq:SchurDef}
in terms of the alternant associated to $\delta+(C-3,C-2,C-3,\ldots,1,0) = (2C-4, 2C-4, 2C-6,\ldots,2,0)$.
Provided $C\geq 2$, which is true for all cases under consideration, the repetition of $2C-4$ implies that
$s_{C-3}(\bs{a}) = 0$. Now $s_{C-4}(\bs{a})$ is defined in terms of the
alternant associated to $\delta+(C-4,C-2,C-3,\ldots,1,0) = (2C-5, 2C-4, 2C-6,\ldots,2,0)$, which is not
in standard form. Switching the first two entries, we have
$(2C-4, 2C-5, 2C-6,\ldots,2,0) = \delta+(C-3, C-3, C-3, C-4,\ldots,1,0)$. Hence,
\begin{equation}
\label{eq:SchurC3}
    s_{C-4}(\bs{a})
    =
    - s_{C-3,C-3,C-3,C-4,\ldots,1,0}(\bs{a}),
\end{equation}
completing the proof.
\end{proof}

We now turn to the computation of $\gamma_1$.

\begin{theorem}
\label{thrm:Gamma1Schur}
Let $V = \bigoplus_{k=1}^r V_{d_k}$ be an $\SL_2$-representation with $V^{\SL_2} = \{0\}$,
and assume $V$ is not isomorphic to $2V_1$ nor $V_d$ for $d \leq 4$.
Let $\bs{a} := \big(a_{k,i} : (k,i)\in\Lambda\big)$. Then
\begin{equation}
\label{eq:Gamma1Schur}
    \gamma_1
    =
    \frac{3 \sigma_V s_{\rho}(\bs{a}) }{2 s_{\delta}(\bs{a}) }
    =
    \frac{3\gamma_0}{2}
\end{equation}
where $\rho = (C-3,C-3,C-3,C-4,\ldots,1,0)$ and $\delta = \delta_C =(C-1,C-2,\ldots,1,0)$.
\end{theorem}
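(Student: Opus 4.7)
The plan is to apply Lemma~\ref{lem:GeneralSchur} to the expressions for $\gamma_1$ given in Proposition~\ref{prop:Gamma1First}, following the template of the proof of Theorem~\ref{thrm:Gamma0Schur}. First, I would rewrite Equation~\eqref{eq:Gamma1FirstCase1} by distributing the inner sums over $(\kappa,\lambda)$ and $(\kappa^\prime,\lambda^\prime)$ and collecting like powers of $b_{K,I}$ and $b_{\kappa,\lambda}$. This expresses the bracketed expression as a linear combination of the generic sums $\Sigma_R(\bs{b}_\Theta)$, $\Sigma_{R,S}(\bs{b}_\Theta)$, and $\Sigma_{R,S,T}(\bs{b}_\Theta)$ defined in Equations~\eqref{eq:GenericSchurSumR}--\eqref{eq:GenericSchurSumRST}, with constant or power-sum coefficients.

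Next, I would apply Equations~\eqref{eq:GeneralSchurR}, \eqref{eq:GeneralSchurRS}, and \eqref{eq:GeneralSchurRST} to rewrite each $\Sigma$ in terms of Schur polynomials $s_M(\bs{b})$ divided by $2 s_\delta(\bs{b})$. Since $s_\delta(\bs{a})\neq 0$ when every $a_{k,i} > 0$ on $\Lambda$, the resulting expression is continuous at $\bs{b}_\Theta = \bs{a}_\Theta$, so the limit simply evaluates the Schur and power-sum expressions at $\bs{a}_\Theta$. The key simplifications are that the entries of $\bs{a}_\Theta$ come in positive–negative pairs together with zeros, which forces every odd power sum $P_{2\ell+1}(\bs{a}_\Theta)$ to vanish and kills the many terms involving $P_1$ and $P_3$. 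A second round of simplification exploits that $D - e - C = C$, so the Schur polynomials $s_M$ that appear have indices $M = C-3, C-4, C-5$; those built on a partition with a repeated entry (after adjoining $\delta$) are zero, and the remaining non-standard Schur polynomials can be brought to standard form via row-swap identities analogous to Equation~\eqref{eq:SchurC3}. I expect the surviving terms to collapse to $3\sigma_V s_\rho(\bs{a})/(2 s_\delta(\bs{a}))$, which is $3\gamma_0/2$ by Theorem~\ref{thrm:Gamma0Schur}.

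For the special case where $d_1 = 1$ and all remaining $d_k$ are even, the same procedure applies to Equation~\eqref{eq:Gamma1FirstCase2}. The additional sum over $(K,I)\in\Lambda\smallsetminus\{(1,0),(1,1)\}$ arising from the $\zeta = -1$ contributions can be viewed as a Schur-type sum over the $a_{k,i}$ with the two entries $a_{1,0} = -1$, $a_{1,1} = 1$ excluded; using the fact that the factor $(1 - b_{1,0})(1 - b_{1,1})$ has a clean limit $(1-1)(1+1)$ replaced appropriately in the numerator expansion, this extra term should either telescope against a piece of the $\zeta = 1$ contribution or, more likely, combine with it through a Schur-polynomial identity to reproduce the same $3\gamma_0/2$.

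The main obstacle is the sheer bookkeeping. Equation~\eqref{eq:Gamma1FirstCase1} expands into about seven generic $\Sigma$-type sums, each of which produces two to four Schur-polynomial terms after Lemma~\ref{lem:GeneralSchur}, so one must keep careful track of which partitions yield zero due to repeated entries, which require a row swap and incur a sign, and which pair up to cancel via power-sum identities such as $P_2(\bs{a}) = 2\sum_{(k,i)\in\Lambda} a_{k,i}^2$. The Case~2 analysis is the most delicate, since one must verify that the extra $\zeta = -1$ contribution fits the uniform formula $3\gamma_0/2$ despite having a different combinatorial origin than the $\zeta = 1$ part.
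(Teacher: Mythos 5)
Your Case 1 plan coincides with the paper's proof: expand Equation~\eqref{eq:Gamma1FirstCase1} into the sums $\Sigma_{D-3}$, $\Sigma_{D-4}$, $\Sigma_{D-5}$, $\Sigma_{D-4,1}$, $\Sigma_{D-5,1}$, $\Sigma_{D-5,2}$, $\Sigma_{D-5,1,1}$, apply Lemma~\ref{lem:GeneralSchur}, use $D-e-C=C$, $P_1(\bs{a}_\Theta)=0$, the vanishing of $s_{C-3}(\bs{a})$ and $s_{C-5}(\bs{a})$ (repeated entries in the shifted partitions), and the row swap $s_{C-4}(\bs{a})=-s_{\rho}(\bs{a})$ from Equation~\eqref{eq:SchurC3}; the surviving term is $-\tfrac{18}{12}\,\sigma_V\, s_{C-4}(\bs{a})/s_{\delta}(\bs{a}) = \tfrac{3}{2}\,\sigma_V\, s_{\rho}(\bs{a})/s_{\delta}(\bs{a})$.

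Two points are off or missing, both in the parts you flag as delicate. First, in Case 2 ($d_1=1$, all other $d_k$ even) the extra $\zeta=-1$ sum in Equation~\eqref{eq:Gamma1FirstCase2} neither telescopes against nor combines with the $\zeta=1$ contribution. Treating it, as you suggest, as a sum over the reduced sets $\Theta_1=\{(k,i)\in\Theta : k\neq 1\}$ and $\Lambda_1$ (which belong to $\bigoplus_{k\geq 2}V_{d_k}$, of dimension $D-2$, same $e$, and $|\Lambda_1|=C-1$), it equals $\tfrac12\Sigma_{D-5}(\bs{b}_{\Theta_1})$, and Equation~\eqref{eq:GeneralSchurR} converts this into $s_{C-4,C-3,C-4,\ldots,1,0}(\bs{b}_1)\big/\big(4\, s_{\delta_{C-1}}(\bs{b}_1)\big)$; the associated alternant is indexed by $\delta_{C-1}+(C-4,C-3,C-4,\ldots,1,0)=(2C-6,2C-6,2C-8,\ldots,2,0)$, which has a repeated entry, so this term is identically zero. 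The $\zeta=1$ part alone already yields $3\gamma_0/2$ (with $\sigma_V=1$), and expecting the $\zeta=-1$ piece to be needed to \emph{reproduce} the answer would send you looking for a cancellation or identity that does not exist. Second, the theorem as stated also covers $V=V_1+V_2$, which is excluded from Proposition~\ref{prop:Gamma1First} (the terms $H_{V,K,I,\pm 1}$ have too few factors), so your argument, which starts from that proposition, does not reach it; the paper settles this case by direct computation, where $\Hilb_V(t)=1/\big((1-t^2)(1-t^3)\big)$ gives $\gamma_0=1/6$ and $\gamma_1=1/4=3\gamma_0/2$.
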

\begin{proof}
We continue to let $\bs{b} := \big(b_{k,i} : (k,i)\in\Lambda\big)$,
$\bs{b}_\Theta = (b_{k,i} : (k,i)\in\Theta)$,
and $s_M := s_{M,C-2,C-3, \ldots, 1, 0}(\bs{b})$.
We first assume all $d_k$ are even, at least two $d_k$ are odd, or at least
one odd $d_k > 1$. Using Equations~\eqref{eq:GeneralSchurRST},
\eqref{eq:GeneralSchurRS}, and \eqref{eq:GeneralSchurR},
Equation~\eqref{eq:Gamma1FirstCase1} is equal to
\begin{align*}
    &\sigma_V\Big(
        \frac{2}{3} \Sigma_{D-3}(\bs{b}) - 2 \Sigma_{D-4}(\bs{b}) + \frac{4}{3} \Sigma_{D-5}(\bs{b})
            + \frac{1}{6}\Sigma_{D-5,2}(\bs{b}) - \frac{5}{6}\Sigma_{D-4,1}(\bs{b})
    \\&\quad\quad
            + \Sigma_{D-5,1}(\bs{b}) + \frac{1}{2}\Sigma_{D-5,1,1}(\bs{b}) \Big)
    \\&\quad\quad=
    \frac{\sigma_V}{ 2 s_{\delta}(\bs{b}) }\Big(
        \frac{2}{3} s_{D - e - C - 3}(\bs{b})
            - 2 s_{D - e - C - 4}(\bs{b})
            + \frac{4}{3} s_{D - e - C - 5}(\bs{b})
    \\&\quad\quad\quad
            + \frac{1}{6}\big(P_2(\bs{b}_\Theta) s_{D - e - C - 5}(\bs{b})
                - s_{D - e - C - 3}(\bs{b}) \big)
    \\&\quad\quad\quad
            - \frac{5}{6}\big(P_1(\bs{b}_\Theta) s_{D - e - C - 4}(\bs{b})
                - s_{D - e - C - 3}(\bs{b}) \big)
    \\&\quad\quad\quad
            + \big(P_1(\bs{b}_\Theta) s_{D - e - C - 5}(\bs{b})
                - s_{D - e - C - 4}(\bs{b}) \big)
    \\&\quad\quad\quad
            + \frac{1}{2}\Big( \big(P_1(\bs{b}_\Theta)^2 - P_{2} (\bs{b}_\Theta) \big)
                s_{D - e - C- 5}(\bs{b})
                - P_1(\bs{b}_\Theta) s_{D - e - C - 4}(\bs{b})
    \\&\quad\quad\quad
                - P_1 s_{D - e - C - 4} + 2 s_{D - e - C - 3}(\bs{b})
    \Big) \Big).
\end{align*}
Then using the fact that $D-e-C = C$ noted in the proof of Theorem~\ref{thrm:Gamma0Schur},
this is equal to
\begin{align*}
    &\frac{\sigma_V}{ 2 s_{\delta}(\bs{b}) }\Big(
    \frac{2}{3} s_{C - 3}(\bs{b})
            - 2 s_{C - 4}(\bs{b})
            + \frac{4}{3} s_{C - 5}(\bs{b})
            + \frac{1}{6}\big(P_2(\bs{b}_\Theta) s_{C - 5}(\bs{b})
                - s_{C - 3}(\bs{b}) \big)
    \\&\quad\quad\quad
            - \frac{5}{6}\big(P_1(\bs{b}_\Theta) s_{C - 4}(\bs{b})
                - s_{C - 3}(\bs{b}) \big)
            + \big(P_1(\bs{b}_\Theta) s_{C - 5}(\bs{b})
                - s_{C - 4}(\bs{b}) \big)
    \\&\quad\quad\quad
            + \frac{1}{2}\Big( \big(P_1(\bs{b}_\Theta)^2 - P_2 (\bs{b}_\Theta) \big)
                s_{C - 5}(\bs{b})
                - P_1(\bs{b}_\Theta) s_{C - 4}(\bs{b})
                - P_1 s_{C - 4} + 2 s_{C - 3}(\bs{b})
    \Big)\Big)
    \\&\quad\quad=
    \frac{\sigma_V}{12 s_{\delta}(\bs{b}) }\Big(
        14 s_{C - 3}(\bs{b})
        - \big( 11 P_1(\bs{b}_\Theta) + 18 \big) s_{C - 4}(\bs{b})
    \\&\quad\quad\quad
        + \big( -2 P_2(\bs{b}_\Theta) + 3 P_1(\bs{b}_\Theta)^2 + 6 P_1(\bs{b}_\Theta) + 8 \big)
            s_{C - 5}(\bs{b}) \Big).
\end{align*}
As in the proof of Theorem~\ref{thrm:Gamma0Schur}, $s_{\delta}(\bs{a}) \neq 0$ so that this expression
is continuous at $\bs{b} = \bs{a}$. Similarly, $P_1(\bs{a}_\Theta) = 0$, the non-standard Schur polynomial
$s_{C - 3}(\bs{a}) = 0$, and $s_{C-4}(\bs{a}) = - s_{C-3,C-3,C-3,C-4,\ldots,1,0}(\bs{a})$.
The non-standard Schur polynomial $s_{C - 5}(\bs{a})$ is defined in terms of the alternant associated to
$\delta+(C-5,C-2,C-3,\ldots,1,0) = (2C-6,2C-4,2C-6,\ldots,2,0)$ and hence vanishes.
This completes the proof in this case.

Now assume $d_1 = 1$ and all other $d_k$ are even.
We need only deal with the additional sum in Equation~\eqref{eq:Gamma1FirstCase2},
\begin{equation}
\label{eq:Case2SchurExtraTerms}
    \sum\limits_{\substack{(K,I)\in\Lambda \\ K \neq 1}}
        \frac{ b_{K,I}^{D-5} }
            { 2 \prod\limits_{\substack{(k,i)\in\Theta \\ (k,i)\neq(K,I) \\ k\neq 1}}
                (b_{K,I}-b_{k,i})}.
\end{equation}
Define $\Theta_1 = \{(k,i)\in\Theta : k\neq 1\}$, $\Lambda_1 = \{(k,i)\in\Lambda : k\neq 1\}$,
$\bs{a}_1 := \big(a_{k,i} : (k,i)\in\Lambda_1 \big)$,
$\bs{b}_1 := \big(b_{k,i} : (k,i)\in\Lambda_1 \big)$, and $\bs{b}_{\Theta_1} := \big(b_{k,i} : (k,i)\in\Theta_1 \big)$.
Note that we can treat $\Theta_1$ and $\Lambda_1$ as associated to the representation $\bigoplus_{k=2}^r V_{d_k}$,
which has dimension $D-2$; the value of $e$ is unchanged, and $\Lambda_1$ has cardinality $C - 1$. Then we can
rewrite Equation~\eqref{eq:Case2SchurExtraTerms} as
\[
    \frac{1}{2}\sum\limits_{(K,I)\in\Lambda_1}
        \frac{ b_{K,I}^{D-5} }
            { \prod\limits_{\substack{(k,i)\in\Theta_1 \\ (k,i)\neq(K,I)}}
                (b_{K,I}-b_{k,i})}
    =
    \frac{1}{2}\Sigma_{D-5}(\bs{b}_{\Theta_1}).
\]
Applying Equation~\eqref{eq:GeneralSchurR} and recalling that $D - e - C = C$, this is equal to
\[
    \frac{ s_{C-4, C-3,C-4,\ldots, 1, 0}(\bs{b}_1) }{ 4 s_{\delta}(\bs{b}_1) }.
\]
We again note that $s_{\delta}(\bs{a}_1) \neq 0$ so that this function is continuous at
$\bs{b}_1 = \bs{a}_1$. The Schur polynomial associated to the non-standard partition
$(C-4, C-3,C-4,\ldots, 1, 0)$ is defined by the alternant associated to
$\delta_{C-1}+(C-4, C-3,C-4,\ldots, 1, 0) = (2C-6, 2C-6, 2C-8,\ldots,2,0)$ and hence
vanishes, completing the proof. That this result is also true for $V_1+V_2$ can be verified by
direct computation; see Table~\ref{tab:Exceptions}.
\end{proof}

The computations of $\gamma_0$ and $\gamma_1$ yield a value for the $a$-invariant
$a(\C[V]^{\SL_2})$.

\begin{corollary}
\label{cor:aInvar}
Let $V = \bigoplus_{k=1}^r V_{d_k}$ be an $\SL_2$-representation with $V^{\SL_2} = \{0\}$,
and assume $V$ is not isomorphic to $2V_1$, nor $V_d$ for $d \leq 4$. Then
$a(\C[V]^{\SL_2}) = -D$.
\end{corollary}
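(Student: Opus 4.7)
The plan is to combine Theorem~\ref{thrm:Gamma1Schur} with the standard identity
\[
    a(R) \;=\; -\dim R \;-\; 2\gamma_1/\gamma_0,
\]
which holds whenever $R$ is a graded Gorenstein algebra. Once this identity is in hand, substituting $\dim R = D-3$ and the relation $\gamma_1 = 3\gamma_0/2$ from Theorem~\ref{thrm:Gamma1Schur} immediately gives $a(R) = -(D-3) - 3 = -D$.

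The first step is to invoke the Gorenstein property of $R = \C[V]^{\SL_2}$. Since $\SL_2$ is connected semisimple and $V^{\SL_2} = \{0\}$, this is classical and is recorded, for instance, in \cite{KnopLittelmann} and \cite{PopovBook}. Equivalently, writing $\Hilb_R(t) = f(t)/(1-t)^n$ in lowest terms with $n = \dim R = D - 3$ and $f \in \Z[t]$ satisfying $f(1) \ne 0$, the numerator $f$ is palindromic: $f(t) = t^{d} f(1/t)$ where $d := \deg f = a(R) + n$.

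The second step is to translate palindromy into a relation among the first two Laurent coefficients. Since $f(t) = (1-t)^n \Hilb_R(t) = \sum_{m \ge 0} \gamma_m (1-t)^m$ is the Taylor series of $f$ at $t = 1$, we have $\gamma_0 = f(1)$ and $\gamma_1 = -f'(1)$. Differentiating $f(t) = t^d f(1/t)$ and evaluating at $t=1$ yields $f'(1) = d\, f(1) - f'(1)$, whence $-2\gamma_1 = d\, \gamma_0$; therefore $a(R) = d - n = -n - 2\gamma_1/\gamma_0$, establishing the claimed identity.

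Finally, Theorem~\ref{thrm:Gamma1Schur} gives $\gamma_1/\gamma_0 = 3/2$ under precisely the hypotheses of the corollary, so $a(R) = -(D-3) - 3 = -D$. The only conceptual ingredient beyond the paper's own $\gamma_1$ computation is the appeal to the Gorenstein property; everything else is a one-line differentiation of the functional equation. The main (but entirely standard) obstacle is ensuring a clean citation for Gorensteinness of $\C[V]^{\SL_2}$ under the stated hypotheses.
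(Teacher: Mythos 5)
Your overall route coincides with the paper's: Gorensteinness of $R=\C[V]^{\SL_2}$, the identity $a(R)+\dim R=-2\gamma_1/\gamma_0$, the relation $\gamma_1=\tfrac32\gamma_0$ from Theorem~\ref{thrm:Gamma1Schur}, and $\dim R=D-3$. The paper simply cites Gorensteinness from \cite[Corollary 1.9]{HochsterRoberts} and the identity from \cite[Equation (3.32)]{PopovVinberg}, whereas you rederive the identity; that rederivation contains one genuine inaccuracy. You assert that $\Hilb_R(t)$ can be written ``in lowest terms'' as $f(t)/(1-t)^n$ with $f\in\Z[t]$ and $n=\dim R$, and then speak of palindromy of the polynomial $f$. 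This is false for the rings at hand: $\C[V]^{\SL_2}$ is not generated in degree one, and its Hilbert series generally has poles at roots of unity other than $t=1$. For instance, $\Hilb_{V_5}(t)=(1+t^{18})/\bigl((1-t^4)(1-t^8)(1-t^{12})\bigr)$ has a pole of order $3=\dim$ at $t=-1$, so $(1-t)^3\Hilb_{V_5}(t)$ is not a polynomial and there is no palindromic numerator over $(1-t)^3$.

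The gap is local and easily repaired: take $f(t):=(1-t)^n\Hilb_R(t)$ merely as a rational function, which is regular and nonvanishing at $t=1$ (the pole there has order exactly $n$), with Taylor expansion $\sum_{m\ge 0}\gamma_m(1-t)^m$, so $f(1)=\gamma_0$ and $f'(1)=-\gamma_1$ as you say. The Gorenstein property gives the functional equation $\Hilb_R(1/t)=(-1)^{\dim R}\,t^{-a(R)}\Hilb_R(t)$ (the standard symmetry for graded Gorenstein algebras, cf.\ \cite{BrunsHerzog}, or simply cite \cite[Equation (3.32)]{PopovVinberg} as the paper does), which translates into $f(t)=t^{a(R)+n}f(1/t)$ with no polynomiality needed; your differentiation at $t=1$ then yields $-2\gamma_1/\gamma_0=a(R)+n$, using $\gamma_0\neq 0$ (it is the degree of $R$). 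One further point: $\dim\C[V]^{\SL_2}=D-3$ is not automatic (it fails, e.g., for $V_2$, where the invariant ring has dimension $1$ while $D-3=0$); under the corollary's hypotheses it follows from $1$-largeness \cite[Theorem 3.4]{HerbigSchwarz} together with \cite[Remark 9.2(3)]{GWSlifting}, as recorded at the start of Section~\ref{subsec:LaurentCases}, so you should invoke that rather than simply substituting $n=D-3$.
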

\begin{proof}
Note that $\C[V]^{\SL_2}$ is Gorenstein by \cite[Corollary 1.9]{HochsterRoberts},
which implies that $ - 2\gamma_1/\gamma_0 = a(\C[V]^{\SL_2}) + \dim(\C[V]^{\SL_2})$, see \cite[Equation (3.32)]{PopovVinberg}.
All representations except $2V_1$ and $V_d$ for $d \leq 4$  are $1$-large by
\cite[Theorem 3.4]{HerbigSchwarz} implying by
\cite[Remark (9.2)(3)]{GWSlifting} that $\dim(\C[V]^{\SL_2}) = D - 3$.
This completes the proof.
\end{proof}

\begin{remark}
\label{rem:KnopLittelmanGam1Superfluous}
As noted in the Introduction, this computation of the $a$-invariant agrees with that given
in \cite[Satz 1]{KnopLittelmann}. Note that Knop and Littelmann's computation of $a(\C[V]^{\SL_2})$ along with our computation of $\gamma_0$ render the computation
of $\gamma_1$ superfluous; we could have instead reversed the logic of
Corollary~\ref{cor:aInvar} to conclude that $\gamma_1 = 3\gamma_0/2$.
\end{remark}

Next, we compute $\gamma_2$ in the following.

\begin{theorem}
\label{thrm:Gamma2Schur}
Let $V = \bigoplus_{k=1}^r V_{d_k}$ be an $\SL_2$-representation with $V^{\SL_2} = \{0\}$,
and assume $V$ is not isomorphic to $V_d$ for $d=1,2,3,4,5,6,8$, $2V_1$, $V_1+V_2$, $V_1+V_3$, $V_1+V_4$,
$2V_2$, $V_2+V_3$, $V_2+V_4$, $2V_3$, nor $2V_4$.
Let $\bs{a} := \big(a_{k,i} : (k,i)\in\Lambda\big)$.

If all $d_k$ are even, at least two $d_k$ are odd, or at least one odd $d_k > 1$, then
\begin{equation}
\label{eq:Gamma2SchurCase1}
    \gamma_2
    =
    \sigma_V\,
    \frac{42 s_{\rho}(\bs{a}) + s_{\rho^\prime}(\bs{a})
            \big( P_2(\bs{a})  - 8\big) }
                {24 s_\delta(\bs{a})}.
\end{equation}
where $\rho = (C-3,C-3,C-3,C-4,\ldots,1,0)$, $\rho^\prime = (C-3, C-4, C-4, C-4, C-5,\ldots,1,0)$,
and $\delta = \delta_C= (C-1,C-2,\ldots,1,0)$.

If $d_1 = 1$ and all other $d_k$ are even, then
\begin{equation}
\label{eq:Gamma2SchurCase2}
    \gamma_2
    =
    \frac{42 s_{\rho}(\bs{a}) + s_{\rho^\prime}(\bs{a})
            \big( P_2(\bs{a})  - 8\big) }
                {24 s_\delta(\bs{a})}
        + \frac{s_{C - 4, C - 4, C - 4, C - 5, \ldots, 1, 0}(\bs{a_1}) }
            {4 s_{C - 2, C - 3, C - 4 \ldots, 1, 0}(\bs{a_1}) }
\end{equation}
where $\bs{a}_1$ denotes $\bs{a}$ with the entry $a_{1,1}$ removed.
\end{theorem}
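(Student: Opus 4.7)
The plan is to mimic the proofs of Theorems~\ref{thrm:Gamma0Schur} and \ref{thrm:Gamma1Schur}: rewrite the expressions of Proposition~\ref{prop:Gamma2First} as $\C$-linear combinations of the $\Sigma$-sums of Equations~\eqref{eq:GenericSchurSumRSTU}--\eqref{eq:GenericSchurSumR}, apply Lemma~\ref{lem:GeneralSchur} to turn each $\Sigma$ into a combination of $s_{M}(\bs{b})$ and power sums $P_{S}(\bs{b}_\Theta)$, and finally take the limit $\bs{b}_\Theta \to \bs{a}_\Theta$. After expanding Equation~\eqref{eq:Gamma2FirstCase1}, the relevant quantities that appear are $\Sigma_{D-3}, \Sigma_{D-4}, \Sigma_{D-5}, \Sigma_{D-6}$; $\Sigma_{D-4,1}, \Sigma_{D-5,1}, \Sigma_{D-6,1}, \Sigma_{D-5,2}, \Sigma_{D-6,2}$; $\Sigma_{D-5,1,1}, \Sigma_{D-6,1,1}, \Sigma_{D-6,2,1}$; and $\Sigma_{D-6,1,1,1}$, with explicit coefficients obtained by expanding the inner polynomial in Equation~\eqref{eq:Gamma2FirstCase1}.

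The evaluation at $\bs{a}_\Theta$ will be driven by five elementary facts: (i) $s_\delta(\bs{a})\neq 0$, so continuity lets us simply substitute; (ii) $P_{1}(\bs{a}_\Theta)=P_{3}(\bs{a}_\Theta)=0$ because the entries of $\bs{a}_\Theta$ either vanish or come in positive/negative pairs; (iii) $s_{C-3}(\bs{a})=0$ and $s_{C-5}(\bs{a})=0$ since the corresponding alternant indices $(2C-4,2C-4,2C-6,\ldots)$ and $(2C-6,2C-4,2C-6,\ldots)$ have repeated entries; (iv) $s_{C-4}(\bs{a}) = -s_\rho(\bs{a})$ with $\rho=(C-3,C-3,C-3,C-4,\ldots,1,0)$, as shown in Equation~\eqref{eq:SchurC3}; and (v) $s_{C-6}(\bs{a}) = s_{\rho'}(\bs{a})$ with $\rho'=(C-3,C-4,C-4,C-4,C-5,\ldots,1,0)$, which arises from sorting the alternant index $(2C-7,2C-4,2C-6,2C-8,\ldots)$ via an even permutation. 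Using (i)--(v), the only $\Sigma$'s with nonzero limit are $\Sigma_{D-4}$, $\Sigma_{D-5,1}$, $\Sigma_{D-6}$, $\Sigma_{D-6,2}$ and $\Sigma_{D-6,1,1}$; the remaining terms vanish outright.

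Collecting coefficients then yields an expression of the form $(A\, s_\rho(\bs{a}) + B\, s_{\rho'}(\bs{a}) + C\, P_2(\bs{a}) s_{\rho'}(\bs{a}))/(24 s_\delta(\bs{a}))$. The main obstacle is purely bookkeeping: one must carefully track the contributions of each $\Sigma$-sum through Lemma~\ref{lem:GeneralSchur}, keep the sign conventions in (iv) and (v) straight, and verify that the cancellations collapse to the stated numerical coefficients $42$, $1$, and $-8$. The $\sigma_V$ prefactor in Equation~\eqref{eq:Gamma2SchurCase1} comes in exactly as before, from the pairing $H_{V,K,I,1}=H_{V,K,I,-1}$ when every $d_k$ is even; when some odd $d_k>1$ or at least two odd $d_k$ occur, the $\zeta=-1$ terms have pole order at most $D-6$ and so contribute nothing, as noted in Section~\ref{subsec:LaurentCases}.

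For the $d_{1}=1$ case, the first term of Equation~\eqref{eq:Gamma2SchurCase2} is obtained exactly as above from the portion of Equation~\eqref{eq:Gamma2FirstCase2} matching Equation~\eqref{eq:Gamma2FirstCase1}. The supplementary double sum in Equation~\eqref{eq:Gamma2FirstCase2} is handled in the same spirit as in the proof of Theorem~\ref{thrm:Gamma1Schur}: with $\Theta_1 := \Theta\setminus\{(1,0),(1,1)\}$ and $\Lambda_1 := \Lambda\setminus\{(1,1)\}$, the sum decomposes as $\tfrac{1}{4}\bigl[3\Sigma_{D-5}(\bs{b}_{\Theta_1}) - (b_{1,0}+b_{1,1}+2)\Sigma_{D-6}(\bs{b}_{\Theta_1}) - \Sigma_{D-6,1}(\bs{b}_{\Theta_1})\bigr]$, and Lemma~\ref{lem:GeneralSchur} applies to the smaller representation $\bigoplus_{k=2}^{r} V_{d_k}$ of dimension $D-2$ with $|\Lambda_1|=C-1$ and with the same value of $e$. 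Using $P_1(\bs{a}_{\Theta_1})=0$ and $b_{1,0}+b_{1,1}\to 0$, the first and third pieces vanish because $s_{C-4}(\bs{a}_1)=0$ (repeated alternant index $2C-6$), while the second reduces, after one sign-producing transposition, to $s_{C-4,C-4,C-4,C-5,\ldots,1,0}(\bs{a}_1)/(4\,s_{C-2,C-3,\ldots,1,0}(\bs{a}_1))$, which is the extra term in Equation~\eqref{eq:Gamma2SchurCase2}. Finally, the claim that $\gamma_3=5(\gamma_2-\gamma_0)/2$ (Equation~\eqref{eq:Gamma3}) can be read off either by an analogous though heavier expansion through degree $6-D$, or (as foreshadowed in the introduction) by combining the $a$-invariant computation of Corollary~\ref{cor:aInvar} with a Gorenstein-symmetry identity on the Laurent coefficients.
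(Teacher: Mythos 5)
Your proposal is correct and follows essentially the same route as the paper's proof: expand Proposition~\ref{prop:Gamma2First} into the $\Sigma$-sums, apply Lemma~\ref{lem:GeneralSchur}, and pass to the limit using $s_\delta(\bs{a})\neq 0$, $P_1(\bs{a}_\Theta)=P_3(\bs{a}_\Theta)=0$, the vanishing of $s_{C-3}(\bs{a})$ and $s_{C-5}(\bs{a})$, and the sign-sorted identifications $s_{C-4}(\bs{a})=-s_\rho(\bs{a})$, $s_{C-6}(\bs{a})=s_{\rho'}(\bs{a})$; your list of surviving sums ($\Sigma_{D-4}$, $\Sigma_{D-5,1}$, $\Sigma_{D-6}$, $\Sigma_{D-6,2}$, $\Sigma_{D-6,1,1}$) and your decomposition $\tfrac14\bigl[3\Sigma_{D-5}(\bs{b}_{\Theta_1})-(b_{1,0}+b_{1,1}+2)\Sigma_{D-6}(\bs{b}_{\Theta_1})-\Sigma_{D-6,1}(\bs{b}_{\Theta_1})\bigr]$ in the $d_1=1$ case match the paper's computation, and the remaining bookkeeping does collapse to the coefficients $42$, $1$, $-8$ as claimed.
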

\begin{proof}
As above, $\bs{b} := \big(b_{k,i} : (k,i)\in\Lambda\big)$,
$\bs{b}_\Theta = (b_{k,i} : (k,i)\in\Theta)$,
and $s_M := s_{M,C-2,C-3, \ldots, 1, 0}(\bs{b})$.
We first assume all $d_k$ are even, at least two $d_k$ are odd, or at least
one odd $d_k > 1$.
Rewriting Equation~\eqref{eq:Gamma2FirstCase1} using
Equations~\eqref{eq:GeneralSchurRSTU}, \eqref{eq:GeneralSchurRST},
\eqref{eq:GeneralSchurRS}, \eqref{eq:GeneralSchurR} yields
\begin{align*}
            &\frac{\sigma_V}{48 s_\delta(\bs{b})}\Big(
                12 s_{D-3-e-C}(\bs{b}) - 44 s_{D-4-e-C}(\bs{b}) + 48 s_{D-5-e-C}(\bs{b})- 16 s_{D-6-e-C}(\bs{b})
            \\ &\quad\quad
                - 16 \big(P_1(\bs{b}_\Theta) s_{D-4-e-C}(\bs{b})
                - s_{D-4+1-e-C}(\bs{b})\big)
            \\ &\quad\quad
                + 32 \big(P_1(\bs{b}_\Theta) s_{D-5-e-C}(\bs{b}) - s_{D-5+1-e-C}(\bs{b})\big)
            \\ &\quad\quad
                - 16 \big(P_1(\bs{b}_\Theta) s_{D-6-e-C}(\bs{b}) - s_{D-6+1-e-C}(\bs{b})\big)
            \\ &\quad\quad
                - 4 \big(P_2(\bs{b}_\Theta) s_{D-6-e-C}(\bs{b}) - s_{D-6+2-e-C}(\bs{b})\big)
            \\ &\quad\quad
                + 4 \big(P_2(\bs{b}_\Theta) s_{D-5-e-C}(\bs{b}) - s_{D-5+2-e-C}(\bs{b})\big)
            \\ &\quad\quad
                + 7 \big( (P_1(\bs{b}_\Theta) P_1(\bs{b}_\Theta)
                    - P_{2}(\bs{b}_\Theta)) s_{D-5 - e - C}(\bs{b})
                    - P_1(\bs{b}_\Theta) s_{D - 5 + 1 - e - C}(\bs{b})
            \\ &\quad\quad
                    - P_1(\bs{b}_\Theta) s_{D - 5 + 1 - e - C}(\bs{b})
                    + 2 s_{D - 5 + 1 + 1 - e - C}(\bs{b})\big)
            \\ &\quad\quad
                - 6 \big( (P_1(\bs{b}_\Theta) P_1(\bs{b}_\Theta)
                - P_2(\bs{b}_\Theta))s_{D - 6 - e - C}(\bs{b})
                    - P_1(\bs{b}_\Theta) s_{D - 6 + 1 - e - C}(\bs{b})
            \\ &\quad\quad
                    - P_1(\bs{b}_\Theta) s_{D - 6 + 1 - e - C}(\bs{b})
                    + 2 s_{D - 6 + 1 + 1 - e - C}(\bs{b}) \big)
            \\ &\quad\quad
                - 2 \big( (P_2(\bs{b}_\Theta) P_1(\bs{b}_\Theta)
                    - P_3(\bs{b}_\Theta))s_{D - 6 - e - C}(\bs{b})
                    - P_1(\bs{b}_\Theta) s_{D - 6 + 2 - e - C}(\bs{b})
            \\ &\quad\quad
                    - P_2(\bs{b}_\Theta) s_{D - 6 + 1 - e - C}(\bs{b})
                    + 2 s_{D - 6 + 2 + 1 - e - C}(\bs{b}) \big)
                - \big((2P_3(\bs{b}_\Theta) - P_1(\bs{b}_\Theta) P_2(\bs{b}_\Theta)
            \\ &\quad\quad
                    - P_1(\bs{b}_\Theta) P_2(\bs{b}_\Theta)
                    - P_1(\bs{b}_\Theta) P_2(\bs{b}_\Theta)
                    + P_1(\bs{b}_\Theta) P_1(\bs{b}_\Theta) P_1(\bs{b}_\Theta))
                        s_{D - 6 - e - C}(\bs{b})
            \\ &\quad\quad
                    + (P_2(\bs{b}_\Theta) - P_1(\bs{b}_\Theta) P_1(\bs{b}_\Theta))
                        s_{D - 6 + 1 - e - C}(\bs{b})
            \\ &\quad\quad
                    + (P_2(\bs{b}_\Theta) - P_1(\bs{b}_\Theta) P_1(\bs{b}_\Theta))
                        s_{D - 6 + 1 - e - C}(\bs{b})
            \\ &\quad\quad
                    + (P_2(\bs{b}_\Theta) - P_1(\bs{b}_\Theta) P_1(\bs{b}_\Theta))
                        s_{D - 6 + 1 - e - C}(\bs{b})
            \\ &\quad\quad
                        + 2 P_1(\bs{b}_\Theta) s_{D - 6 + 1 + 1 - e - C}(\bs{b})
                    + 2 P_1(\bs{b}_\Theta) s_{D - 6 + 1 + 1 - e - C}(\bs{b})
            \\ &\quad\quad
                    + 2 P_1(\bs{b}_\Theta) s_{D - 6 + 1 + 1 - e - C}(\bs{b})
                    - 6 s_{D - 6 + 1 + 1 + 1 - e - C}(\bs{b})
            \big) \Big).
\end{align*}
Applying $D - e - C = C$ and $P_r(\bs{a}_{\Theta}) = 0$ for $r$ odd, this is equal to
\begin{align*}
    &\frac{\sigma_V}{24 s_\delta(\bs{b})}\Big(
    20 s_{C-3}(\bs{b}) - 42 s_{C-4}(\bs{b}) + 32 s_{C-5}(\bs{b})
            - 8 s_{C-6}(\bs{b})
    \\ &\quad\quad\quad
            + P_2(\bs{b}_\Theta) s_{C-6}(\bs{b}) - 2 P_2(\bs{b}_\Theta) s_{C-5}(\bs{b})
            \Big),
\end{align*}
which, as $s_{C-3}(\bs{a}) = 0$ and $s_{C-5}(\bs{a}) = 0$, is equal to
\[
    \sigma_V \frac{- 42 s_{C-4}(\bs{b}) + s_{C-6}(\bs{b})
                    \big( P_2(\bs{b}_\Theta)  - 8\big) }
                {24 s_\delta(\bs{b})}.
\]
Rewriting the non-standard Schur polynomial
$s_{C-6}(\bs{b}) = s_{C-6,C-2,C-3,\ldots,1,0}(\bs{b})$
in standard form yields
$s_{C-6}(\bs{b}) = s_{C-3, C-4, C-4, C-4, C-5,\ldots,1,0}(\bs{b})$.
Applying this as well as Equation~\eqref{eq:SchurC3} completes the proof of
Equation~\eqref{eq:Gamma2SchurCase1}.

Now suppose $d_1 = 1$ and $d_i$ is even for $i > 1$. As in the proof of
Theorem~\ref{thrm:Gamma1Schur}, we use the notation $\Theta_1 = \{(k,i)\in\Theta : k\neq 1\}$,
$\Lambda_1 = \{(k,i)\in\Lambda : k\neq 1\}$, and
$\bs{b}_1 := \big(b_{k,i} : (k,i)\in\Lambda_1 \big)$, etc.; then
$|\Theta_1| = D - 2$ and $|\Lambda_1| = C - 1$.
Applying Equations~\eqref{eq:GeneralSchurRS} and \eqref{eq:GeneralSchurR}
the last sum over $(K,I)$ in Equation~\eqref{eq:Gamma2FirstCase2}
can be written
\begin{align*}
    \\&
    \frac{1}{8 s_{\delta_{C-1}}(\bs{b_1}) } \Big(
        3 s_{D - 5 - e - C+1,C-3,\ldots,1,0}(\bs{b_1})
            - (b_{0,1} \! +\! b_{1,1}\! +\! 2)s_{D - 6 - e - C+1,C-3,\ldots,1,0}(\bs{b_1})
    \\&\quad\quad\quad
            - P_1(\bs{b_1}) s_{D - 6 - e - C+1,C-3,\ldots,1,0}(\bs{b_1})
            + s_{D - 5 - e - C+1,C-3,\ldots,1,0}(\bs{b_1})
        \Big)
    \\&=
    \frac{1}{8 s_{\delta_{C-1}}(\bs{b_1}) } \Big(
    4 s_{C - 4,C-3,\ldots,1,0}(\bs{b_1}) - (b_{0,1} + b_{1,1} + 2)
            s_{C - 5,C-3,\ldots,1,0}(\bs{b_1})
    \\&\quad\quad\quad
            - P_1(\bs{b_1}) s_{C - 5,C-3,\ldots,1,0}(\bs{b_1})
        \Big).
\end{align*}
Now, $s_{C - 5,C-3,C-4,\ldots,1,0}(\bs{b_1})$ is associated to the alternant
$\delta_{C-1} + (C-5, C-3, C-4, \ldots, 1, 0) = (2C - 7, 2C - 6, 2C - 8, 2C - 10, \ldots, 2, 0)$.
Permuting the first two entries yields
$(2C - 6, 2C - 7, 2C - 8, 2C - 10, \ldots, 2, 0) = \delta_{C-1} + (C - 4, C - 4, C - 4, C - 5, \ldots, 1, 0)$ so that
$s_{C-5,C-3,C-4,\ldots,1,0}(\bs{b_1}) = -s_{C-4,C-4,C-4,C-5,\ldots,1,0}(\bs{b_1})$.
Similarly, $s_{C-4,C-3,C-4,\ldots,1,0}(\bs{b_1})$ is associated to the alternant
$\delta_{C-1} + (C-4, C-3, C-4, \ldots, 1, 0) = (2C - 6, 2C - 6, 2C - 8, 2C - 10, \ldots, 2, 0)$,
which vanishes. Then noting that $P_1(\bs{b_1}) = 0$ yields
\[
    \frac{ (b_{0,1} + b_{1,1} + 2)s_{C - 4, C - 4, C - 4, C - 5, \ldots, 1, 0}(\bs{b_1}) }
        {8 s_{\delta_{C-1}}(\bs{b_1}) }.
\]
Taking the limit $\bs{b}\to\bs{a}$ and recalling that $a_{0,1} = -1$ and $a_{1,1} = 1$ yields
\[
    \frac{s_{C - 4, C - 4, C - 4, C - 5, \ldots, 1, 0}(\bs{a_1}) }{4 s_{\delta_{C-1}}(\bs{a_1}) }
    =
    \frac{s_{C - 4, C - 4, C - 4, C - 5, \ldots, 1, 0}(\bs{a_1}) }
        {4 s_{C - 2, C - 3, C - 4 \ldots, 1, 0}(\bs{a_1}) },
\]
completing the proof. Note that this last term can be interpreted as $\gamma_0^\prime/8$,
where $\gamma_0^\prime$ is the first Laurent coefficient of the Hilbert series associated to
$\bigoplus_{k=2}^r V_{d_k}$, unless $\bigoplus_{k=2}^r V_{d_k}$ is one of the exceptions for $\gamma_0$.
\end{proof}

Finally, we can quickly determine $\gamma_3$.

\begin{corollary}
\label{cor:Gamma3Schur}
Let $V = \bigoplus_{k=1}^r V_{d_k}$ be an $\SL_2$-representation with $V^{\SL_2} = \{0\}$,
and assume $V$ is not isomorphic to $2V_1$, nor $V_d$ for $d \leq 4$. Then
\begin{equation}
\label{eq:Gamma3Schur}
    \gamma_3 = \frac{5(\gamma_2 - \gamma_0)}{2}.
\end{equation}
\end{corollary}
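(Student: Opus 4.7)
The plan is to bypass a direct computation of the degree $6-D$ Laurent coefficient and instead exploit the Gorenstein functional equation, precisely as was done implicitly in the proof of Corollary~\ref{cor:aInvar}. Since $\C[V]^{\SL_2}$ is Gorenstein \cite{HochsterRoberts} with Krull dimension $D-3$ (by \cite[Remark 9.2(3)]{GWSlifting}) and $a$-invariant $-D$ (by Corollary~\ref{cor:aInvar}), the standard functional equation for Gorenstein graded rings (see \cite[Corollary 4.4.6]{BrunsHerzog}) reads
\[
    \Hilb_V(1/t) = (-1)^{D-3} t^D \Hilb_V(t).
\]
Note that the exceptional hypotheses on $V$ are already required to invoke Corollary~\ref{cor:aInvar}, so no further exclusions are introduced.

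First I would substitute $u = 1 - t$, which gives $1 - 1/t = -u/(1-u)$, and rewrite the Laurent expansion in Equation~\eqref{eq:DefGammas} as $\Hilb_V(1-u) = \sum_m \gamma_m u^{m-D+3}$. Plugging into the functional equation and clearing the common factor $(-1)^{D-3} u^{3-D}$, the identity becomes
\[
    \sum_{m \geq 0} (-1)^m \gamma_m u^m (1-u)^{D-3-m}
        = (1-u)^D \sum_{m \geq 0} \gamma_m u^m,
\]
a formal identity of power series in $u$.

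Next I would expand both sides via the binomial theorem and match coefficients of $u^k$ for $k = 0, 1, 2, 3$. This yields
\[
    \sum_{m=0}^k \gamma_m \binom{D-3-m}{k-m}
        = \sum_{m=0}^k (-1)^m \gamma_m \binom{D}{k-m},
\]
since the overall sign $(-1)^k$ can be cancelled. The $k=0$ case is trivial, the $k=1$ case recovers $\gamma_1 = 3\gamma_0/2$ in agreement with Theorem~\ref{thrm:Gamma1Schur}, the $k=2$ case simplifies (after using the $k=1$ relation) to a tautology, and the $k=3$ case, upon substituting $\gamma_1 = 3\gamma_0/2$ and simplifying the four binomial coefficients, collapses to $2\gamma_3 = 5(\gamma_2 - \gamma_0)$, which is Equation~\eqref{eq:Gamma3Schur}.

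There is no genuine obstacle here; the argument is purely mechanical once the functional equation is in hand. The only points requiring care are keeping track of the signs coming from $1 - 1/t = -u/(1-u)$ and observing that the even-indexed coefficient matching (the $k=2$ case) is redundant, so that new information about $\gamma_3$ arises only from the $k=3$ comparison. This also explains, retrospectively, the remark in the Introduction that given Knop and Littelmann's formula for $a(\C[V]^{\SL_2})$ together with $\gamma_0$ and $\gamma_2$, the coefficients $\gamma_1$ and $\gamma_3$ are forced by the functional equation.
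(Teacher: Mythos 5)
Your proposal is correct: the verification goes through, and the $k=3$ comparison does collapse to $2\gamma_3 = 5(\gamma_2-\gamma_0)$ once $\gamma_1 = 3\gamma_0/2$ is inserted (I checked the binomial bookkeeping; the $k=2$ case is indeed redundant, reducing to $3\gamma_0(D-2)=2\gamma_1(D-2)$). The route differs from the paper's only in how the Gorenstein symmetry is packaged. The paper uses the same three inputs (Gorensteinness via \cite{HochsterRoberts}, $\dim \C[V]^{\SL_2}=D-3$, and $a(\C[V]^{\SL_2})=-D$ from Corollary~\ref{cor:aInvar}), but then simply cites \cite[Theorem 1, Equation (1.6)]{HerbigHerdenSeaton2}, which for a Gorenstein algebra of degree $-\bigl(a+\dim\bigr)=3$ gives the universal, $D$-independent relation $10\gamma_1-15\gamma_2+6\gamma_3=0$; combined with $\gamma_1=3\gamma_0/2$ this is Equation~\eqref{eq:Gamma3Schur}. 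You instead derive the needed relation directly from the functional equation $\Hilb_V(1/t)=(-1)^{D-3}t^{D}\Hilb_V(t)$ of \cite[Corollary 4.4.6]{BrunsHerzog} by expanding at $u=1-t$ and matching low-order coefficients; your raw $k=3$ identity still carries $D$-dependence, which cancels only after using the $k=1$ relation, whereas the cited identity is already $D$-free. What your version buys is self-containedness (no appeal to \cite{HerbigHerdenSeaton2}) plus a built-in consistency check recovering $\gamma_1=3\gamma_0/2$; what the paper's version buys is brevity and a relation among $\gamma_1,\gamma_2,\gamma_3$ alone. One cosmetic remark: invoking Corollary~\ref{cor:aInvar} for $a=-D$ and then ``re-deriving'' $\gamma_1=3\gamma_0/2$ from the $k=1$ case is not circular, since the substantive new information enters only at $k=3$, but it would be cleaner to present the $k=1$ step as a check rather than a derivation, exactly as you do.
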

\begin{proof}
As explained in the proof of Corollary~\ref{cor:aInvar}, $\C[V]^{\SL_2}$
is Gorenstein and, for the cases under consideration, $\dim(\C[V]^{\SL_2}) = D - 3$
and $a(\C[V]^{\SL_2}) = -D$. Hence, in the language of \cite{HerbigHerdenSeaton2},
the degree of the Gorenstein algebra
$\C[V]^{\SL_2}$ is $-\big( a(\C[V]^{\SL_2}) + \dim \C[V]^{\SL_2} \big) = 3$.
Therefore, by \cite[Theorem 1, Equation (1.6)]{HerbigHerdenSeaton2}, we have that
$10\gamma_1 - 15\gamma_2 + 6\gamma_3 = 0$, from which the result follows.
\end{proof}


\section{An Algorithm to Compute the Hilbert Series}
\label{sec:Algorithm}

In this section, we describe an algorithm to compute $\Hilb_V(t)$ for an arbitrary representation
$V = \bigoplus_{k=1}^r V_{d_k}$ of $\SL_2$.
This algorithm is similar to those given in \cite[Section 4]{HerbigSeaton} and
\cite[Section 4]{CowieHerbigSeatonHerden} for circle actions. Note, however, that those
algorithms consider only the \emph{generic} cases with no degeneracies caused by repeated weights.
In the case of $\SL_2$-invariants, this hypothesis is very restrictive; as was explained in
the introduction, it implies that $r \leq 2$ and, when $r = 2$, the degrees $d_1$ and $d_2$
have opposite parities. Hence, we begin by presenting a partial fraction decomposition in
Section~\ref{subsec:PartFrac} that allows us to extend to the general case. Note that this
decomposition can be used to extend the algorithms of \cite{CowieHerbigSeatonHerden,HerbigHerdenSeaton}
to the degenerate cases as well.


\subsection{Partial Fraction Decomposition}
\label{subsec:PartFrac}

The main partial fraction decomposition we consider is the following.

\begin{proposition}
\label{prop:PartFracGeneral}
For $t\in\C$, distinct values $x_1, \ldots, x_n \in \C$, and positive integers $m_1,\ldots, m_n$, we have
\begin{equation}
\label{eq:PartFracGeneral}
    \prod\limits_{i=1}^n \frac{1}{(1 - t x_i)^{m_i} }
    =
    \sum\limits_{i=1}^n \sum\limits_{j=0}^{m_i-1} \frac{ G_{i,j}(x_1,\ldots,x_n)}{(1 - t x_i)^{m_i - j} },
\end{equation}
where
\[
    G_{i,j}(x_1,\ldots,x_n) = \frac{1}{j! (-x_i)^j }
        \frac{d^j}{dt^j}\left.\left( \prod\limits_{\substack{k=1 \\ k\neq i}}^n
            \frac{1}{(1 - t x_k)^{m_k} }\right)\right|_{t= \frac{1}{x_i} } .
\]
\end{proposition}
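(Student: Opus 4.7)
The plan is to prove Proposition~\ref{prop:PartFracGeneral} by producing the partial fraction decomposition of the left-hand side directly and then reading off the coefficients via Taylor expansion. First I would observe that, as a function of $t$, the left-hand side is a rational function of numerator degree $0$ and denominator degree $\sum_{i=1}^n m_i$, with poles only at the $n$ distinct points $t = 1/x_i$, each of order $m_i$. Since the rational function vanishes at $\infty$, its partial fraction decomposition has no polynomial part, and the uniqueness of such a decomposition guarantees that it can be written in the form
\[
    \prod_{i=1}^n \frac{1}{(1-tx_i)^{m_i}} = \sum_{i=1}^n \sum_{j=0}^{m_i - 1} \frac{A_{i,j}}{(1 - tx_i)^{m_i - j}}
\]
for uniquely determined constants $A_{i,j}\in\C$. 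The content of the proposition is then the identification $A_{i,j} = G_{i,j}(x_1,\ldots,x_n)$.

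To extract $A_{i,j}$ I would fix $i$ and multiply both sides by $(1 - tx_i)^{m_i}$. On the left one obtains $F_i(t) := \prod_{k\neq i}(1 - t x_k)^{-m_k}$, which is holomorphic at $t = 1/x_i$, while on the right all terms coming from $k' \neq i$ pick up a factor $(1 - tx_i)^{m_i}$ and hence vanish to order $m_i$ at $t = 1/x_i$. Thus the Taylor expansion of $F_i$ at $t = 1/x_i$, truncated to order $m_i - 1$, coincides with $\sum_{j=0}^{m_i - 1} A_{i,j}(1 - tx_i)^j$.

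The last step is a change of variable. With $u := 1 - t x_i$ one has $du/dt = -x_i$, so $d/dt = -x_i\, d/du$ and consequently $d^j/dt^j = (-x_i)^j\, d^j/du^j$. The Taylor coefficient of $F_i$ in $u$ at $u=0$ is $\frac{1}{j!}\frac{d^j F_i}{du^j}\big|_{u=0} = \frac{1}{j!\,(-x_i)^j}\frac{d^j F_i}{dt^j}\big|_{t = 1/x_i}$, which is precisely $G_{i,j}(x_1,\ldots,x_n)$. Matching Taylor coefficients term by term for $j = 0, \ldots, m_i - 1$ yields $A_{i,j} = G_{i,j}$, completing the proof.

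No step is genuinely difficult here; the only point requiring some care is verifying that the truncation is legitimate, i.e. that the contribution of poles at $t = 1/x_{i'}$ for $i' \neq i$ disappears after multiplication by $(1 - tx_i)^{m_i}$ and evaluation of the relevant Taylor coefficients, which follows from the fact that each such contribution already vanishes to order $m_i$ at $t = 1/x_i$. Everything else is a standard uniqueness argument combined with the elementary change of variables $u = 1 - tx_i$.
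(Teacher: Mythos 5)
Your proposal is correct and follows essentially the same route as the paper: both start from the existence of a partial fraction decomposition with vanishing polynomial part (the paper checks this via $\lim_{t\to\infty}$), multiply by $(1-tx_i)^{m_i}$, and identify the coefficients by noting that the contributions from the other poles vanish to order $m_i$ at $t=1/x_i$. Your phrasing via Taylor coefficients in $u = 1 - tx_i$ is just a repackaging of the paper's direct computation of $\frac{d^j}{dt^j}$ evaluated at $t=1/x_i$, so no substantive difference.
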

\begin{proof}
Consider
\[
    f(t)    :=  \prod\limits_{i=1}^n \frac{1}{(1 - t x_i)^{m_i} }
\]
as a function of $t$. Clearly, a partial fraction decomposition
\[
    f(t)    =   C(x_1,\ldots,x_n)
                    + \sum\limits_{i=1}^n \sum\limits_{j=0}^{m_i-1}
                        \frac{ G_{i,j}(x_1,\ldots,x_n) }{(1 - t x_i)^{m_i - j} }
\]
is possible. Observe that $C(x_1,\ldots,x_n) = \lim_{t\to\infty} f(t) = 0$, so we need only evaluate the
$G_{i,j}(x_1,\ldots,x_n)$. We have
\begin{align}
    \nonumber
    &\prod\limits_{\substack{k=1 \\ k\neq i}}^n \frac{1}{(1 - t x_k)^{m_k}}
        =      (1 - t x_i)^{m_i} f(t)
    \\ \nonumber &\quad
    = \sum\limits_{\ell=0}^{m_i-1} G_{i,\ell}(x_1,\ldots,x_n)(1 - t x_i)^\ell
            + \sum\limits_{\substack{k=1 \\ k\neq i}}^n \sum\limits_{\ell=0}^{m_k-1}
                \frac{G_{k,\ell}(x_1,\ldots,x_n) (1 - t x_i)^{m_i}}
                    {(1 - t x_k)^{m_k-\ell}}
    \\ \label{eq:PartFracL1}
    &\quad=      \sum\limits_{\ell=0}^{j-1} G_{i,\ell}(x_1,\ldots,x_n)(1 - t x_i)^\ell
                + G_{i,j}(x_1,\ldots,x_n)(1 - t x_i)^j
    \\ \label{eq:PartFracL2}
    &\quad
                + \sum\limits_{\ell=j+1}^{m_i-1} G_{i,\ell}(x_1,\ldots,x_n)(1 - t x_i)^\ell
    \\ \label{eq:PartFracL3}
    &\quad
                + \sum\limits_{\substack{k=1\\k\neq i}}^n \sum\limits_{\ell=0}^{m_k-1}
                    \frac{G_{k,\ell}(x_1,\ldots,x_n) (1 - t x_i)^{m_i} }
                        {(1 - t x_k)^{m_k-\ell}}.
\end{align}
The first sum in \eqref{eq:PartFracL1} clearly has degree at most $j-1$ as a polynomial in $t$,
while, noting that $j+1 \leq m_i$, the expression in \eqref{eq:PartFracL2} and \eqref{eq:PartFracL3}
evidently has a zero at $t = 1/x_i$ of multiplicity at least $j+1$. Hence,
\begin{align*}
    & \frac{d^j}{dt^j}\left( \prod\limits_{\substack{k=1 \\ k\neq i}}^n \frac{1}{(1 - t x_k)^{m_k}} \right)
    =
    j!(-x_i)^j G_{i,j}(x_1,\ldots,x_n)
    \\
        + & \frac{d^j}{dt^j}\left( \sum\limits_{\ell=j+1}^{m_i-1} G_{i,\ell}(x_1,\ldots,x_n)(1 - t x_i)^\ell \right.
    \left.
                + \sum\limits_{\substack{k=1\\k\neq i}}^n \sum\limits_{\ell=0}^{m_k-1}
                    \frac{G_{k,\ell}(x_1,\ldots,x_n) (1 - t x_i)^{m_i} }
                        {(1 - t x_k)^{m_k-\ell}} \right),
\end{align*}
where $t = 1/x_i$ is a zero of the expression on the second line so that
\[
    \frac{d^j}{dt^j}\left.\left( \prod\limits_{\substack{k=1 \\ k\neq i}}^n \frac{1}{(1 - t x_k)^{m_k}} \right)
        \right|_{t = \frac{1}{x_i} }
    =
    j!(-x_i)^j G_{i,j}(x_1,\ldots,x_n).
    \qedhere
\]
\end{proof}

We observe two interesting special cases, beginning with the case where each $m_i = 1$.

\begin{corollary}
\label{cor:PartFracMi1}
For $t\in\C$ and distinct $x_1, \ldots, x_n\in\C$, we have
\[
    \prod\limits_{i=1}^n \frac{1}{1 - t x_i}
    =
    \sum\limits_{i=1}^n \frac{1}{(1 - t x_i)
        \prod\limits_{\substack{k=1 \\ k\neq i}}^n (1 - x_k/x_i)}.
\]
\end{corollary}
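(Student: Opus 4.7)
The plan is to obtain Corollary~\ref{cor:PartFracMi1} as an immediate specialization of Proposition~\ref{prop:PartFracGeneral} by setting each $m_i = 1$. First I would observe that under this assumption, the inner sum $\sum_{j=0}^{m_i - 1}$ in Equation~\eqref{eq:PartFracGeneral} collapses to the single term with $j=0$, so the decomposition reduces to
\[
    \prod_{i=1}^n \frac{1}{1 - tx_i} = \sum_{i=1}^n \frac{G_{i,0}(x_1,\ldots,x_n)}{1 - tx_i}.
\]

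Next I would compute the coefficient $G_{i,0}(x_1,\ldots,x_n)$ using the formula from Proposition~\ref{prop:PartFracGeneral}. With $j=0$, the zeroth derivative is just the evaluation, and the prefactor $\frac{1}{j!(-x_i)^j}$ is equal to $1$. Hence
\[
    G_{i,0}(x_1,\ldots,x_n) = \left.\prod_{\substack{k=1\\k\neq i}}^n \frac{1}{1 - tx_k}\right|_{t = 1/x_i} = \prod_{\substack{k=1\\k\neq i}}^n \frac{1}{1 - x_k/x_i},
\]
which is well-defined since the $x_i$ are assumed distinct. Substituting into the reduced decomposition yields the desired identity. There is essentially no obstacle here, as the result is a transparent corollary; the only thing to verify is that the $j=0$, $m_i=1$ specialization of the derivative formula yields the stated product, which is immediate.
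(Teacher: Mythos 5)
Your proof is correct and matches the paper's approach: the paper obtains Corollary~\ref{cor:PartFracMi1} precisely as the $m_i=1$ specialization of Proposition~\ref{prop:PartFracGeneral}, with $G_{i,0}$ evaluated exactly as you do. Nothing further is needed.
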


Restricting to the case $n = 2$ yields the following pleasing formula that we have come to
refer to as the \emph{Yin-Yang formula}.

\begin{corollary}
\label{cor:PartFracYinYang}
For $t\in\C$, $x, y \in \C$ distinct, and positive integers $\alpha$ and $\beta$, we have
\begin{equation*}
    \frac{1}{(1 - tx)^\alpha} \frac{1}{(1 - ty)^\beta}
    =
    \sum\limits_{i=0}^{\alpha-1} \frac{ \binom{i + \beta - 1}{i} }{(1 - t x)^{\alpha - i}}
        \frac{(-y/x)^i }{(1 - y/x)^{\beta + i}}
    +
    \sum\limits_{j=0}^{\beta-1} \frac{ \binom{j + \alpha - 1}{j} }{(1 - t y)^{\beta - j}}
        \frac{(-x/y)^j }{(1 - x/y)^{\alpha + j}}.
\end{equation*}
\end{corollary}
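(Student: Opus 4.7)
The plan is to apply Proposition~\ref{prop:PartFracGeneral} in the case $n = 2$, with $x_1 = x$, $x_2 = y$, $m_1 = \alpha$, and $m_2 = \beta$. In this specialization, the identity is precisely the partial fraction decomposition produced by that proposition; the only work remaining is to evaluate the coefficients $G_{1,j}(x,y)$ and $G_{2,j}(x,y)$ in closed form and match them to the summands in the claim.

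First I would compute $G_{1,j}(x,y)$ for $0 \leq j \leq \alpha - 1$. Specializing the formula of Proposition~\ref{prop:PartFracGeneral} yields
\[
    G_{1,j}(x,y) = \frac{1}{j!\,(-x)^j} \left.\frac{d^j}{dt^j} (1 - ty)^{-\beta}\right|_{t = 1/x}.
\]
A short induction shows that $\frac{d^j}{dt^j}(1-ty)^{-\beta} = \beta(\beta+1)\cdots(\beta+j-1)\, y^j (1-ty)^{-\beta-j}$. Evaluating at $t = 1/x$ and using the identity $y^j / (-x)^j = (-y/x)^j$, the rising factorial $\beta(\beta+1)\cdots(\beta+j-1)/j!$ consolidates into $\binom{\beta+j-1}{j}$, producing
\[
    G_{1,j}(x,y) = \binom{\beta+j-1}{j}\, \frac{(-y/x)^j}{(1 - y/x)^{\beta+j}},
\]
which matches the coefficient in the first sum of the statement.

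Since the identity to be proved is symmetric under the exchange $(x, \alpha) \leftrightarrow (y, \beta)$, the coefficients $G_{2,j}(x,y)$ producing the second sum follow from the identical computation with the roles of the two variables swapped. Assembling both families of terms via Proposition~\ref{prop:PartFracGeneral} then completes the argument. There is no real obstacle in this proof: the computation is entirely routine, and the only subtleties are the sign bookkeeping in $y^j / (-x)^j$ and the identification of the rising factorial with the binomial coefficient $\binom{\beta+j-1}{j}$.
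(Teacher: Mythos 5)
Your proposal is correct and takes the same route as the paper: Corollary~\ref{cor:PartFracYinYang} is presented there as the direct specialization of Proposition~\ref{prop:PartFracGeneral} to $n=2$, with the coefficient evaluation left implicit. Your computation of $G_{1,j}(x,y)=\binom{\beta+j-1}{j}\,(-y/x)^j/(1-y/x)^{\beta+j}$ (and its symmetric counterpart $G_{2,j}$) correctly supplies that routine step, so the argument is complete.
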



\subsection{Description of the Algorithm}
\label{subsec:DirtyMethod}

Let $V$ be a reducible representation of $\operatorname{SL}_2$. For simplicity we will assume that $V$ has
no trivial subrepresentations. The gist of the algorithm is formula \eqref{eq:finalformula} below.

To describe the algorithm, it will be convenient to introduce a new notation
for the decomposition of $V$ into irreducible representations as follows. Decompose the representation into
$V=V_{\operatorname{even}}\oplus V_{\operatorname{odd}}$, where $V_{\operatorname{even}}$
consists of those representations whose irreducible components have even degree and
$V_{\operatorname{odd}}$ consists of those representations whose irreducible components have odd degree.
Let $d_1>d_2>\ldots> d_r>0$ denote the (even) degrees of the irreducible components of $V_{\operatorname{even}}$
and $e_1>e_2>\ldots>e_s>0$ the (odd) degrees of the irreducible components of $V_{\operatorname{odd}}$.
Then we can write
\[
    V_{\operatorname{even}}=\bigoplus_{i=1}^r V_{d_i}^{m_i},
\]
where $m_i$ is the multiplicity of $V_{d_i}$ and
\[
    V_{\operatorname{odd}}=\bigoplus_{j=1}^s V_{e_j}^{n_j},
\]
where $n_j$ is the multiplicity of $V_{e_j}$.

We now determine the weights of the Cartan torus and their corresponding multiplicities. The even weights $d_1-2i$
for $i=0,1,\ldots, d_1$ occur with multiplicity $\mu_i:= \sum_{k:\:|d_1-2i|\leq d_k}m_k$.
Similarly, the odd weights that occur in $V$ are $e_1-2j$ for $j=0,1,\ldots, e_1$ and
occur with multiplicity $\nu_j:= \sum_{\ell:\:|e_1-2j|\leq e_\ell}n_\ell$.

With this  notation we rewrite the Hilbert series in Equation~\eqref{eq:MainIntFirst} as follows:
\[
    \Hilb_{V}(t)=\frac{1}{2\pi \sqrt{-1}}\int_{\Sp^1} \frac{(1-z^2)\:dz}
        {z\prod\limits_{i=0}^{d_1}(1-t z^{d_1-2i})^{\mu_i}\prod\limits_{j=0}^{e_1}(1-t z^{e_1-2j})^{\nu_j}}\:.
\]
We introduce variables $\bs{x} = (x_0,x_1,\ldots,x_{d_1})$ and $\bs{y} = (y_0,y_1,\ldots,y_{e_1})$
corresponding to the even and odd weights of the Cartan torus. Moreover, we introduce  the function
\[
    \Phi(t,\bs x, \bs y)
        :=  \frac{1}{\prod\limits_{i=0}^{d_1}(1-tx_i)^{\mu_i}\prod\limits_{j=0}^{e_1}(1-ty_j)^{\nu_j}}
\]
and the exceptional set $E$ of points $z\in\Sp^1$ such that
$z^{d_1}$, $z^{d_1-2}, \ldots, z^{-d_1}$, $z^{e_1}$, $z^{e_1-2},\ldots, z^{-e_1}$ are not pairwise distinct.
Noting that $E\subset \Sp^1$ is finite and defining $g:\mathbb C\to \mathbb C^{d_1+e_1+2}$,
$g(z):=(z^{d_1}, z^{d_1-2},\ldots, z^{-d_1},z^{e_1}, z^{e_1-2},\ldots, z^{-e_1})$, we can write
\begin{equation}
\label{eq:Phiformula}
    \Hilb_{V}(t)
    =   \frac{1}{2\pi \sqrt{-1}}\int_{\Sp^1\backslash E}(1-z^2)\Phi(t,g(z)) \frac{dz}{z} \: .
\end{equation}

At this point we make use of  the results of Section~\ref{subsec:PartFrac}. Namely, if
$x_0$, $x_1,\ldots,x_{d_1}$, $y_0$, $y_1,\ldots,y_{e_1}$ are pairwise distinct we use the partial
fraction decomposition
\[
    \Phi(t,\bs x, \bs y)=\sum_{i=0}^{d_1}\sum_{j=0}^{\mu_i-1}
        \frac{G_{i,j}(\bs x, \bs y)}{(1-tx_i)^{\mu_i-j}}
        + \sum_{k=0}^{e_1}\sum_{\ell=0}^{\nu_k-1}
        \frac{H_{k,\ell}(\bs x, \bs y)}{(1-ty_k)^{\nu_k-\ell}},
\]
where
\begin{align*}
    G_{i,j}(\bs x, \bs y)
        &=  \frac{1}{j!(-x_i)^j}\frac{d^j}{dt^j}
            \left.\left({\prod\limits_{\substack{l=0\\l\ne i}}^{d_1}(1-t x_l)^{-\mu_l}
            \prod\limits_{m=0}^{e_1}(1-ty_m)^{-\nu_k} }\right)\right|_{t=1/x_i},
        \\
    H_{k,\ell}(\bs x, \bs y)
        &=  \frac{1}{\ell!(-y_k)^\ell}\frac{d^\ell}{dt^\ell}
            \left.\left({\prod\limits_{l=0}^{d_1}(1-t x_l)^{-\mu_l}
            \prod\limits_{\substack{m=0\\m\ne k}}^{e_1}(1-ty_m)^{-\nu_k}}\right)\right|_{t=1/y_k}.
\end{align*}
Substituting this into Equation~\eqref{eq:Phiformula} we find
\begin{align*}
    \Hilb_{V}(t)
        &=  \frac{1}{2\pi \sqrt{-1}}\int_{\Sp^1\backslash E}
            (1-z^2)\left(\sum_{i=0}^{d_1}\sum_{j=0}^{\mu_i-1}
                \frac{G_{i,j}(g(z))}{(1-tz^{d_1-2i})^{\mu_i-j}}
                \right.
                \\ &\quad\quad\quad\quad\quad\quad\quad\quad \left.
                + \sum_{k=0}^{e_1}\sum_{\ell=0}^{\nu_k-1}
                \frac{H_{k,\ell}(g(z))}{(1-tz^{e_1-2k})^{\nu_k-\ell}}\right)\frac{dz}{z} \: .
\end{align*}
Introducing $\Phi_{i,j}(z):=(1-z^2)G_{i,j}(g(z))$ and $\Psi_{k,\ell}(z):=(1-z^2)H_{k,\ell}(g(z))$ and observing that
the integrands have no singularities along $\Sp^1$ yields
\begin{align}
\label{eq:Phi2}
    \Hilb_{V}(t)
        &=\frac{1}{2\pi \sqrt{-1}}
        \sum_{i=0}^{d_1}\sum_{j=0}^{\mu_i-1}
            \int_{\Sp^1}\frac{\Phi_{i,j}(z)}{(1-tz^{d_1-2i})^{\mu_i-j}}\frac{dz}{z}
    \\ \nonumber &\quad\quad
        +\frac{1}{2\pi \sqrt{-1}}
        \sum_{k=0}^{e_1} \sum_{\ell=0}^{\nu_k-1}
            \int_{\Sp^1}\frac{\Psi_{k,\ell}(z)}{(1-tz^{e_1-2k})^{\nu_k-\ell}}\frac{dz}{z} \: .
\end{align}

Now, for a non-negative integer $a$, recall from \cite[Section 4]{HerbigSeaton}, \cite{Springer} the operation
$U_a\co \Q[\![z]\!]\to\Q[\![t]\!]$ that assigns to a formal power series $F(z) = \sum_{i=0}^\infty F_i z^i$
the series
\[
    (U_a F)(t) := \sum\limits_{i=0}^\infty F_{ia} t^i \in \Q[\![t]\!].
\]
By \cite[Lemma 4.1]{HerbigSeaton}, if $F(t)$ is the power series of a rational function, then $(U_a F)(t)$
is as well. Similarly, if $a\ne 0$, then $U_a$ can be described in terms of averaging over $a$th roots of unity:
\[
   (U_a F)(t) = \frac{1}{a}\sum\limits_{\zeta^a=1} F(\zeta \sqrt[a]{t} ).
\]
We have the following.

\begin{proposition}
\label{prop:UaFormula}
For $n\ge 0$, define the differential operator $D_n:=\frac{d^n}{dz^n}\circ (z^n\cdot)$, i.e.
multiplication by $z^n$ followed by $n$-fold differentiation with respect to $z$. If
$F(z) = \sum_{i=0}^\infty F_i z^i$ is convergent on the closed unit disk, then
\begin{align*}
    \frac{1}{2\pi \sqrt{-1}}
        \int_{\Sp^1}\frac{F(z)}{(1-tz^{-a})^m} \frac{dz}{z}
        =   \frac{1}{(m-1)!}\left(D_{m-1}(U_a F)\right)(t).
\end{align*}
\end{proposition}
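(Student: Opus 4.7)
The plan is to verify the identity by computing both sides as power series in $t$ and matching coefficients term by term. First, I would expand the integrand using the generalized binomial series
\begin{equation*}
\frac{1}{(1 - tz^{-a})^m} = \sum_{k=0}^\infty \binom{m+k-1}{k} t^k z^{-ak},
\end{equation*}
which converges uniformly on $\mathbb{S}^1$ for $|t|$ small. Multiplying by $F(z) = \sum_{i=0}^\infty F_i z^i$ and by $z^{-1}\,dz$, I can interchange the sum with the contour integral using uniform convergence on the compact set $\mathbb{S}^1$ (the hypothesis that $F$ converges on the closed unit disk ensures $F$ is bounded there). Then the integral $\frac{1}{2\pi\sqrt{-1}}\int_{\mathbb{S}^1} z^{i-ak-1}\,dz$ is $1$ precisely when $i = ak$ and $0$ otherwise. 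Consequently
\begin{equation*}
\frac{1}{2\pi\sqrt{-1}}\int_{\mathbb{S}^1} \frac{F(z)}{(1 - tz^{-a})^m}\frac{dz}{z} = \sum_{k=0}^\infty \binom{m+k-1}{k} F_{ak}\, t^k.
\end{equation*}

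Next, I would compute the right-hand side by evaluating $D_{m-1}$ on the monomial $t^k$. Since
\begin{equation*}
D_{m-1}(t^k) = \frac{d^{m-1}}{dt^{m-1}}\bigl( t^{m-1+k} \bigr) = \frac{(m-1+k)!}{k!}\, t^k,
\end{equation*}
linearity and the definition $(U_a F)(t) = \sum_{k=0}^\infty F_{ak}\, t^k$ give
\begin{equation*}
\frac{1}{(m-1)!}\bigl(D_{m-1}(U_a F)\bigr)(t) = \sum_{k=0}^\infty \frac{(m-1+k)!}{(m-1)!\,k!} F_{ak}\, t^k = \sum_{k=0}^\infty \binom{m+k-1}{k} F_{ak}\, t^k,
\end{equation*}
which agrees with the expression obtained above.

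The proof is essentially a bookkeeping exercise, so there is no serious obstacle. The only point that deserves attention is the justification for swapping the integral and the infinite sums: one must note that the double series $\sum_{i,k} |F_i| \binom{m+k-1}{k} |t|^k |z|^{i-ak-1}$ converges on $\mathbb{S}^1$ provided $|t| < 1$, which follows from the boundedness of $F$ on the closed unit disk together with geometric convergence in $t$. Once this interchange is justified, the rest is purely combinatorial.
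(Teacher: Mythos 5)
Your proof is correct and follows the same route as the paper, which simply observes that both sides equal $\sum_{i=0}^\infty \binom{m-1+i}{m-1} F_{ia} t^i$; you have merely filled in the term-by-term integration and the computation $D_{m-1}(t^k) = \frac{(m-1+k)!}{k!}t^k$ that the paper leaves implicit. Note that $\binom{m+k-1}{k} = \binom{m-1+k}{m-1}$, so your series matches the paper's exactly.
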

\begin{proof}
It is easy to see that both sides are equal to $\sum_{i=0}^\infty \binom{m-1+i}{m-1} F_{ia} t^i$.
\end{proof}

Applying Proposition~\ref{prop:UaFormula} to Equation~\eqref{eq:Phi2}, we obtain
\begin{align}
\label{eq:finalformula}
    \Hilb_{V}(t)
        &=  \sum_{i=0}^{\frac{d_1}{2}} \sum_{j=0}^{\mu_i-1}
                \frac{\left (D_{\mu_i-j-1}\left( U_{d_1-2i} \Phi_{i,j}\right)\right)(t)}{(\mu_i-j-1)!}
    \\ \nonumber &\quad\quad
            + \sum_{k=0}^{\frac{e_1-1}{2}}\sum_{\ell=0}^{\nu_k-1}
                \frac{\left (D_{\nu_k-\ell-1}\left( U_{e_1-2k} \Psi_{k,\ell}\right)\right)(t)}{(\nu_k-\ell-1)!}.
\end{align}

To compute each $(U_{d_1-2i} \Phi_{i,j})(t)$ and $(U_{e_1-2k} \Psi_{k,\ell})(t)$, we follow the process described
in \cite[Section 4]{HerbigSeaton}. Specifically, when computing $(U_a F)(t)$ for a rational function $F(z)$ whose
denominator consists of factors of the form $1 - z^b$, each such factor transforms by the rule
\[
    (1 - z^b)   \longmapsto     (1 - t^{\lcm(a,b)/a} )^{\gcd(a,b)}
\]
to yield the denominator of $(U_a F)(t)$. Then we can determine the numerator
$\operatorname{num}((U_a F)(t))$ of $(U_a F)(t)$ via
\[
    \operatorname{num}((U_a F)(t))
    =   (U_a F)(t) \operatorname{denom}((U_a F)(t)).
\]
Writing $\Hilb_V(t) = P(t)/Q(t)$, we have by Kempf's bound \cite[Theorem 4.3]{KempfHRThrm}
that $\deg(P) \leq \deg(Q)$. Hence for each $F$, as any terms in the numerator with degree larger than
that of the denominator will cancel in the complete expression, we need only determine the Taylor expansion
up to $a\deg(Q)$.

This algorithm has been implemented on \emph{Mathematica} and is available from the authors by request.
We have been able to use it to compute the Hilbert series of large irreducible representations on a PC; as
an example, $\Hilb_{V_{50}}(t)$ was computed in 52 hours. It has denominator
$(1 - t^2)^2\prod_{i=3}^{49}(1 - t^i)$ and a numerator of degree $1175$ with largest coefficient
approximately $1.6996\times 10^{52}$.

For a simple reducible example with multiplicities, $V_2\oplus V_3\oplus V_3$ is computed in a few seconds; the Hilbert
series is given by
\begin{align*}
    &\Big(-t^{21} - t^{18} - 3 t^{17} - 4 t^{16} - 5 t^{15} - 8 t^{14} - 7 t^{13}
        - 3 t^{12} - 2 t^{11} + 2 t^{10} + 3 t^9 + 7 t^8
    \\ &\quad
        + 8 t^7 + 5 t^6 + 4 t^5 + 3 t^4 + t^3 + 1\Big)
    /\Big((1-t^2)^2(1-t^3)^2(1-t^4)^3(1-t^5)^2\Big)  .
\end{align*}


\appendix

\section{Exceptional Cases}
\label{ap:Exceptions}

For the sake of completeness, we give the Hilbert series and Laurent coefficients corresponding to the
representations to which some of our computations of the $\gamma_m$ do not apply
in Table~\ref{tab:Exceptions}.
The Hilbert series for these cases are known and can easily be computed directly using the above
methods; similarly, generating invariants are classical and can easily be computed, e.g. using the
algorithms described in \cite{BedratyukSL2Invariants} and \cite[Sections 4.1--2]{DerskenKemperBook}.
Note that our formulas for $\gamma_0$ and $\gamma_1$ apply to all cases except
$V_1$, $2V_1$, $V_2$, $V_3$, and $V_4$; the remaining cases are exceptions to the results given for
$\gamma_2$ and $\gamma_3$. Curiously, $\gamma_0$ is the reciprocal of an integer for all exceptions
listed. This is not the case in general; for instance, $V_7$ has $\gamma_0 = 11/11520$.

\begin{table}[h]
\label{tab:Exceptions}
\begin{tabular}{|c|c|c|c|c|c|c|c|}
\hline
    $\bs{V}$    &   $\bs{D}$&   $\bs{\Hilb_V(t)}$
    &   $\bs{\gamma_0}$     &   $\bs{\gamma_1}$     &   $\bs{\gamma_2}$     &   $\bs{\gamma_3}$
    &   $\bs{a(\C[V]^{\SL_2})}$
\\ \hline\hline
    $V_1$       &   $2$     &   $1$
    &   $1$                 &   $0$                 &   $0$                 &   $0$
    &   $2 - D = 0$
\\ \hline
    $V_2$       &   $3$     &   $\frac{1}{1 - t^2}$
    &   $\frac{1}{2}$       &   $\frac{1}{4}$       &   $\frac{1}{8}$       &   $\frac{1}{16}$
    &   $1 - D = -2$
\\ \hline
    $V_3$       &   $4$     &   $\frac{1}{1 - t^4}$
    &   $\frac{1}{4}$       &   $\frac{3}{8}$       &   $\frac{5}{16}$      &   $\frac{5}{32}$
    &   $-D = -4$
\\ \hline
    $V_4$       &   $5$     &   $\frac{1}{(1 - t^2)(1 - t^3)}$
    &   $\frac{1}{6}$       &   $\frac{1}{4}$       &   $\frac{17}{72}$     &   $\frac{25}{144}$
    &   $-D = -5$
\\ \hline
    $V_5$       &   $6$     &   $\frac{t^{18} + 1}
                                    {(1 - t^4) (1 - t^8) (1 - t^{12})}$
    &   $\frac{1}{192}$     &   $\frac{1}{128}$     &   $\frac{199}{1152}$  &   $\frac{965}{2304}$
    &   $-D = -6$
\\ \hline
    $V_6$       &   $7$     &   $\frac{t^{15} + 1}
                                    {(1 - t^2) (1 - t^4) (1 - t^6) (1 - t^{10})}$
    &   $\frac{1}{240}$     &   $\frac{1}{160}$     &   $\frac{71}{720}$    &   $\frac{17}{72}$
    &   $-D = -7$
\\ \hline
    $V_8$       &   $9$     &   $\frac{t^{18} + t^{10} + t^9 + t^8 + 1}
                                    {\prod_{m=2}^7 (1 - t^m) }$
    &   $\frac{1}{1008}$    &   $\frac{1}{672}$     &   $\frac{191}{15120}$ &   $\frac{11}{378}$
    &   $-D = -9$
\\ \hline
    $2V_1$      &   $4$     &   $\frac{1}{1 - t^2}$
    &   $\frac{1}{2}$       &   $\frac{1}{4}$       &   $\frac{1}{8}$       &   $\frac{1}{16}$
    &   $2-D = -2$
\\ \hline
    $2V_2$      &   $6$     &   $\frac{1}{(1 - t^2)^3}$
    &   $\frac{1}{8}$       &   $\frac{3}{16}$      &   $\frac{3}{16}$      &   $\frac{5}{32}$
    &   $-D = -6$
\\ \hline
    $2V_3$      &   $8$     &   $\frac{t^{10} + t^6 + t^4 + 1}{(1 - t^2) (1 - t^4)^4}$
    &   $\frac{1}{128}$     &   $\frac{3}{256}$     &   $\frac{23}{512}$    &   $\frac{95}{1024}$
    &   $-D = -8$
\\ \hline
    $2V_4$      &   $10$    &   $\frac{t^8 + t^4 + 1}{(1 - t^2)^3 (1 - t^3)^4}$
    &   $\frac{1}{216}$     &   $\frac{1}{144}$     &   $\frac{11}{432}$    &   $\frac{5}{96}$
    &   $-D = -10$
\\ \hline
    $V_1 + V_2$ &   $5$     &   $\frac{1}{(1 - t^2)(1 - t^3)}$
    &   $\frac{1}{6}$       &   $\frac{1}{4}$       &   $\frac{17}{72}$     &   $\frac{25}{144}$
    &   $-D = -5$
\\ \hline
    $V_1 + V_3$ &   $6$     &   $\frac{t^6 + 1}{(1 - t^4)^3}$
    &   $\frac{1}{32}$      &   $\frac{3}{64}$      &   $\frac{9}{64}$      &   $\frac{35}{128}$
    &   $-D = -6$
\\ \hline
    $V_1+V_4$   &   $7$     &   $\frac{t^9 + 1}{(1 - t^2) (1 - t^3) (1 - t^5) (1 - t^6)}$
    &   $\frac{1}{90}$      &   $\frac{1}{60}$      &   $\frac{109}{1080}$  &   $\frac{97}{432}$
    &   $-D = -7$
\\ \hline
    $V_2+V_3$   &   $7$     &   $\frac{t^7 + 1}{(1 - t^2) (1 - t^3) (1 - t^4) (1 - t^5)}$
    &   $\frac{1}{60}$      &   $\frac{1}{40}$      &   $\frac{71}{720}$    &   $\frac{59}{288}$
    &   $-D = -7$
\\ \hline
    $V_2+V_4$   &   $8$     &   $\frac{t^6 + 1}{(1 - t^2)^2 (1 - t^3)^2 (1 - t^4)}$
    &   $\frac{1}{72}$      &   $\frac{1}{48}$      &   $\frac{29}{432}$    &   $\frac{115}{864}$
    &   $-D = -8$
\\ \hline
\end{tabular} \vspace{.3cm}
\caption{The Hilbert series of the representations $V$ that occur as exceptions to part of
Theorem~\ref{thrm:Main}.}
\end{table}


\bibliographystyle{amsplain}
\bibliography{CHHS-SL2}

\end{document}